\definecolor{myblue}{rgb}{.8, .8, 1}
\newcommand{\fenv}[1]%
{\ensuremath{\,\overrightarrow{\operatorname{env}}_{#1}}}
\newcommand{\benv}[1]%
{\ensuremath{\,\overleftarrow{\operatorname{env}}_{#1}}}
\newcommand{\emp}{\ensuremath{\varnothing}}
\newcommand{\scal}[2]{\left\langle{#1},{#2}  \right\rangle}
\newcommand{\RR}{\ensuremath{\mathbb R}}
\newcommand{\RP}{\ensuremath{\left[0,+\infty\right[}}
\newcommand{\RPX}{\ensuremath{\left[0,+\infty\right]}}
\newcommand{\RX}{\ensuremath{\,\left]-\infty,+\infty\right]}}
\newcommand{\RRX}{\ensuremath{\,\left[-\infty,+\infty\right]}}
\newcommand{\NN}{\ensuremath{\mathbb N}}
\newcommand{\Hess}{\ensuremath{\nabla^2\!}}
\newcommand{\oldIDD}{\ensuremath{\operatorname{int}\operatorname{dom}\phi}}
\newcommand{\IDD}{\ensuremath{U}}
\newcommand{\dom}{\ensuremath{\operatorname{dom}}}
\newcommand{\argmin}{\ensuremath{\operatorname*{argmin}}}
\newcommand{\epi}{\ensuremath{\xrightarrow{\operatorname{e}}}}
\newcommand{\menge}[2]{\big\{{#1} \mid {#2}\big\}}
\newcommand{\reli}{\ensuremath{\operatorname{ri}}}
\newcommand{\inte}{\ensuremath{\operatorname{int}}}
\newcommand{\closu}{\ensuremath{\operatorname{cl}}}
\newcommand{\ran}{\ensuremath{\operatorname{ran}}}
\newcommand{\conv}{\ensuremath{\operatorname{conv}}}
\newcommand{\Id}{\ensuremath{\operatorname{Id}}}
\newcommand{\bl}{\ensuremath{\overline{\lambda}}}
\newcommand{\tl}{\ensuremath{\tilde{\lambda}}}
\newcommand{\nphi}{\ensuremath{\nabla{\phi}}}
\newcommand{\GF}{\ensuremath{\Gamma_{0}(\RR^n)}}
\newcommand{\GFO}{\ensuremath{\Gamma_{0}(\RR)}}
\newcommand{\bproj}[1]{\overleftarrow{\thinspace P\thinspace}_%
{\negthinspace\negthinspace #1}}
\newcommand{\bD}[1]{\overleftarrow{\thinspace D\thinspace}_%
{\negthinspace\negthinspace #1}}
\newcommand{\pinf}{\ensuremath{+\infty}}
\newcommand{\sphi}{\ensuremath{S_{\phi}}}
\newtheorem{theorem}{Theorem}[section]
\newtheorem{lemma}[theorem]{Lemma}
\newtheorem{corollary}[theorem]{Corollary}
\newtheorem{proposition}[theorem]{Proposition}
\newtheorem{definition}[theorem]{Definition}
\theoremstyle{plain}{\theorembodyfont{\rmfamily}
}
\theoremstyle{plain}{\theorembodyfont{\rmfamily}
}
\theoremstyle{plain}{\theorembodyfont{\rmfamily}
}
\theoremstyle{plain}{\theorembodyfont{\rmfamily}
\newtheorem{example}[theorem]{Example}}
\newtheorem{fact}[theorem]{Fact}
\theoremstyle{plain}{\theorembodyfont{\rmfamily}
\newtheorem{remark}[theorem]{Remark}}
\newcommand{\averagef}{\ensuremath{\mathcal{P}_{\lambda}^{\phi}(f_{1},f_{2},\alpha)}}
\newcommand{\averagefd}{\ensuremath{\mathcal{P}_{1/\lambda}^{\phi^*}(f_{1}^*,f_{2}^*,\alpha)}}
\newcommand{\lenv}[3]{\ensuremath{\,\overleftarrow{\operatorname{env}}_{#3}^{#1} #2}}
\newcommand{\lprox}[3]{\ensuremath{\,\overleftarrow{\operatorname{prox}}_{#3}^{#1} #2}}
\newcommand{\lhul}[3]{\ensuremath{\,\overleftarrow{\operatorname{hul}}_{#3}^{#1} #2}}
\newcommand{\aprox}[2]{\ensuremath{\,\operatorname{aprox}_{#2}^{#1}}}
\newcommand{\renv}[3]{\ensuremath{\,\overrightarrow{\operatorname{env}}_{#3}^{#1} #2}}
\newcommand{\rprox}[3]{\ensuremath{\,\overrightarrow{\operatorname{prox}}_{#3}^{#1} #2}}
\newcommand{\timess}{\,{\textstyle\mathsmaller{\text{\ding{75}}}}} 
\begin{document}

\title{\sffamily The Bregman proximal average}

\author{
Xianfu
Wang\thanks{Mathematics, Irving K.\ Barber School,
The University of British Columbia Okanagan, Kelowna,
B.C. V1V 1V7, Canada.
Email:
\texttt{shawn.wang@ubc.ca}.} and
Heinz H.\ Bauschke\thanks{Mathematics, Irving K.\ Barber School,
The University of British Columbia Okanagan, Kelowna,
B.C. V1V 1V7, Canada. Email:
\texttt{heinz.bauschke@ubc.ca}.}
}

\date{Submitted August 25, 2021
\\ Revision January 20, 2022} 

\maketitle

\vskip 8mm

\begin{abstract} \noindent We provide a proximal average with repect to a $1$-coercive Legendre
function. In the sense of Bregman distance, the Bregman envelope of
the proximal average is a convex combination of Bregman envelopes
of individual functions. The Bregman proximal mapping of the average is a
convex combination of convexified proximal mappings of individual functions.
Techniques from variational analysis provide the keys for the Bregman proximal average.
\end{abstract}

{\small
\noindent
{\bfseries 2000 Mathematics Subject Classification:}
Primary 49J53, 49J52;
Secondary 47H05, 90C26, 52A01.

\noindent {\bfseries Keywords:}
Bregman distance, Bregman envelope,
Bregman proximal mapping, Bregman proximal average,
Combettes-Reyes anisotropic envelope, Combettes-Reyes proximal mapping,
epi-convergence, Legendre function, $\phi$-prox-bounded function.
}

\section{Introduction}

Starting from the Bauschke, Matou\u{s}kov\'a and Reich \cite{matou}, proximal averages have been
further studied in \cite{bglw,jwp, bauschkel}, and found many applications and generalizations; see, e.g.,
\cite{yu,reid,hare, bw09, planiden, bartz, goebel, kum, Simons}. Bregman proximal mappings play important roles
in the theory of optimization, best approximation, and
the design of optimization
algorithms; see, e.g., \cite{Baus97,ButIus, censor,
bwyy,Sico03,noll,scott,laude,chenkansong,kansong,Butn97,marc}.
An open problem in the literature is to extend the proximal average to the framework of Bregman distances.
 In this paper, we propose a Bregman
proximal average, which unifies and significantly broadens the realm of proximal averages.
It generalizes
the classical proximal average from two perspectives: First the individual functions are not necessarily
convex; second, the proximal mappings are considerably more general.
It is surprising
that the Bregman proximal average has many desirable properties in this generality.
Our main results state that a convex combination of convexified Bregman proximal mappings
is a Bregman proximal mapping, and
that a convex combination of Bregman envelopes is a Bregman envelope.
This extends \cite{bglw,jwp,matou,More65} to the framework of Bregman
distances. Potential algorithmic consequences can be drawn from \cite{Sico03, noll, marc,laude}.

\textbf{Outline of the paper.} The paper is organized as follows. In the remainder of this section we
make  our setting precise. In Section~\ref{s:geodesicscurve}, we collect
a few basic facts and preliminary results on $\phi$-prox-bounded functions,
the Bregman envelopes and proximal maps for possible nonconvex functions,
$\phi$-proximal-hulls, and Combettes-Reyes anisotropic envelopes and proximal mappings.
In Section~\ref{s:clarkemor}, we propose an $\alpha$-weighted Bregman proximal average
with parameter $\mu$ (Bregman proximal average for short)
for $\phi$-prox-bounded proper
lower semicontinuous functions,
and provide its key properties. One important consequence is that a convex combination
of convexified Bregman proximal mappings is a Bregman proximal mapping.
For a general Legendre function $\phi$, even when both functions
are proper lower semicontinuous and convex,
their Bregman proximal average
need not be convex.
Section~\ref{s:whenconvex}
 gives conditions under which the Bregman proximal average is convex.
 To accomplish this we provide
 a Bregman
 version of the Baillon-Haddad theorem and introduce $\nabla\phi$-firmly nonexpansive mappings.
In Section~\ref{s:duality}, we study Fenchel duality properties
of Bregman proximal averages by using
Combettes and Reyes' anisotropic envelopes and  proximity operators. Section~\ref{s:epiconv}
 focuses on the relationships
among arithmetic average, epi-average, and the Bregman proximal average.
It is shown that the proximal hulls of individual functions are the epi-limiting
instances of the Bregman proximal average when $\alpha\downarrow 0$ or $\alpha \uparrow 1$.
It is also shown that
the arithmetic average and epi-average of convexified individual functions
are the limiting instances of the Bregman proximal average
for functions with $+\infty$-prox-bound when $\lambda\downarrow 0$ or $\lambda\uparrow +\infty$.

\textbf{Notation and standing assumptions.}
The notation that we employ is for the most part standard and can be found, for example, in \cite{BC,Rock98,
lewis, urruty1, mordukhovich}; however, a partial
list is provided for the reader's convenience.
Throughout, $\RR^n$ is the standard Euclidean space with inner
product $\scal{\cdot}{\cdot}$ and induced norm $\|\cdot\|$.
The set of proper lower semicontinuous convex functions from
$\RR^n$ to $]-\infty, +\infty]$ is denoted by $\Gamma_{0}(\RR^n)$.
For a set $C\subseteq\RR^n$, its closure, convex hull, closed convex hull, interior and relative interior
are denoted by $\closu{C}$, $\conv C$, $\closu\conv C$, $\inte{C}$ and $\reli{C}$, respectively.
The indicator function of $C$ is $\iota_{C}:\RR^n\rightarrow \RX$ given by $\iota_{C}(x)=0$ if $x\in C$, and
$+\infty$ if $x\not\in C$.
For a function $f:\RR^n\rightarrow\RRX$, its lower semicontinuous hull, convex hull, and closed convex hull
are denoted by $\closu{f}$, $\conv f$ and $\closu\conv f$, respectively. The effective domain of $f$ is
$\dom f:=\menge{x\in\RR^n}{f(x)<-\infty}$. The Fenchel conjugate of $f$ is
$f^{*}(y)=\sup_{x\in\RR^n}(\scal{y}{x}-f(x))$ for every $y\in\RR^n$.
The epi-multiplication of $f$ by $\lambda\in \RP$ is defined by
\begin{equation} \label{e:epimul}
\lambda\timess f := \begin{cases}
\lambda f(\cdot/\lambda), & \text{if $\lambda > 0$;}\\
\iota_{\{0\}}, &\text{if $\lambda =0$.}
\end{cases}
\end{equation}
\begin{definition}
Let
\text{$\phi \in\GF$ be differentiable on $U :=
\oldIDD \neq \emp$.}
The \emph{Bregman distance} associated with $\phi$ is defined by
\begin{equation} \label{eq:D}
D_{\phi}\colon \RR^n \times \RR^n \to \RPX \colon (x,y) \mapsto
\begin{cases}
\phi(x)-\phi(y)-\scal{\nabla \phi(y)}{x-y}, &\text{if}\;\;y\in\IDD;\\
\pinf, & \text{otherwise}.
\end{cases}
\end{equation}
\end{definition}

In this paper, our standing assumptions on $\phi$ are:
\begin{itemize}
\item[\bfseries A1]
$\phi\in\GF$ is of Legendre type,
i.e., $\phi$ is essentially smooth and essentially strictly convex in the
sense of \cite[Section~26]{Rock70}.
\item[\bfseries A2]
$\phi$ is $1$-coercive, i.e., $\displaystyle \lim_{\|x\|\rightarrow
+\infty}\phi(x)/\|x\|=+\infty$. An equivalent requirement is $\dom \phi^*=\RR^n$
(see, e.g., \cite[Theorem~11.8(d)]{Rock98}).
\end{itemize}
Let $f:\RR^n\rightarrow\RX$ be proper and lower semicontinuous. We shall need
two types of envelopes and proximal mappings of $f$: Bregman envelopes and proximal mappings
\cite{kansong,scott}, and
Combettes-Reyes anisotropic envelopes and proximal mappings \cite{com13}.
\begin{definition}
For $\lambda\in ]0,+\infty[$,
the \emph{left Bregman envelope function} to $f$ is defined
by
\begin{equation}
\lenv{\phi}{f}{\lambda}: \RR^n \to \RRX \colon y\mapsto
\inf_{x\in\RR^n}\left(f(x)+\frac{1}{\lambda}D_{\phi}(x,y)\right),
\end{equation}
and the \emph{left Bregman proximal map} of $f$ is
\begin{equation}
\lprox{\phi}{f}{\lambda}\colon \IDD \rightrightarrows  \IDD \colon y\mapsto
\argmin_{x\in\RR^n}\;\: \left(f(x)+\frac{1}{\lambda}D_{\phi}(x,y)\right).
\end{equation}
\end{definition}
The \emph{right Bregman envelope} and \emph{right Bregman proximal mapping} of $f$
are defined analogously and denoted by
$\renv{\phi}{f}{\lambda}$ and $\rprox{\phi}{f}{\lambda}$, respectively.

\begin{definition}
The \emph{Combettes-Reyes anisotropic envelope} of $f$  is defined by
\begin{equation}\label{d:aniso}
f\square\phi: \RR^n \rightarrow\RRX:
x\mapsto \inf_{y\in\RR^n}(f(y)+\phi(x-y)),
\end{equation}
and the \emph{Combettes-Reyes anisotropic proximal map} of $f$ is
$$\aprox{\phi}{f}: \RR^n  \rightrightarrows \RR^n:
x\mapsto \argmin_{y\in\RR^n}(f(y)+\phi(x-y)).$$
\end{definition}
When $\phi(x)=(1/2)\|x\|^2$, $D_{\phi}(x,y)=(1/2)
\|x-y\|^2$, both types of envelopes reduce to the classical Moreau envelope
\cite{Rock98}.
For a general $\phi$, even if $f\in \GF$, the Bregman envelope $\lenv{\phi}{f}{\lambda}$ might not be convex,
although the anisotropic envelope $f\square\phi$ is always convex.

\begin{example}\label{e:nonconvex}
 Let $\lambda :=1$, $f :=\iota_{\{1\}}$ on $\RR$.
\begin{enumerate}
\item\label{i:cubic:env} For $\phi(x)=|x|^3$, we have
$ (\forall y>0)\ \lenv{\phi}{f}{1}(y)=1/3+2y^3/3-y^2$,
which is not convex on $(0,+\infty)$.
\item\label{i:burg:env} For $\phi(x)=-\ln x+x^2/2$ if $x>0$ and $+\infty$ otherwise, we have
$(\forall y>0)\ \lenv{\phi}{f}{1}(y)=\ln y+1/y+(1-y)^2/2-1,$
which is not convex.
\end{enumerate}
\end{example}

\section{Auxiliary results on envelopes and proximal mappings}\label{s:geodesicscurve}
In this section,
we will collect some key facts and preliminary results of Bregman
envelopes and proximal mappings, as well as Combettes-Reyes anisotropic envelope and proximal mappings.
 Throughout this section,
$f:\RR^n\rightarrow\RX$ is proper lower semicontinuous and satisfies
$\dom f\cap \dom \phi\neq \varnothing.$

\subsection{$\phi$-prox-boundedness}
\begin{definition}
A function $f:\RR^n\rightarrow\RX$ is $\phi$-prox-bounded (prox-bounded for short)
if there exists $\lambda>0$
such that $\lenv{\phi}{f}{\lambda}(x)>-\infty$ for some $x\in\RR^n$.
The supremum of all
such $\lambda$ is the threshold $\lambda_{f}$ of the prox-boundedness.
\end{definition}
Prox-boundedness is crucial to ensure pleasant properties for both the
Bregman envelope and proximal mapping.
\begin{fact}\label{f:song}
Let $f:\RR^n\rightarrow\RX$ be proper lower semicontinuos with
prox-bound $\lambda_{f}>0$, and let $0<\lambda<\lambda_{f}$. Then
\begin{enumerate}
\item\label{i:f} $\lenv{\phi}{f}{\lambda}$ is proper lower semicontinuous on $\RR^n$, and continuous on $\IDD$.
\item\label{i:p} 
$\lprox{\phi}{f}{\lambda}$ is nonempty compact valued and upper semicontinuous
on $\IDD$.

\end{enumerate}
\end{fact}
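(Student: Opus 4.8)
The route I would take is to read both right–hand sides as one parametrized minimization problem (see also \cite{kansong,scott}) and feed it into the parametric–minimization theorem \cite[Theorem~1.17]{Rock98}. Put $g(x,y):=f(x)+\tfrac1\lambda D_\phi(x,y)$, so that $\lenv{\phi}{f}{\lambda}(y)=\inf_x g(x,y)$ and $\lprox{\phi}{f}{\lambda}(y)=\argmin_x g(x,y)$. The hypotheses to verify are: (a) $g$ is proper; (b) $g$ is lower semicontinuous on $\RR^n\times U$; and (c) $g(\cdot,y)$ is level-bounded in $x$ locally uniformly in $y$ on $U$. Granting (a)--(c) and applying \cite[Theorem~1.17]{Rock98} locally around each point of $U$ gives that $\lenv{\phi}{f}{\lambda}$ is lower semicontinuous and finite-valued on $U$, that $\lprox{\phi}{f}{\lambda}(y)$ is nonempty and compact for $y\in U$, and that $\lprox{\phi}{f}{\lambda}$ is outer semicontinuous and locally bounded on $U$, hence compact-valued and upper semicontinuous there. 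Lower semicontinuity is then upgraded to continuity on $U$ by the cheap upper estimate $\lenv{\phi}{f}{\lambda}(y)\le f(x_0)+\tfrac1\lambda D_\phi(x_0,y)$ for a fixed $x_0\in\dom f\cap\dom\phi$, whose right-hand side is continuous in $y$ on $U$; and one passes from ``lower semicontinuous on $U$'' to ``lower semicontinuous on $\RR^n$'' by noting that $\lenv{\phi}{f}{\lambda}\equiv+\infty$ off $U$, together with the boundary behaviour discussed below.

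Checking (a): $\dom f\cap\dom\phi\neq\varnothing$ and $\dom D_\phi(\cdot,y)=\dom\phi$ for $y\in U$, so $g$ is proper, with $\lenv{\phi}{f}{\lambda}\equiv+\infty$ on $\RR^n\setminus U$ and $\lenv{\phi}{f}{\lambda}<+\infty$ on $U$; that $\lenv{\phi}{f}{\lambda}>-\infty$ will come out of (c). Checking (b): $f$ is lower semicontinuous, and since $U$ is open with $\phi$ and $\nabla\phi$ continuous on $U$, the map $(x,y)\mapsto D_\phi(x,y)=\phi(x)-\phi(y)-\langle\nabla\phi(y),x-y\rangle$ is lower semicontinuous on $\RR^n\times U$, being $\phi(x)$ (lower semicontinuous in $x$) plus a jointly continuous term; hence $g$ is lower semicontinuous on $\RR^n\times U$. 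Checking (c): fix a compact $K\subset U$; since $\lambda<\lambda_f$, choose $\mu\in(\lambda,\lambda_f)$ and, by $\phi$-prox-boundedness together with $\dom f\cap\dom\phi\neq\varnothing$, a point $\bar y\in U$ and $c\in\RR$ with $f+\tfrac1\mu D_\phi(\cdot,\bar y)\ge c$ on $\RR^n$. Substituting this lower bound for $f$ in $g$ and expanding both Bregman distances, the terms free of $x$ collect into a quantity $C(y)$ and
$$g(x,y)\ \ge\ \Bigl(\tfrac1\lambda-\tfrac1\mu\Bigr)\phi(x)+\bigl\langle\tfrac1\mu\nabla\phi(\bar y)-\tfrac1\lambda\nabla\phi(y),\,x\bigr\rangle+C(y),$$
where $C(\cdot)$ and $\|\nabla\phi(\cdot)\|$ are bounded on $K$ because $\phi$ and $\nabla\phi$ are continuous on $U$. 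As $\tfrac1\lambda-\tfrac1\mu>0$ and $\phi$ is $1$-coercive, the right-hand side tends to $+\infty$ as $\|x\|\to+\infty$, uniformly for $y\in K$; this is precisely the required level-boundedness, and the choice $K=\{y\}$ also gives $\lenv{\phi}{f}{\lambda}(y)\in\RR$ for every $y\in U$, completing (a).

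Two points remain. First, the argmin really lands in $U$ and not merely in $\dom\phi$: this is a consequence of essential smoothness of $\phi$, which forces $\partial\phi=\varnothing$ on $\dom\phi\setminus U$ and thereby excludes a minimizer of $g(\cdot,y)$ on $\bd\dom\phi$, by the now-standard argument for Bregman proximal points. Second, and this is where I expect the real work to be, one must show $\lenv{\phi}{f}{\lambda}(y)\to+\infty$ as $y$ approaches $\bd\dom\phi$, so that the $+\infty$-extension of $\lenv{\phi}{f}{\lambda}$ across $\bd\dom\phi$ is lower semicontinuous. The natural mechanism is that for each fixed $x\in U$ the map $y\mapsto D_\phi(x,y)=\phi(x)+\phi^*(\nabla\phi(y))-\langle\nabla\phi(y),x\rangle$ is coercive along $y\to\bd\dom\phi$, since $\|\nabla\phi(y)\|\to+\infty$ there (essential smoothness) while $w\mapsto\phi^*(w)-\langle w,x\rangle$ is coercive precisely for $x\in U$ -- its conjugate being $\phi(\cdot+x)$, which has $0$ in the interior of its domain exactly when $x\in U$. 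Turning this pointwise coercivity into the needed estimate along the minimizing points is the delicate step, and it is exactly there, rather than in the bookkeeping, that the Legendre and $1$-coercivity assumptions on $\phi$ are genuinely used.
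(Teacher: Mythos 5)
The paper offers no proof of this Fact at all---it is imported verbatim from \cite[Theorem~2.2, Corollary~2.2]{kansong} and \cite{chenkansong}---so any written argument is necessarily ``a different route''; yours is the standard one behind those references, namely the parametric minimization theorem \cite[Theorem~1.17]{Rock98} applied to $g(x,y)=f(x)+\tfrac{1}{\lambda}D_{\phi}(x,y)$. The part of your argument that lives on $U$ is essentially correct: properness, joint lower semicontinuity of $g$ on $\RR^n\times U$, and the locally uniform level-boundedness obtained by replacing $f$ with the minorant $c-\tfrac{1}{\mu}D_{\phi}(\cdot,\bar y)$ for some $\mu\in\,]\lambda,\lambda_{f}[$ all check out, and they deliver finiteness and lower semicontinuity of $\lenv{\phi}{f}{\lambda}$ on $U$ together with nonemptiness, compactness, local boundedness and outer semicontinuity of $\lprox{\phi}{f}{\lambda}$ there, hence upper semicontinuity in the classical sense. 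One local repair is needed: a lower semicontinuous function majorized by a single continuous function need not be upper semicontinuous (take $p\equiv 0$ except $p(0)=-1$), so your ``cheap upper estimate'' with one fixed $x_{0}$ does not upgrade lower semicontinuity to continuity; instead observe that on $U$ the envelope is the pointwise infimum, over $x\in\dom f\cap\dom\phi$, of the functions $y\mapsto f(x)+\tfrac{1}{\lambda}D_{\phi}(x,y)$, each continuous on $U$, and an infimum of continuous functions is upper semicontinuous.

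The genuine gap is the one you flag yourself, and it cannot be closed the way you propose: the envelope does \emph{not} in general tend to $+\infty$ as $y$ approaches $\bd\dom\phi$. Take $n=1$, $\phi(x)=x\ln x-x$ on $\RP$ (Legendre, $1$-coercive, $U=\,]0,+\infty[\,$) and $f=\iota_{\{0\}}$, which is proper, lower semicontinuous, satisfies $\dom f\cap\dom\phi\neq\varnothing$, and is prox-bounded with $\lambda_{f}=+\infty$. Then $D_{\phi}(0,y)=y$, so $\lenv{\phi}{f}{\lambda}(y)=y/\lambda\to 0$ as $y\downarrow 0$, whereas $\lenv{\phi}{f}{\lambda}(0)=+\infty$ by the convention in \eqref{eq:D}; the envelope is not lower semicontinuous at $0$. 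Your coercivity mechanism for $y\mapsto D_{\phi}(x,y)$ is valid for each fixed $x\in U$, but the infimum defining the envelope may be carried by points of $\dom f\cap\bd\dom\phi$, where it fails. (The same example shows $\lprox{\phi}{f}{\lambda}(y)=\{0\}\not\subseteq U$, so your side claim that the argmin ``really lands in $U$'' is also false here; the sum-rule argument via $\partial\phi=\varnothing$ off $U$ needs $f$ convex or Clarke regular together with $\dom f\cap U\neq\varnothing$.) Thus assertion (i), read literally on all of $\RR^n$, requires hypotheses beyond {\bfseries A1}--{\bfseries A2} (for instance $\dom f\subseteq U$, or reading it as lower semicontinuity relative to $U$); this is an issue inherited from the cited sources rather than a defect your argument could have repaired, but as written your proof establishes the Fact only on $U$.
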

\begin{proof} \ref{i:f}\&\ref{i:p}: See \cite[Theorem 2.2, Corollary 2.2]{kansong},
\cite[Theorem 3.10, 3.16]{chenkansong}.
\end{proof}

The following result extends \cite[Theorem 2.5]{kansong}, in which Kan and Song proved
the result on $\dom f\cap \IDD$ when $\phi$
is strictly convex. As in \cite{borwein-vand}, an essentially strictly convex function need
not be strictly convex.

\begin{proposition} \label{i:c}
Let $f:\RR^n\rightarrow\RX$ be proper lower semicontinuos with
prox-bound $\lambda_{f}>0$, and let $0<\lambda<\lambda_{f}$. Then
$(\forall x\in U)\ \lim_{\lambda\downarrow 0}\lenv{\phi}{f}{\lambda}(x)=f(x).$
\end{proposition}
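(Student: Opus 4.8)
The plan is to establish the two inequalities $\limsup_{\lambda\downarrow 0}\lenv{\phi}{f}{\lambda}(x)\le f(x)$ and $\liminf_{\lambda\downarrow 0}\lenv{\phi}{f}{\lambda}(x)\ge f(x)$ for each fixed $x\in U$. The first is immediate: by definition of the envelope, for every $\lambda>0$ we have $\lenv{\phi}{f}{\lambda}(x)\le f(x)+\frac{1}{\lambda}D_{\phi}(x,x)=f(x)$, since $D_{\phi}(x,x)=0$; this bound holds whether or not $f(x)<+\infty$, so $\limsup_{\lambda\downarrow 0}\lenv{\phi}{f}{\lambda}(x)\le f(x)$. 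Also note the envelope is nondecreasing as $\lambda\downarrow 0$ (the coefficient $1/\lambda$ of the nonnegative term $D_{\phi}(x,y)$ increases), so in fact $\lim_{\lambda\downarrow 0}\lenv{\phi}{f}{\lambda}(x)$ exists in $\RRX$ and equals $\sup_{0<\lambda<\lambda_f}\lenv{\phi}{f}{\lambda}(x)$; it remains only to show this supremum is at least $f(x)$.

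For the lower bound, I would fix $0<\lambda<\lambda_f$ and, using Fact~\ref{f:song}\ref{i:p}, pick $x_\lambda\in\lprox{\phi}{f}{\lambda}(x)\subseteq U$, so that $\lenv{\phi}{f}{\lambda}(x)=f(x_\lambda)+\frac{1}{\lambda}D_{\phi}(x_\lambda,x)$. Along a sequence $\lambda_k\downarrow 0$, the values $f(x_{\lambda_k})+\frac{1}{\lambda_k}D_{\phi}(x_{\lambda_k},x)$ are bounded above by $f(x)$ and bounded below (since, shrinking $\lambda$ if needed and using prox-boundedness together with $1$-coercivity of $\phi$, the envelope is finite), so the nonnegative quantities $\frac{1}{\lambda_k}D_{\phi}(x_{\lambda_k},x)$ stay bounded; hence $D_{\phi}(x_{\lambda_k},x)\to 0$. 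The key step is then to deduce $x_{\lambda_k}\to x$ from $D_{\phi}(x_{\lambda_k},x)\to 0$: because $\phi$ is $1$-coercive and Legendre, $x\in U=\intdom\phi$, and $D_{\phi}(\cdot,x)$ has compact sublevel sets with unique minimizer $x$ (this is where essential strict convexity, not strict convexity, must be handled carefully — one uses that $\phi$ restricted to any segment through $x$ is strictly convex near $x$ by essential smoothness, or invokes the standard fact that $1$-coercive Legendre functions have $D_\phi(\cdot,x)$ level-bounded and that $D_\phi(z_k,x)\to 0$ forces $z_k\to x$). Once $x_{\lambda_k}\to x$, lower semicontinuity of $f$ gives $\liminf_k f(x_{\lambda_k})\ge f(x)$, and since $\frac{1}{\lambda_k}D_\phi(x_{\lambda_k},x)\ge 0$, we get $\liminf_k\lenv{\phi}{f}{\lambda_k}(x)\ge\liminf_k f(x_{\lambda_k})\ge f(x)$. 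As the monotone limit is sequence-independent, this yields $\lim_{\lambda\downarrow 0}\lenv{\phi}{f}{\lambda}(x)\ge f(x)$, completing the proof.

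The main obstacle is precisely the implication $D_\phi(z_k,x)\to 0\Rightarrow z_k\to x$ under the weaker hypothesis of essential strict convexity rather than strict convexity, which is exactly the gap the proposition claims to close relative to \cite[Theorem 2.5]{kansong}. I would handle it by first showing $(z_k)$ is bounded — if $\|z_k\|\to\infty$ along a subsequence, then $\phi(z_k)-\scal{\nabla\phi(x)}{z_k}\to+\infty$ by $1$-coercivity, forcing $D_\phi(z_k,x)\to+\infty$, a contradiction — and then showing every cluster point $\bar z$ of $(z_k)$ equals $x$: passing to the limit, $\phi(\bar z)-\phi(x)-\scal{\nabla\phi(x)}{\bar z-x}\le 0$ by lower semicontinuity of $\phi$ (the left side is $\ge 0$ by convexity with $x\in\intdom\phi$ where $\nabla\phi(x)$ is a genuine subgradient), so $\phi(\bar z)=\phi(x)+\scal{\nabla\phi(x)}{\bar z-x}$, i.e. the affine minorant is exact at $\bar z$; essential strict convexity of $\phi$ on $\dom\partial\phi$ (and the fact that $x\in\intdom\phi$) then forces $\bar z=x$. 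Combined with boundedness, this gives $z_k\to x$.
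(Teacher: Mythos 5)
There is a genuine gap: your argument only covers $x\in\dom f\cap U$. When $f(x)=+\infty$ (i.e.\ $x\in U\setminus\dom f$), the upper bound $\lenv{\phi}{f}{\lambda_k}(x)\le f(x)$ is vacuous, so you cannot conclude that the quantities $\tfrac{1}{\lambda_k}D_{\phi}(x_{\lambda_k},x)$ stay bounded, and the whole chain ($D_{\phi}(x_{\lambda_k},x)\to 0$, hence $x_{\lambda_k}\to x$, hence $\liminf_k f(x_{\lambda_k})\ge f(x)$) collapses. Yet in that case the assertion still requires proof, namely that $\lenv{\phi}{f}{\lambda_k}(x)\to+\infty$; this is exactly where the paper spends its effort (the case $x\in\dom f\cap U$ it simply delegates to Kan--Song). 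The fix is a dichotomy on the (bounded, hence subsequentially convergent) minimizers $x_{\lambda_k}\to\bar w$: if $\bar w\neq x$ then $D_{\phi}(\bar w,x)>0$ and lower semicontinuity of $D_\phi(\cdot,x)$ together with $1/\lambda_k\to\infty$ forces $\tfrac{1}{\lambda_k}D_{\phi}(x_{\lambda_k},x)\to+\infty$ while $\liminf_k f(x_{\lambda_k})\ge f(\bar w)>-\infty$; if $\bar w=x$ then $\liminf_k f(x_{\lambda_k})\ge f(x)=+\infty$ by lower semicontinuity. Either way the envelope blows up. You need to add this case.

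A secondary imprecision, even for $x\in\dom f\cap U$: from ``the sum $f(x_{\lambda_k})+\tfrac{1}{\lambda_k}D_{\phi}(x_{\lambda_k},x)$ is bounded above by $f(x)$ and bounded below'' you cannot infer that the nonnegative summand is bounded, because $f(x_{\lambda_k})$ could a priori tend to $-\infty$ while the Bregman term tends to $+\infty$. You need a lower bound on $f(x_{\lambda_k})$ alone, which you get either from the prox-bound inequality $f+\tfrac{1}{\tilde\lambda}\phi\ge c$ combined with $1$-coercivity (showing the minimizers are bounded, then invoking lower semicontinuity of $f$ on a bounded set), or again via the subsequential-limit dichotomy above. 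On the positive side, your treatment of the genuinely delicate point --- that $D_{\phi}(z,x)=0$ with $x\in U$ forces $z=x$ under essential strict convexity rather than strict convexity, via exactness of the affine minorant on a segment meeting $\inte\dom\phi$ --- is correct and is precisely what the paper invokes from Bauschke--Borwein (Theorem~3.7(iv) of that reference).
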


\begin{proof}
 In view of \cite[Theorem 3.7(iv)]{Baus97}, for $y\in\IDD$,
$D_{\phi}(x,y)=0\Leftrightarrow x=y$.  When $y
\in\dom f\cap \IDD$, the same arguments as in the proof of \cite[Theorem 2.5]{kansong}
shows that $\lim_{\lambda\downarrow 0}\lenv{\phi}{f}{\lambda}(x)=f(x)$.
When $y\in \IDD\setminus \dom f$, $f(y)=+\infty$, it suffices to show that for every
sequence $(\lambda_{k})_{k\in\NN}$ with $\lambda_{k}\downarrow 0$ we have
\begin{equation}\label{e:outside}
\lim_{k\rightarrow\infty}\lenv{\phi}{f}{\lambda_{k}}(y)=+\infty.
\end{equation}
 Indeed, following
the proof of \cite[Theorem 2.5]{kansong} we have a sequence
$(w_{k})_{k\in\NN}$ such that $w_{k}\rightarrow\bar{w}$ and
$f(w_{k})+\frac{1}{\lambda_{k}}D_{\phi}(w_{k},y)=\lenv{\phi}{f}{\lambda_{k}}(y).$
If $\bar{w}\neq y$, then $D_{\phi}(\bar{w},y)>0$ and
\begin{align}
\liminf_{k\rightarrow\infty} \lenv{\phi}{f}{\lambda_{k}}(y) &\geq
\liminf_{k\rightarrow\infty} f(w_{k})+\liminf_{k\rightarrow\infty}\frac{1}{\lambda_{k}}D_{\phi}(w_{k},y)\\
&\geq f(\bar{w})+D_{\phi}(\bar{w},y)/0^{+}=+\infty.
\end{align}
If $\bar{w}=y$, then
$\liminf_{k\rightarrow\infty}\lenv{\phi}{f}{\lambda_{k}}(y)\geq \liminf_{k\rightarrow\infty}f(w_{k})\geq
f(\bar{w})
=+\infty.$
Hence, \eqref{e:outside} holds.
\end{proof}

The threshold of prox-boundedness has the following useful characterization, which complements
\cite[Proposition 3.1]{laude}.
\begin{proposition}\label{p:bound} The following hold:
\begin{enumerate}
\item\label{i:boundl}
 If $f$ is prox-bounded with threshold $\lambda_{f}>0$, then
for every $\lambda\in ]0, \lambda_{f}[$ the function
$f+\frac{1}{\lambda}\phi$ is bounded below. Consequently, for every $\lambda\in ]0, \lambda_{f}[$
the function $f+\frac{1}{\lambda}\phi$ is $1$-coercive.
\item\label{i:proxl} If there exists $\ell>0$ such that for every $\lambda\in ]0,\ell[$
the function
$f+\frac{1}{\lambda}\phi$ is bounded below, then $\lambda_{f}\geq \ell$.
\item\label{i:same} Define
$\ell_{f} :=\sup\left\{\ell>0: (\forall \lambda\in ]0,\ell[)\
\inf\bigg( f+\frac{1}{\lambda}\phi\bigg)>-\infty\right\}.$
Then $\ell_{f}=\lambda_{f}.$
\end{enumerate}
\end{proposition}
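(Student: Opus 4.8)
The plan is to derive \ref{i:boundl} and \ref{i:proxl} from a single observation and then read off \ref{i:same}. The observation is that for any base point $y_{0}\in\IDD$ the Bregman distance decomposes as
\[
D_{\phi}(x,y_{0})=\phi(x)-a_{y_{0}}(x),\qquad\text{where}\quad a_{y_{0}}(x):=\phi(y_{0})+\scal{\nabla\phi(y_{0})}{x-y_{0}}\ \text{is affine in }x.
\]
Hence a lower bound for $f+\tfrac{1}{\lambda}\phi$ is the same, up to an affine perturbation, as a lower bound for $x\mapsto f(x)+\tfrac{1}{\lambda}D_{\phi}(x,y_{0})$, and conversely; and such an affine perturbation is harmless because, by \textbf{A2}, $\phi$ is $1$-coercive (equivalently $\dom\phi^{*}=\RR^{n}$), so any strictly positive multiple of $\phi$ dominates any affine function and is itself $1$-coercive.

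For \ref{i:boundl}, fix $\lambda\in\,]0,\lambda_{f}[$ and choose $\mu$ with $\lambda<\mu<\lambda_{f}$. By Fact~\ref{f:song}\ref{i:f} the envelope $\lenv{\phi}{f}{\mu}$ is proper, and since it equals $\pinf$ off $\IDD$ it is finite at some $y_{0}\in\IDD$; put $m:=\lenv{\phi}{f}{\mu}(y_{0})\in\RR$. From $m\le f(x)+\tfrac{1}{\mu}D_{\phi}(x,y_{0})$ for all $x$ together with the decomposition we get $f+\tfrac{1}{\mu}\phi\ge m+\tfrac{1}{\mu}a_{y_{0}}$. Splitting $\tfrac{1}{\lambda}\phi=\tfrac{1}{\mu}\phi+\bigl(\tfrac{1}{\lambda}-\tfrac{1}{\mu}\bigr)\phi$ with $\tfrac{1}{\lambda}-\tfrac{1}{\mu}>0$ then yields
\[
f(x)+\tfrac{1}{\lambda}\phi(x)\ \ge\ \Bigl(\tfrac{1}{\lambda}-\tfrac{1}{\mu}\Bigr)\phi(x)+\tfrac{1}{\mu}a_{y_{0}}(x)+m\qquad(x\in\RR^{n}).
\]
The right-hand side is a positive multiple of the $1$-coercive $\phi$ plus an affine term, hence is $1$-coercive and (being lower semicontinuous) bounded below; since $f+\tfrac{1}{\lambda}\phi$ dominates it, $f+\tfrac{1}{\lambda}\phi$ is likewise bounded below and $1$-coercive. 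The ``consequently'' is then immediate: once $f+\tfrac{1}{\mu}\phi$ is known to be bounded below for some $\mu\in\,]\lambda,\lambda_{f}[$, writing $f+\tfrac{1}{\lambda}\phi=(f+\tfrac{1}{\mu}\phi)+\bigl(\tfrac{1}{\lambda}-\tfrac{1}{\mu}\bigr)\phi$ already produces $1$-coercivity.

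For \ref{i:proxl}, let $\lambda\in\,]0,\ell[$ and pick $\mu\in\,]\lambda,\ell[$, so that $f+\tfrac{1}{\mu}\phi\ge c$ on $\RR^{n}$ for some $c\in\RR$. Fix any $y_{0}\in\IDD$ (possible since $\IDD\ne\emp$). Using $D_{\phi}(\cdot,y_{0})=\phi-a_{y_{0}}$ and the same splitting, for every $x$,
\[
f(x)+\tfrac{1}{\lambda}D_{\phi}(x,y_{0})\ \ge\ c+\Bigl(\tfrac{1}{\lambda}-\tfrac{1}{\mu}\Bigr)\phi(x)-\tfrac{1}{\lambda}\scal{\nabla\phi(y_{0})}{x}+c'
\]
for a constant $c'\in\RR$, and with $\varepsilon:=\tfrac{1}{\lambda}-\tfrac{1}{\mu}>0$ the Fenchel--Young inequality bounds $\varepsilon\phi(x)-\tfrac{1}{\lambda}\scal{\nabla\phi(y_{0})}{x}$ below by $-\varepsilon\,\phi^{*}\!\bigl(\tfrac{1}{\varepsilon\lambda}\nabla\phi(y_{0})\bigr)$, which is finite because $\dom\phi^{*}=\RR^{n}$. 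Thus $x\mapsto f(x)+\tfrac{1}{\lambda}D_{\phi}(x,y_{0})$ is bounded below, so $\lenv{\phi}{f}{\lambda}(y_{0})>-\infty$, whence $\lambda\le\lambda_{f}$; letting $\lambda\uparrow\ell$ gives $\lambda_{f}\ge\ell$. Part \ref{i:same} is then pure bookkeeping: by \ref{i:boundl} the interval $]0,\lambda_{f}[$ is admissible in the supremum defining $\ell_{f}$, so $\ell_{f}\ge\lambda_{f}$; by \ref{i:proxl} every admissible $\ell$ obeys $\ell\le\lambda_{f}$, so $\ell_{f}\le\lambda_{f}$ (both equal $0$ if $f$ is not prox-bounded).

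The only genuine obstacle is the linear term $-\tfrac{1}{\lambda}\scal{\nabla\phi(y_{0})}{x-y_{0}}$ that surfaces when $D_{\phi}(\cdot,y_{0})$ is expanded: the whole reason for sliding the parameter from $\lambda$ up to a nearby $\mu$ (still $<\lambda_{f}$, resp.\ $<\ell$) is to keep a strictly positive slice of $\phi$ in reserve to absorb it, and this is exactly the step where $1$-coercivity of $\phi$ is indispensable. A secondary point, easy to overlook, is that $y_{0}$ must be chosen inside $\IDD$, since otherwise $D_{\phi}(\cdot,y_{0})\equiv\pinf$ and the estimates are vacuous.
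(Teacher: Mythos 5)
Your proof is correct, and its skeleton (decompose $D_{\phi}(\cdot,y_{0})=\phi-a_{y_{0}}$ at a base point, translate between lower bounds on $f+\tfrac{1}{\lambda}\phi$ and finiteness of the envelope, then read off \ref{i:same} as bookkeeping) is the same as the paper's. The one real divergence is how the affine term is handled: what you call ``the only genuine obstacle'' --- the linear piece $\scal{\nabla\phi(y_{0})}{\cdot}$, which you absorb by sliding $\lambda$ up to a nearby $\mu$ and invoking Fenchel--Young together with $\dom\phi^{*}=\RR^{n}$ --- the paper eliminates at the source by taking $y_{0}:=\nabla\phi^{*}(0)$, the point of $U$ where $\nabla\phi$ vanishes (available precisely because $\phi$ is $1$-coercive and Legendre). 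With that choice $D_{\phi}(w,y_{0})=\phi(w)-\phi(y_{0})$ exactly, so both \ref{i:boundl} and \ref{i:proxl} become one-line equivalences with no parameter sliding; the sliding argument survives only where it is genuinely needed, namely to upgrade boundedness below to $1$-coercivity in \ref{i:boundl}. Your version costs a little more (an extra parameter and a Fenchel--Young estimate) but is more robust, since it works from an arbitrary $y_{0}\in U$ and, in \ref{i:boundl}, delivers boundedness and $1$-coercivity in a single stroke; both arguments lean on \textbf{A2} in an essential way, yours through finiteness of $\phi^{*}$ everywhere and the paper's through the existence of $\nabla\phi^{*}(0)\in U$. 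Your parenthetical treatment of the degenerate case in \ref{i:same} (when $f$ is not prox-bounded) is a small point the paper leaves implicit.
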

\begin{proof} We follow the proof idea of \cite[Proposition 3.5]{laude}.
Because $\phi$ is $1$-coercive and Legendre, we have $\nabla \phi^*(0)\in U$.

\ref{i:boundl}: For every $\lambda\in ]0,\lambda_{f}[$,
one has $\lenv{\phi}{f}{\lambda}(\nabla \phi^*(0))>-\infty$. This gives
$$(\forall w\in\RR^n)\ f(w)+\frac{1}{\lambda}\phi(w)\geq \frac{1}{\lambda}\phi(\nabla \phi^*(0))+\lenv{\phi}{f}{\lambda}(\nabla \phi^*(0)),$$
which implies $f+\frac{1}{\lambda}\phi$ is bounded below. Now every $\tilde{\lambda}\in ]0, \lambda_{f}[$ and take
$\lambda\in ]\tilde{\lambda},\lambda_{f}[$. Since $f+\frac{1}{\lambda}\phi$ is bounded below,
$1/\lambda<1/\tilde{\lambda}$, $\phi$ is $1$-coercive, and
$f+\frac{1}{\tilde{\lambda}}\phi=f+\frac{1}{\lambda}\phi+\big(\frac{1}{\tilde{\lambda}}-\frac{1}{\lambda}\bigg)\phi,
$
we conclude that $f+\frac{1}{\tilde{\lambda}}\phi$ is $1$-coercive.

\ref{i:proxl}: For every $\lambda\in ]0,\ell[$, we have
$\lenv{\phi}{f}{\lambda}(\nabla \phi^*(0))=
\inf_{w\in\RR^n}\bigg(f(w)+\frac{1}{\lambda}\phi(w)\bigg)-\frac{1}{\lambda}\phi(\nabla\phi^*(0))
>-\infty
$
by the assumption.  Hence $\lambda_{f}\geq \ell$.

\ref{i:same}: Combine \ref{i:boundl} and \ref{i:proxl}.
\end{proof}
\begin{corollary}\label{c:proxbound}
 If a function $f:\RR^n\rightarrow\RX$ is bounded below by a linear function,
then $\lambda_{f}=+\infty$. In particular, this holds when $f\in \GF$.
\end{corollary}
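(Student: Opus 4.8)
The statement to prove is Corollary~\ref{c:proxbound}: if $f$ is bounded below by a linear (affine) function, then $\lambda_f = +\infty$, and in particular this holds for $f \in \GF$.

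My plan is to reduce everything to Proposition~\ref{p:bound}\ref{i:proxl}, which says that if $f + \frac{1}{\lambda}\phi$ is bounded below for every $\lambda \in ]0,\ell[$, then $\lambda_f \geq \ell$. So it suffices to show that when $f(x) \geq \scal{a}{x} + b$ for all $x$ (some $a \in \RR^n$, $b \in \RR$), the function $f + \frac{1}{\lambda}\phi$ is bounded below for \emph{every} $\lambda > 0$; then $\ell$ can be taken arbitrarily large, forcing $\lambda_f = +\infty$.

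First I would fix $\lambda > 0$ and write, for every $x \in \RR^n$,
\[
f(x) + \frac{1}{\lambda}\phi(x) \geq \scal{a}{x} + b + \frac{1}{\lambda}\phi(x).
\]
Now the point is that $\frac{1}{\lambda}\phi$ is $1$-coercive (this is standing assumption A2, since $1$-coercivity is preserved under multiplication by the positive constant $1/\lambda$), so the affine perturbation $x \mapsto \scal{a}{x} + \frac{1}{\lambda}\phi(x)$ is still $1$-coercive and in $\GF$, hence bounded below — concretely, $\scal{a}{x} + \frac{1}{\lambda}\phi(x) = \frac{1}{\lambda}\phi(x) - \scal{-a}{x} \geq -\big(\frac{1}{\lambda}\phi\big)^*(-a) > -\infty$, the finiteness of the conjugate following from A2 ($\dom\phi^* = \RR^n$, hence $\dom(\frac{1}{\lambda}\phi)^* = \RR^n$). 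Therefore $f + \frac{1}{\lambda}\phi \geq b - \big(\frac{1}{\lambda}\phi\big)^*(-a) > -\infty$, i.e.\ it is bounded below. Since $\lambda > 0$ was arbitrary, Proposition~\ref{p:bound}\ref{i:proxl} applied with an arbitrarily large $\ell$ yields $\lambda_f = +\infty$.

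For the ``in particular'' clause: if $f \in \GF$, then $f$ is proper lower semicontinuous convex, so it has a continuous affine minorant — e.g.\ pick any $x_0 \in \reli(\dom f)$ and a subgradient $a \in \partial f(x_0)$, giving $f(x) \geq f(x_0) + \scal{a}{x - x_0}$ for all $x$; equivalently $f = f^{**} \geq \scal{a}{\cdot} - f^*(a)$ with $f^*(a) < +\infty$. Hence the hypothesis of the corollary is met and $\lambda_f = +\infty$. I don't anticipate a serious obstacle here — the only mild care needed is making sure the affine-minorant argument for $f + \frac{1}{\lambda}\phi$ being bounded below is phrased via $1$-coercivity or the conjugate rather than by an ad hoc estimate, and confirming that $1$-coercivity (equivalently $\dom\phi^* = \RR^n$) is stable under the operations used; both are immediate from A2 and the definition of the Fenchel conjugate.
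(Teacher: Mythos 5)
Your proposal is correct and follows essentially the same route as the paper: both reduce to Proposition~\ref{p:bound}\ref{i:proxl} by observing that the $1$-coercivity of $\phi$ (equivalently $\dom\phi^*=\RR^n$) makes $f+\tfrac{1}{\lambda}\phi$ bounded below for every $\lambda>0$ whenever $f$ admits an affine minorant, and both obtain the affine minorant for $f\in\GF$ from standard convex analysis (the paper cites Br{\o}ndsted--Rockafellar, you use a subgradient at a relative interior point — the same fact). Your write-up merely makes explicit the conjugate estimate $\tfrac{1}{\lambda}\phi(x)-\scal{-a}{x}\geq-\bigl(\tfrac{1}{\lambda}\phi\bigr)^*(-a)$ that the paper leaves implicit.
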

\begin{proof} This is because that $\phi$ is $1$-coercive.  When $f\in \GF$,
$f$ is bounded below by a linear functional by the Brondsted-Rockafellar
theorem, see, e.g., \cite[Theorem 16.58]{BC}.
\end{proof}

\subsection{Properties of the Bregman envelope and proximal mapping}

The following is a slightly refined version of \cite[Theorem 2.4]{kansong}.
\begin{fact}\label{f:env:conj}
Let $f:\RR^n\rightarrow\RX$ be proper lower semicontinuos with
prox-bound $\lambda_{f}>0$, and let $0<\lambda<\lambda_{f}$.
Then the following hold:
\begin{enumerate}
\item \label{i:env:def}
$\lenv{\phi}{f}{\lambda}=\left(\frac{\phi^*-(\lambda f+\phi)^*}{\lambda}\right)\circ
\nabla \phi,
\text{ and }$
\begin{equation}\label{e:env2}
(\lambda f+\phi)^*=\phi^*-\lambda \lenv{\phi}{f}{\lambda}
\circ\nabla\phi^*.
\end{equation}
\item\label{i:env:loc}
If $\nabla \phi$ is locally Lipschitz on $U$, then
$\lenv{\phi}{f}{\lambda}$ is
locally Lipschitz on $U$.
\end{enumerate}
\end{fact}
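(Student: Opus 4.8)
\textbf{Part (i).} The plan is to rewrite the defining infimum in \eqref{e:env2} as a Fenchel conjugate evaluated at a point determined by $\nabla\phi$. Fix $y\in U$. Using the definition of the Bregman distance \eqref{eq:D}, for $y\in U$ we have
\[
\lenv{\phi}{f}{\lambda}(y)
=\inf_{x}\Big(f(x)+\tfrac{1}{\lambda}\big(\phi(x)-\phi(y)-\scal{\nabla\phi(y)}{x-y}\big)\Big)
=\tfrac{1}{\lambda}\Big(-\phi(y)+\scal{\nabla\phi(y)}{y}+\inf_x\big(\lambda f(x)+\phi(x)-\scal{\nabla\phi(y)}{x}\big)\Big).
\]
The inner infimum is precisely $-(\lambda f+\phi)^*(\nabla\phi(y))$. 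Moreover, by the Legendre/$1$-coercive standing assumptions A1--A2, $\nabla\phi$ is a bijection from $U$ onto $\RR^n$ with inverse $\nabla\phi^*$, and the Young equality $\phi(y)=\scal{\nabla\phi(y)}{y}-\phi^*(\nabla\phi(y))$ holds for $y\in U$. Substituting, $-\phi(y)+\scal{\nabla\phi(y)}{y}=\phi^*(\nabla\phi(y))$, and therefore
\[
\lenv{\phi}{f}{\lambda}(y)=\tfrac{1}{\lambda}\Big(\phi^*(\nabla\phi(y))-(\lambda f+\phi)^*(\nabla\phi(y))\Big),
\]
which is the first claimed identity. For the second identity \eqref{e:env2}, set $u=\nabla\phi(y)$, so $y=\nabla\phi^*(u)$ ranges over all of $\RR^n$ as $u$ does; rearranging the displayed formula gives $(\lambda f+\phi)^*(u)=\phi^*(u)-\lambda\,\lenv{\phi}{f}{\lambda}(\nabla\phi^*(u))$, i.e., \eqref{e:env2}. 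One point needing a word of care: the displayed computation was carried out for $y\in U$, but both sides of \eqref{i:env:def} are functions on $\RR^n$; since $\nabla\phi$ and $\nabla\phi^*$ only take values in $U$ and $\RR^n$ respectively, the compositions are well-defined everywhere, and \eqref{e:env2} is an identity of finite convex functions on $\RR^n$ because $\dom\phi^*=\RR^n$ (A2) and $\lenv{\phi}{f}{\lambda}$ is proper (Fact~\ref{f:song}\ref{i:f}); here prox-boundedness with $\lambda<\lambda_f$ is what guarantees $(\lambda f+\phi)^*$ is finite-valued rather than identically $-\infty$, via Proposition~\ref{p:bound}\ref{i:boundl}.

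\textbf{Part (ii).} Assume $\nabla\phi$ is locally Lipschitz on $U$. The strategy is to read local Lipschitzness off the formula in (i): $\lenv{\phi}{f}{\lambda}=\lambda^{-1}\big(\phi^*-(\lambda f+\phi)^*\big)\circ\nabla\phi$. Both $\phi^*$ and $(\lambda f+\phi)^*$ are finite convex functions on all of $\RR^n$ (again using A2 and Proposition~\ref{p:bound}\ref{i:boundl}), hence each is locally Lipschitz on $\RR^n$ by the standard fact that a finite convex function on $\RR^n$ is locally Lipschitz (e.g.\ \cite[Theorem~10.4]{Rock70}). Therefore their difference $g:=\lambda^{-1}(\phi^*-(\lambda f+\phi)^*)$ is locally Lipschitz on $\RR^n$. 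Composing a locally Lipschitz function with the locally Lipschitz map $\nabla\phi\colon U\to\RR^n$ yields a locally Lipschitz function on $U$: given $y_0\in U$, pick a compact neighborhood $N\subseteq U$ of $y_0$ on which $\nabla\phi$ has Lipschitz constant $L$; then $\nabla\phi(N)$ is compact, so $g$ has some Lipschitz constant $M$ on a neighborhood of $\nabla\phi(N)$, and $g\circ\nabla\phi$ has constant $LM$ on $N$. Hence $\lenv{\phi}{f}{\lambda}=g\circ\nabla\phi$ is locally Lipschitz on $U$.

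\textbf{Main obstacle.} The only genuine subtlety is bookkeeping about where each object is finite and which variable ranges over what: one must invoke prox-boundedness (through Proposition~\ref{p:bound}\ref{i:boundl}, so that $\lambda f+\phi$ is bounded below and $(\lambda f+\phi)^*$ is a proper finite convex function) and the Legendre bijection $\nabla\phi\colon U\to\RR^n$ with $\dom\phi^*=\RR^n$, in order to justify that the compositions with $\nabla\phi$ and $\nabla\phi^*$ make sense as equalities of functions on all of $\RR^n$ (respectively $U$) and not merely pointwise on $U$. Once these domain issues are pinned down, the identities are just the Fenchel--Young inequality and an algebraic rearrangement, and the local Lipschitz claim is immediate from the local Lipschitzness of finite convex functions composed with a locally Lipschitz map.
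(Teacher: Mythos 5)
Your proof is correct and is essentially the computation the paper delegates to Kan--Song \cite[Theorem~2.4]{kansong}: the Fenchel--Young rearrangement of the infimum defining $\lenv{\phi}{f}{\lambda}$, the Legendre bijection $\nabla\phi\colon U\to\RR^n$ to pass to \eqref{e:env2}, and local Lipschitzness of the finite convex functions $\phi^*$ and $(\lambda f+\phi)^*$ composed with $\nabla\phi$ for part (ii). Your attention to where each object is finite (via Proposition~\ref{p:bound}\ref{i:boundl} and $1$-coercivity of $\lambda f+\phi$) is exactly the right bookkeeping, so nothing is missing.
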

\begin{proof}
\ref{i:env:def}: The calculation given in \cite[Theorem 2.4]{kansong}
applies to every function $f$.
\ref{i:env:loc}: This is given by \cite[Theorem 2.4]{kansong}.
\end{proof}
\begin{remark} When $\lambda=1$ and $f\in\GF$,  in \cite{com13} Combettes and Reyes used the
notation $f\diamond\phi$ for $\lenv{\phi}{f}{\lambda}$, and \cite[Theorem 1(i)]{com13} coincides
with \eqref{e:env2}.
\end{remark}

\begin{corollary}\label{c:breg:convex}
Let $f:\RR^n\rightarrow\RX$ be proper lower semicontinuos with
prox-bound $\lambda_{f}>0$, and let $0<\lambda<\lambda_{f}$.
If $\lambda f+\phi$ is convex, then
$\lambda f+\phi=(\phi^*-\lambda \lenv{\phi}{f}{\lambda}\circ\nabla\phi^*)^*.$
Consequently,
$$f=\frac{(\phi^*-\lambda \lenv{\phi}{f}{\lambda}
\circ\nabla\phi^*)^*-\phi}{\lambda} \text{ on $\dom \phi$.} $$
\end{corollary}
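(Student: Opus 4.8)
The plan is to derive the identity directly from the conjugate formula \eqref{e:env2} in Fact~\ref{f:env:conj}\ref{i:env:def} together with the assumed convexity of $\lambda f+\phi$. First I would observe that since $f$ is proper lower semicontinuous and $\phi\in\GF$, the function $\lambda f+\phi$ is proper and lower semicontinuous; adding the convexity hypothesis, $\lambda f+\phi\in\GF$ (properness holds because $\dom f\cap\dom\phi\neq\emp$). The crucial step is then the biconjugation theorem \cite[Theorem~13.37]{BC}: for any $g\in\GF$ one has $g=g^{**}$. Applying this with $g=\lambda f+\phi$ gives $\lambda f+\phi=((\lambda f+\phi)^*)^*$, and \eqref{e:env2} identifies $(\lambda f+\phi)^*$ with $\phi^*-\lambda\,\lenv{\phi}{f}{\lambda}\circ\nabla\phi^*$, yielding the first displayed equation
$$\lambda f+\phi=\bigl(\phi^*-\lambda\,\lenv{\phi}{f}{\lambda}\circ\nabla\phi^*\bigr)^*.$$

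For the ``consequently'' part, I would simply rearrange: on $\dom\phi$ the function $\phi$ is finite, so one may subtract it from both sides and divide by $\lambda>0$, obtaining
$$f=\frac{\bigl(\phi^*-\lambda\,\lenv{\phi}{f}{\lambda}\circ\nabla\phi^*\bigr)^*-\phi}{\lambda}\quad\text{on }\dom\phi.$$
The restriction to $\dom\phi$ is exactly what makes the subtraction $(\lambda f+\phi)-\phi$ unambiguous (outside $\dom\phi$ both $\phi$ and the left-hand side are $+\infty$, and the arithmetic $(+\infty)-(+\infty)$ is undefined), so no further argument is needed there.

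I do not anticipate a serious obstacle; the only points requiring a little care are the bookkeeping that ensures $\lambda f+\phi$ is a legitimate member of $\GF$ so that the biconjugation theorem applies (properness from the standing assumption $\dom f\cap\dom\phi\neq\emp$, lower semicontinuity from that of $f$ and $\phi$, convexity by hypothesis), and noting that Fact~\ref{f:env:conj} is invoked under its running hypothesis $0<\lambda<\lambda_f$, which is part of the corollary's hypotheses as well. The identity \eqref{e:env2} is stated there for arbitrary proper lower semicontinuous $f$ with positive prox-bound, so there is no circularity in using it here.
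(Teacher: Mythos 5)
Your proposal is correct and follows exactly the route the paper intends: the corollary is an immediate consequence of Fact~\ref{f:env:conj} (identity \eqref{e:env2}) combined with the Fenchel--Moreau biconjugation theorem applied to $\lambda f+\phi\in\GF$, with properness supplied by the section's standing assumption $\dom f\cap\dom\phi\neq\emp$. No gaps; the rearrangement on $\dom\phi$ is handled correctly.
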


Let $\hat{\partial}$, $\partial$, and $\partial_{C}$ denote the Fr\'echet subdifferential,
Mordukhovich limiting subdifferential, and Clarke subdifferential, respectively;
see, e.g., \cite{Rock98,mordukhovich,Clarke}.
While
$\hat{\partial}$, $\partial$ and $\partial_{C}$ are different in general,
it is well-known that they
coincide for proper lower
semicontinuous convex functions. The following fact by Kan and Song shows that
the Fr\'echet, limiting, and Clarke subdifferential coincide
for $-\lenv{\phi}{f}{\lambda}$ and they can be found
by using the convex hull of the Bregman proximal mapping
of $f$.

\begin{fact}\emph{\cite[Theorem 3.1]{kansong}}\label{f:derivative}
Let $f:\RR^n\rightarrow\RX$ be proper lower semicontinuos with
prox-bound $\lambda_{f}>0$, and let $0<\lambda<\lambda_{f}$.
Suppose $\phi$ is second-order continuously differentiable on $U$.
Then on $\IDD$ the function $-\lenv{\phi}{f}{\lambda}$
is Clarke regular, and satisfies
$$(\forall x\in\IDD)\ \hat{\partial}(-\lenv{\phi}{f}{\lambda})(x)=\partial_{C}(-\lenv{\phi}{f}{\lambda})(x)
=\frac{1}{\lambda}\nabla^{2}\phi(x)[\conv(\lprox{\phi}{f}{\lambda}(x))-x].$$
\end{fact}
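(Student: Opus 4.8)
The plan is to reduce the statement to the conjugacy formula of Fact~\ref{f:env:conj}\ref{i:env:def} together with a chain rule for the Clarke subdifferential. Write $e:=\lenv{\phi}{f}{\lambda}$. By Fact~\ref{f:env:conj}\ref{i:env:def},
\[
-e \;=\; \tfrac1\lambda\big((\lambda f+\phi)^{*}-\phi^{*}\big)\circ\nabla\phi \qquad\text{on }\IDD .
\]
Since $\nabla\phi(y)\in\partial\phi(y)$ for $y\in\IDD$, the Fenchel equality gives $\phi^{*}(\nabla\phi(y))=\scal{\nabla\phi(y)}{y}-\phi(y)$, which is of class $C^{1}$ on $\IDD$ (because $\phi$ is $C^{2}$ there) with gradient $\nabla^{2}\phi(y)\,y$. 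Hence the only nonsmooth ingredient is $G\circ\nabla\phi$ with $G:=(\lambda f+\phi)^{*}$, and by Proposition~\ref{p:bound}\ref{i:boundl} the function $\lambda f+\phi$ is proper, lower semicontinuous and $1$-coercive, so $G$ is real-valued, convex and continuous on $\RR^{n}$ — in particular Clarke regular everywhere.

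Next I would apply the Clarke chain rule \cite{Clarke}. The map $\nabla\phi$ is $C^{1}$ on $\IDD$ (its derivative being the continuous field $\nabla^{2}\phi$) and $G$ is convex, hence regular; therefore $G\circ\nabla\phi$ is Clarke regular on $\IDD$ with $\partial_{C}(G\circ\nabla\phi)(y)=\nabla^{2}\phi(y)\,\partial G(\nabla\phi(y))$. Adding the $C^{1}$ term $-\tfrac1\lambda\,\phi^{*}\!\circ\nabla\phi$ preserves regularity and adds the gradient $-\tfrac1\lambda\nabla^{2}\phi(y)\,y$, so $-e$ is Clarke regular on $\IDD$; in particular $\hat{\partial}(-e)=\partial(-e)=\partial_{C}(-e)$ on $\IDD$, and
\[
\partial_{C}(-e)(y)=\tfrac1\lambda\,\nabla^{2}\phi(y)\big[\partial G(\nabla\phi(y))-y\big].
\]

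It then remains to identify $\partial G(\nabla\phi(y))$ with $\conv\big(\lprox{\phi}{f}{\lambda}(y)\big)$. Put $q:=\conv(\lambda f+\phi)$; since $\lambda f+\phi$ is proper, l.s.c.\ and $1$-coercive, $q$ is proper, l.s.c., convex and $1$-coercive with $q^{*}=G$, so for the closed convex function $q$ one has $\partial G(z)=\partial q^{*}(z)=\argmax_{x}\big(\scal{x}{z}-q(x)\big)$ for every $z$. A Carathéodory argument — representing a point of this contact set as $\sum_{i}\mu_{i}x_{i}$ with $q\big(\sum_{i}\mu_{i}x_{i}\big)=\sum_{i}\mu_{i}(\lambda f+\phi)(x_{i})$, which is legitimate because the convexification of a $1$-coercive l.s.c.\ function is exact with the infimum attained \cite{Rock98}, and then observing that each $x_{i}$ with $\mu_{i}>0$ already maximizes $\scal{\cdot}{z}-(\lambda f+\phi)$ — shows that $\partial G(z)=\conv\,\argmin_{x}\big(\lambda f(x)+\phi(x)-\scal{x}{z}\big)$. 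Taking $z=\nabla\phi(y)$ and adding the constant $\scal{\nabla\phi(y)}{y}-\phi(y)$ (which does not change the minimizer set) turns the objective into $\lambda\big(f(x)+\tfrac1\lambda D_{\phi}(x,y)\big)$, so $\argmin_{x}\big(\lambda f(x)+\phi(x)-\scal{x}{\nabla\phi(y)}\big)=\lprox{\phi}{f}{\lambda}(y)$ (nonempty by Fact~\ref{f:song}\ref{i:p}), and substitution into the displayed formula yields the claim.

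The step I expect to be the real obstacle is precisely this last identification $\partial(\lambda f+\phi)^{*}(z)=\conv\,\argmin_{x}\big((\lambda f+\phi)(\cdot)-\scal{\cdot}{z}\big)$, i.e.\ that passing to the biconjugate replaces the (generally nonconvex) set of minimizers by its convex hull: one must use the exact-convexification/Carathéodory representation of the $1$-coercive l.s.c.\ function $\lambda f+\phi$ together with the fact that the support points of such a representation of a contact point are themselves contact points of the original function. A minor but genuine point to keep in mind is that $\nabla^{2}\phi(y)$ may be singular (as already in Example~\ref{e:nonconvex}\ref{i:cubic:env}), so the composition must be handled in the variable $y\in\IDD$ and one should not change coordinates through $\nabla\phi^{*}$, which need not be differentiable.
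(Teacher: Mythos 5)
The paper does not actually prove this statement: it is imported verbatim as a Fact from Kan and Song \cite[Theorem~3.1]{kansong}, so there is no internal argument to compare against. Your blind derivation is correct and self-contained modulo the other results recorded in Section~\ref{s:geodesicscurve}. The reduction --- write $-\lenv{\phi}{f}{\lambda}=\tfrac{1}{\lambda}\bigl((\lambda f+\phi)^{*}-\phi^{*}\bigr)\circ\nabla\phi$ via Fact~\ref{f:env:conj}, note that $\phi^{*}\circ\nabla\phi=\scal{\nabla\phi(\cdot)}{\cdot}-\phi$ is $C^{1}$ on $U$ with gradient $\nabla^{2}\phi(y)y$, and apply Clarke's chain rule to the finite convex (hence locally Lipschitz and regular) outer function $(\lambda f+\phi)^{*}$ composed with the $C^{1}$ map $\nabla\phi$ --- yields Clarke regularity of $-\lenv{\phi}{f}{\lambda}$ together with the formula $\tfrac{1}{\lambda}\nabla^{2}\phi(y)\bigl[\partial(\lambda f+\phi)^{*}(\nabla\phi(y))-y\bigr]$, using the symmetry of $\nabla^{2}\phi(y)$ for the adjoint. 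You correctly isolate the real content, namely $\partial(\lambda f+\phi)^{*}(z)=\conv\argmin\bigl(\lambda f+\phi-\scal{\cdot}{z}\bigr)$, and your Carath\'eodory/averaging argument for it is sound because $\lambda f+\phi$ is lower semicontinuous and $1$-coercive by Proposition~\ref{p:bound}\ref{i:boundl}, so its convex hull is exact with attained infimum, the tilted argmin set is nonempty and compact (whence its convex hull is closed), and every support point of a contact point of the biconjugate is itself a contact point of $\lambda f+\phi$; specializing $z=\nabla\phi(y)$ and adding the constant $\scal{\nabla\phi(y)}{y}-\phi(y)$ turns the objective into $\lambda\bigl(f+\tfrac{1}{\lambda}D_{\phi}(\cdot,y)\bigr)$, giving $\lprox{\phi}{f}{\lambda}(y)$. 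It is worth noting that this key identity is essentially Proposition~\ref{t:grad}, which the paper later deduces \emph{from} Fact~\ref{f:derivative} under the extra hypothesis that $\nabla^{2}\phi$ be invertible; your route establishes it directly and without that hypothesis. The only inaccuracy is the closing remark that $\nabla\phi^{*}$ \enquote{need not be differentiable}: under {\bfseries A1--A2} the conjugate $\phi^{*}$ is differentiable on all of $\RR^{n}$; what may fail is its \emph{second-order} differentiability where $\nabla^{2}\phi$ is singular, which is indeed the correct reason to work in the variable $y$ rather than invert $\nabla^{2}\phi(y)$.
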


%
%

The following result establishes the relationship between the Bregman proximal mapping
of $f$
and the limiting subdifferential of $f$.
\begin{proposition} \label{p:prox:e}
Let $f:\RR^n\rightarrow\RX$ be proper lower semicontinuos with
prox-bound $\lambda_{f}>0$, and let $0<\lambda<\lambda_{f}$.
Then the following hold:
\begin{enumerate}
\item\label{i:prox:in}
 $\lprox{\phi}{f}{\lambda}\subseteq [\partial(\phi+\lambda  f)]^{-1}\circ\nabla\phi.$
If
\begin{equation}\label{e:cq}
\partial^{\infty}f(y)\cap -N_{\dom\phi}(y)=\{0\} \text{ for every $y\in \dom\phi$,}
\end{equation}
 then
$\lprox{\phi}{f}{\lambda}\subseteq (\nabla \phi+\lambda\partial f)^{-1}\circ\nabla\phi.$
\item\label{i:convexf}
 If $\lambda f+\phi$ is convex, then $(\forall x\in\RR^n)\ \lprox{\phi}{f}{\lambda}(x)$ is convex and closed,
 and
$\lprox{\phi}{f}{\lambda}=[\partial(\phi+\lambda f)]^{-1}\circ\nabla\phi.$
If, in addition, \eqref{e:cq} holds and $f$ is Clarke regular, then
$\lprox{\phi}{f}{\lambda}=(\nabla \phi+\lambda\partial f)^{-1}\circ\nabla\phi.$
\item\label{i:both:conv}
If $f$ is convex, and $(\dom f)\cap U\neq\varnothing$, then
\begin{equation}\label{e:l:prox}
\lprox{\phi}{f}{\lambda}=(\nabla \phi+\lambda\partial f)^{-1}\circ\nabla\phi=
\bigg(\frac{1}{\lambda}\nabla\phi+\partial f\bigg)^{-1}\circ\bigg(\frac{1}{\lambda}\nabla\phi\bigg).
\end{equation}
Moreover, $\lprox{\phi}{f}{\lambda}$ is continuous on $U$.
\end{enumerate}
\end{proposition}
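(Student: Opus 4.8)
\textbf{Proof plan for Proposition~\ref{p:prox:e}.}

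The plan is to establish the three parts in order, leveraging the dual formula from Fact~\ref{f:env:conj}\ref{i:env:def} and the standard subdifferential calculus.

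\emph{Part \ref{i:prox:in}.} First I would observe that $\bar x\in\lprox{\phi}{f}{\lambda}(x)$ means $\bar x$ minimizes $w\mapsto f(w)+\frac{1}{\lambda}D_{\phi}(w,x) = f(w)+\frac{1}{\lambda}\phi(w)-\frac{1}{\lambda}\scal{\nabla\phi(x)}{w-x}$ over $\RR^n$; note $x\in U$ here. By Fermat's rule, $0\in\hat\partial\big(f+\frac{1}{\lambda}\phi-\frac{1}{\lambda}\scal{\nabla\phi(x)}{\cdot}\big)(\bar x)$. Since the last two terms are smooth near $\bar x$ (as $\phi$ is essentially smooth and, on $U$, differentiable), the sum rule for the Fréchet subdifferential gives $\frac{1}{\lambda}\nabla\phi(x)\in\hat\partial\big(f+\frac{1}{\lambda}\phi\big)(\bar x)\subseteq\partial\big(f+\frac{1}{\lambda}\phi\big)(\bar x)$, hence $\nabla\phi(x)\in\partial(\phi+\lambda f)(\bar x)$ after scaling, i.e., $\bar x\in[\partial(\phi+\lambda f)]^{-1}(\nabla\phi(x))$. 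Under the constraint qualification \eqref{e:cq}, the limiting sum rule \cite[Exercise~10.10 / Corollary~10.9]{Rock98} applies to $\phi+\lambda f$ at $\bar x$ (with $\phi$ finite and smooth on $U$, but possibly only lsc with domain $\dom\phi$), yielding $\partial(\phi+\lambda f)(\bar x)\subseteq\nabla\phi(\bar x)+\lambda\partial f(\bar x)$ whenever $\bar x\in\dom\phi$; this gives the refined inclusion $\lprox{\phi}{f}{\lambda}\subseteq(\nabla\phi+\lambda\partial f)^{-1}\circ\nabla\phi$.

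\emph{Part \ref{i:convexf}.} When $\lambda f+\phi$ is convex, the objective $w\mapsto(\lambda f+\phi)(w)-\scal{\nabla\phi(x)}{w}$ is convex, so its argmin set is convex and closed, and Fermat's rule becomes the equivalence $\bar x\in\lprox{\phi}{f}{\lambda}(x)\iff \nabla\phi(x)\in\partial(\phi+\lambda f)(\bar x)$; this upgrades the inclusion of Part~\ref{i:prox:in} to the stated equality $\lprox{\phi}{f}{\lambda}=[\partial(\phi+\lambda f)]^{-1}\circ\nabla\phi$. For the second equality, under \eqref{e:cq} the limiting sum rule again gives $\partial(\phi+\lambda f)=\nabla\phi+\lambda\partial f$ on $\dom\phi$ (the reverse inclusion $\nabla\phi+\lambda\partial f\subseteq\partial(\phi+\lambda f)$ being automatic for convex $\phi+\lambda f$ once one term is smooth); Clarke regularity of $f$ is what guarantees $\partial f$ here is the honest (regular) subdifferential so the sum rule holds with equality rather than inclusion.

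\emph{Part \ref{i:both:conv}.} If $f$ itself is convex and $(\dom f)\cap U\neq\emp$, then $\phi+\lambda f\in\GF$ and the Moreau--Rockafellar sum rule applies unconditionally: $\partial(\phi+\lambda f)=\nabla\phi+\lambda\partial f$ on $\dom\phi\cap\dom f$ (interiority of $\dom\phi$ relative to one operand suffices). This yields \eqref{e:l:prox}, the middle expression rewritten as $(\frac{1}{\lambda}\nabla\phi+\partial f)^{-1}\circ(\frac{1}{\lambda}\nabla\phi)$ by dividing through by $\lambda$. For continuity on $U$: since $0<\lambda<\lambda_f$, Fact~\ref{f:song}\ref{i:p} gives that $\lprox{\phi}{f}{\lambda}$ is nonempty, compact-valued and upper semicontinuous on $U$; but in the convex case each value is a singleton (the minimizer of a strictly convex problem—strict convexity of $D_\phi(\cdot,x)$ on $U$ via essential strict convexity of $\phi$, cf.\ the argument in the proof of Proposition~\ref{i:c} that $D_\phi(x,y)=0\iff x=y$), and a single-valued upper semicontinuous map with compact values is continuous.

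\emph{Main obstacle.} The delicate point is the passage from $\partial(\phi+\lambda f)$ to $\nabla\phi+\lambda\partial f$ in Parts~\ref{i:prox:in} and~\ref{i:convexf}: $\phi$ need not be finite everywhere, so one cannot simply invoke the smooth-plus-lsc sum rule globally; one must check the qualification condition $\partial^\infty\phi(\bar x)\cap(-\partial^\infty f(\bar x))=\{0\}$, and here $\partial^\infty\phi(\bar x)=N_{\dom\phi}(\bar x)$ since $\phi$ is convex, which is exactly why hypothesis \eqref{e:cq} is phrased as $\partial^\infty f(y)\cap -N_{\dom\phi}(y)=\{0\}$. Verifying that $\bar x\in\dom\phi$ for every $\bar x\in\lprox{\phi}{f}{\lambda}(x)$ (so that the calculus is applied at a legitimate point) is immediate since finiteness of the objective forces $\bar x\in\dom\phi\cap\dom f$.
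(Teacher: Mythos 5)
Your parts \ref{i:prox:in} and \ref{i:convexf} follow essentially the same route as the paper: Fermat's rule plus the smooth-perturbation/limiting sum rule for the inclusion, convexity of $\lambda f+\phi$ to upgrade it to an equivalence, and \eqref{e:cq} with Clarke regularity (via \cite[Corollary 10.9]{Rock98}) for the exact sum rule; your discussion of why \eqref{e:cq} is exactly the horizon-subgradient qualification $\partial^\infty\phi=N_{\dom\phi}$ is a correct gloss on what the paper leaves implicit. Where you genuinely diverge is the continuity claim in \ref{i:both:conv}. The paper argues through duality: $\phi+\lambda f$ is essentially strictly convex and $1$-coercive, so $(\phi+\lambda f)^*$ is finite everywhere and differentiable, whence $(\nabla\phi+\lambda\partial f)^{-1}=\nabla(\phi+\lambda f)^*$ is single-valued and continuous on all of $\RR^n$ by \cite[Corollary 25.5.1]{Rock70}, and composing with $\nabla\phi$ finishes. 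You instead combine Fact~\ref{f:song}\ref{i:p} (nonempty compact values, upper semicontinuity) with single-valuedness. That is a valid alternative, but the paper's route gets single-valuedness for free, whereas yours needs it as an input, and your justification (\enquote{strict convexity of $D_\phi(\cdot,x)$ on $U$}) is slightly under-argued: a priori the minimizer could sit on $\bd(\dom\phi)$, where $\phi$ is only essentially strictly convex. To close this you should note that, by the sum rule you just established, any minimizer $\bar x$ satisfies $\partial\phi(\bar x)\neq\emp$, and essential smoothness of $\phi$ forces $\dom\partial\phi=U$, so the (convex) argmin set lies in $U$ where strict convexity does apply; alternatively, just quote the conjugate-differentiability argument. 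With that repair your proof is complete and correct.
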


\begin{proof} Consider
the function
$x\rightarrow \frac{1}{\lambda}\big(\lambda f(x)+
\phi(x)-\phi(y)-\scal{\nabla \phi(y)}{x-y}\big).
$

\ref{i:prox:in}: $x\in \lprox{\phi}{f}{\lambda}(y)$ implies
$0\in \partial (\lambda  f+ \phi)(x)-\nabla\phi(y),$
so $x\in [\partial (\lambda  f+ \phi)]^{-1}(\nabla\phi (y))$.
When \eqref{e:cq} holds, $\partial (\lambda  f+\phi)\subseteq \lambda\partial f+\nabla\phi$.

\ref{i:convexf}: The convexity of $\lambda f+\phi$ ensures
that $x\in \lprox{\phi}{f}{\lambda}(y)$ if and only if
$0\in \partial (\lambda  f+ \phi)(x)-\nabla\phi(y),$
which implies $\lprox{\phi}{f}{\lambda}(y)=[\partial (\lambda f+\phi)]^{-1}(\nabla\phi(y))$.
For each fixed $y\in U$, being the set of minimizers of convex function
$x\mapsto \lambda f(x)+\phi(x)-\scal{\nabla\phi(y)}{x-y}$, $\lprox{\phi}{f}{\lambda}(y)$ is
convex and closed. When \eqref{e:cq} holds and $f$ is Clarke regular,
$\partial (\lambda  f+\phi)=\lambda\partial f+\nabla\phi$
by \cite[Proposition 8.12, Corollary 10.9]{Rock98}.

\ref{i:both:conv}: Under the assumption
$(\dom f)\cap U\neq\varnothing$ (instead of \eqref{e:cq})
 the calculus rule
$\partial(\phi+\lambda f)=\partial \phi +\lambda\partial f$ holds for convex functions $\phi$ and $f$;
see, e.g.,\cite[Corollary 16.48(ii)]{BC}. Hence
\eqref{e:l:prox} follows from \ref{i:convexf}.
Because $\phi+\lambda f$ is
essentially strictly convex and $1$-coercive, the conjugate
$(\phi+\lambda f)^*$ is full domain and differentiable, so
$\nabla (\phi+\lambda f)^*=(\nabla\phi+\lambda\partial f)^{-1}$ is
continuous on $\RR^n$, see, e.g., \cite[Corollary 25.5.1]{Rock70}.
As $\nabla\phi$ is continuous on $U$,
we obtain that $\lprox{\phi}{f}{\lambda}$ is continuous
on $U$.
\end{proof}

\begin{remark}
Proposition~\ref{p:prox:e}\ref{i:prox:in} is a
pointwise version reformulation of
\cite[Lemma 3.3]{laude}. See also \cite{Sico03,scott}
for $\lenv{\phi}{f}{\lambda}$ and $\lprox{\phi}{f}{\lambda}$ when $f\in\GF$.
In \cite{bui}, $\lprox{\phi}{f}{\lambda}$ is called as a warped proximity operator.
\end{remark}

Our next result provides a
 connection between $\partial (\lambda f+\phi)^*$ and $\lprox{\phi}{f}{\lambda}$.
\begin{proposition}\label{t:grad}
Let $f:\RR^n\rightarrow\RX$ be proper lower semicontinuos with
prox-bound $\lambda_{f}>0$, let $0<\lambda<\lambda_{f}$, and let
$\nabla^{2}\phi(x)$ be invertible for every $x\in U$. Then
\begin{equation}\label{e:g:connect}
\partial(\lambda f+\phi)^*=\conv\lprox{\phi}{f}{\lambda}\circ\nabla\phi^* \text{ on $U$.}
\end{equation}
Hence, $\conv\lprox{\phi}{f}{\lambda}\circ\nabla\phi^*$ is always maximally monotone.
 If, in addition, $\lambda f+\phi$ is
convex, then
$\partial(\lambda f+\phi)^*=\lprox{\phi}{f}{\lambda}\circ\nabla\phi^*.$
\end{proposition}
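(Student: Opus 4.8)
The plan is to establish the identity $\partial(\lambda f+\phi)^* = \conv\lprox{\phi}{f}{\lambda}\circ\nabla\phi^*$ on $U$ by combining Fact~\ref{f:env:conj}\ref{i:env:def}, which relates $(\lambda f+\phi)^*$ to $\lenv{\phi}{f}{\lambda}$, with Fact~\ref{f:derivative}, which expresses the (Clarke/limiting/Fr\'echet) subdifferential of $-\lenv{\phi}{f}{\lambda}$ in terms of $\conv\lprox{\phi}{f}{\lambda}$. First I would fix $y\in U$ and set $x:=\nabla\phi^*(y)\in U$; since $\phi$ is Legendre, $\nabla\phi^*$ is a bijection from $\RR^n$ onto $U$ with inverse $\nabla\phi$, so every $y\in U$ arises this way. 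By \eqref{e:env2} we have $(\lambda f+\phi)^* = \phi^* - \lambda\,\lenv{\phi}{f}{\lambda}\circ\nabla\phi^*$ on $\RR^n$; the hypothesis that $\nabla^2\phi(x)$ is invertible for all $x\in U$ forces $\phi$ to be $C^2$ on $U$ (it is already twice differentiable there for Fact~\ref{f:derivative} to apply — I would state this as the running regularity assumption and note $\nabla^2\phi^*(y) = [\nabla^2\phi(\nabla\phi^*(y))]^{-1}$ is then well defined and continuous), so $\phi^*$ is $C^1$ on $U$ with $\nabla\phi^* $ its gradient, and the chain rule for $C^1$ perturbations of subdifferentials gives
\begin{equation*}
\partial(\lambda f+\phi)^*(y) = \nabla\phi^*(y) - \lambda\,\partial\big(\lenv{\phi}{f}{\lambda}\circ\nabla\phi^*\big)(y).
\end{equation*}

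Next I would differentiate the composition $\lenv{\phi}{f}{\lambda}\circ\nabla\phi^*$ via the chain rule. Because $\nabla\phi^*$ is a $C^1$ diffeomorphism of $U$ with Jacobian $\nabla^2\phi^*(y) = [\nabla^2\phi(x)]^{-1}$ (symmetric and invertible), and because Fact~\ref{f:derivative} tells us $-\lenv{\phi}{f}{\lambda}$ is Clarke regular with $\partial_C(-\lenv{\phi}{f}{\lambda})(x) = \tfrac1\lambda\nabla^2\phi(x)[\conv(\lprox{\phi}{f}{\lambda}(x)) - x]$, the standard chain rule for Clarke subdifferentials under a smooth invertible change of variables (e.g.\ \cite[Theorem~10.6]{Rock98} or the regular-composition rule, valid here since $-\lenv{\phi}{f}{\lambda}$ is regular and $\nabla\phi^*$ is smooth) yields
\begin{equation*}
\partial\big(\lenv{\phi}{f}{\lambda}\circ\nabla\phi^*\big)(y) = [\nabla^2\phi^*(y)]\,\partial\lenv{\phi}{f}{\lambda}(x) = [\nabla^2\phi(x)]^{-1}\cdot\Big(-\tfrac1\lambda\nabla^2\phi(x)[\conv(\lprox{\phi}{f}{\lambda}(x)) - x]\Big),
\end{equation*}
which simplifies to $-\tfrac1\lambda[\conv(\lprox{\phi}{f}{\lambda}(x)) - x]$. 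Substituting back, the $x = \nabla\phi^*(y)$ terms cancel and we are left with $\partial(\lambda f+\phi)^*(y) = \conv(\lprox{\phi}{f}{\lambda}(\nabla\phi^*(y)))$, which is \eqref{e:g:connect}. Maximal monotonicity is then immediate: $(\lambda f+\phi)^*$ is a proper lsc convex function with full domain (its conjugate $\lambda f+\phi$ is $1$-coercive by Proposition~\ref{p:bound}\ref{i:boundl} together with the $1$-coercivity of $\phi$), hence $\partial(\lambda f+\phi)^*$ is maximally monotone by Rockafellar's theorem, and it agrees with $\conv\lprox{\phi}{f}{\lambda}\circ\nabla\phi^*$ on $U = \dom\nabla\phi^*$.

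For the final assertion, if $\lambda f+\phi$ is convex then $\lprox{\phi}{f}{\lambda}(x)$ is already convex (and closed) for every $x$ by Proposition~\ref{p:prox:e}\ref{i:convexf}, so $\conv\lprox{\phi}{f}{\lambda} = \lprox{\phi}{f}{\lambda}$ and the identity reads $\partial(\lambda f+\phi)^* = \lprox{\phi}{f}{\lambda}\circ\nabla\phi^*$; alternatively this follows directly from Proposition~\ref{p:prox:e}\ref{i:convexf} via $\lprox{\phi}{f}{\lambda} = [\partial(\lambda f+\phi)]^{-1}\circ\nabla\phi$ and the inverse rule $[\partial g]^{-1} = \partial g^*$ for $g = \lambda f+\phi\in\GF$, composed with $\nabla\phi^* = (\nabla\phi)^{-1}$. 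The main obstacle is the careful bookkeeping in the chain rule for Clarke subdifferentials: one must verify that the composition $\lenv{\phi}{f}{\lambda}\circ\nabla\phi^*$ falls under a regular-function chain rule (it does, since $-\lenv{\phi}{f}{\lambda}$ is Clarke regular by Fact~\ref{f:derivative} and the outer map is $C^1$ with invertible derivative), so that the subdifferential transforms by exact left-multiplication by the Jacobian rather than merely an inclusion; the cancellation of the $\nabla^2\phi(x)$ factors then makes the rest routine.
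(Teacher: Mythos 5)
Your proof is correct and follows essentially the same route as the paper: combine Fact~\ref{f:env:conj} with Fact~\ref{f:derivative} and the chain rule, then cancel the invertible Hessian $\nabla^{2}\phi$. The only cosmetic difference is that you differentiate the composition with $\nabla\phi^*$ while the paper differentiates the identity pre-composed with $\nabla\phi$; these are equivalent since $\nabla\phi^*=(\nabla\phi)^{-1}$.
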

\begin{proof} By Fact~\ref{f:env:conj}, we get
$(\forall x\in U)\ [(\lambda f+\phi)^*-\phi^*](\nabla\phi(x))=-\lambda \lenv{\phi}{f}{\lambda}(x).$
Taking subdifferential both sides, by the chain rule \cite[Theorem 10.6]{Rock98} and Fact~\ref{f:derivative}, we have
$$(\forall x\in U)\ \nabla^2\phi(x) \partial[(\lambda f+\phi)^*-\phi^*](\nabla\phi(x))=\nabla^2\phi(x)[\conv \lprox{\phi}{f}{\lambda}(x)-x]$$
from which
\begin{equation}\label{e:difference}
(\forall x\in U)\ \partial[(\lambda f+\phi)^*-\phi^*](\nabla\phi(x))=\conv \lprox{\phi}{f}{\lambda}(x)-x,
\end{equation}
because $\nabla^2\phi(x)$ is invertible by the assumption.
By the sum rule \cite[Exercise 10.10]{Rock98},
$$\partial[(\lambda f+\phi)^*-\phi^*]=\partial (\lambda f+\phi)^*-\nabla\phi^*=\partial (\lambda f+\phi)^*
-(\nabla\phi)^{-1}.$$
Thus,
$ (\forall x\in U)\ \partial (\lambda f+\phi)^*(\nabla\phi(x))=\conv \lprox{\phi}{f}{\lambda}(x)$ by \eqref{e:difference}.
When $\lambda f+\phi$ is convex, $\lprox{\phi}{f}{\lambda}$ is convex-valued by Proposition~\ref{p:prox:e}\ref{i:convexf},
so $\conv$ is superfluous in \eqref{e:g:connect}.
\end{proof}


\subsection{$\lambda$-$\phi$-proximal hull}
The $\lambda$-$\phi$-proximal hull defined below extends the classical
proximal hull \cite[Example 1.44]{Rock98} ($\phi(x)=(1/2)\|x\|^2$), which is a special case
of the Lasry-Lions envelope \cite{attouch}, \cite[Example 1.46]{Rock98}.
\begin{definition} For a function $f:\RR^n\rightarrow \RX$ and $\lambda>0$, the
$\lambda$-$\phi$-proximal hull ($\lambda$-proximal hull for short) of $f$ is
the function $\lhul{\phi}{f}{\lambda}:
\RR^n \rightarrow [-\infty, +\infty]$ defined as the pointwise supremum of the collection
of all the functions of the form
$x\mapsto c-\frac{1}{\lambda}D_{\phi}(x,w)$ that are majorized by $f$,
where $c\in\RR, w\in U$.
\end{definition}
\begin{proposition}\label{p:phi:hull}
The following hold:
\begin{enumerate}
\item\label{i:h:s}
 $\lhul{\phi}{f}{\lambda}=-
\renv{\phi}{(-\lenv{\phi}{f}{\lambda})}{\lambda}$, i.e.,
$(\forall x\in\RR^n)\ \lhul{\phi}{f}{\lambda}(x)=\sup_{w\in U}\bigg(\lenv{\phi}{f}{\lambda}(w)-\frac{1}{\lambda}D_{\phi}(x,w)
\bigg).$
Moreover, $\lenv{\phi}{(\lhul{\phi}{f}{\lambda})}{\lambda}=
\lenv{\phi}{f}{\lambda}.$
\item\label{i:h:c}
 $\lhul{\phi}{f}{\lambda}=\big(f+\frac{1}{\lambda}\phi\big)^{**}-\frac{1}{\lambda}\phi$,
 where we use the convention $\infty-\infty=\infty$. If, in addition, $f+\frac{1}{\lambda}\phi
 \in\GF$, then
 $\lhul{\phi}{f}{\lambda}=f+\iota_{\dom\phi}$.
\item\label{i:h:e}
 $f\geq \lhul{\phi}{f}{\lambda}\geq \lenv{\phi}{f}{\lambda}$ on $U$.
\end{enumerate}
\end{proposition}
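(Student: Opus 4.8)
Everything is driven by the elementary reformulation in \ref{i:h:s}; once that is available, \ref{i:h:c} is obtained by pushing the supremum over ``proximal centres'' $w$ through the substitution $v=\nabla\phi(w)$ to produce a Fenchel biconjugate, and \ref{i:h:e} drops out of \ref{i:h:s} by evaluating at the centre $w=x$. For \ref{i:h:s} I would argue straight from the definition: a candidate minorant $z\mapsto c-\tfrac1\lambda D_\phi(z,w)$ with $c\in\RR$ and $w\in U$ is majorized by $f$ exactly when $c\le f(z)+\tfrac1\lambda D_\phi(z,w)$ for all $z$, that is, when $c\le\lenv{\phi}{f}{\lambda}(w)$. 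Hence, for fixed $w\in U$, the supremum of $c-\tfrac1\lambda D_\phi(x,w)$ over the admissible $c$ equals $\lenv{\phi}{f}{\lambda}(w)-\tfrac1\lambda D_\phi(x,w)$ (no $c$ being admissible, and this term being $-\infty$, precisely when $\lenv{\phi}{f}{\lambda}(w)=-\infty$; recall $\lenv{\phi}{f}{\lambda}<+\infty$ on $U$ by the standing hypothesis $\dom f\cap\dom\phi\ne\emptyset$), and taking the supremum over $w\in U$ gives the stated identity, which by the definition of the right Bregman envelope is $-\renv{\phi}{(-\lenv{\phi}{f}{\lambda})}{\lambda}$. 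For $\lenv{\phi}{(\lhul{\phi}{f}{\lambda})}{\lambda}=\lenv{\phi}{f}{\lambda}$: the inequality ``$\le$'' follows from $\lhul{\phi}{f}{\lambda}\le f$ (a supremum of functions $\le f$) and monotonicity of $g\mapsto\lenv{\phi}{g}{\lambda}$; the inequality ``$\ge$'' follows by taking $w=y$ in the identity just proved, which yields $\lhul{\phi}{f}{\lambda}(x)+\tfrac1\lambda D_\phi(x,y)\ge\lenv{\phi}{f}{\lambda}(y)$ for every $x$ and every $y\in U$, and then taking the infimum over $x$; for $y\notin U$ both sides equal $+\infty$ since $D_\phi(\cdot,y)\equiv+\infty$.

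For \ref{i:h:c} I would convert the formula of \ref{i:h:s} into conjugates. Expanding $D_\phi$ and isolating the $w$-dependent constant (the computation behind Fact~\ref{f:env:conj}\ref{i:env:def}, which—as noted there—is valid for every $f$) gives $\lenv{\phi}{f}{\lambda}(w)=\tfrac1\lambda\big(\phi^*(\nabla\phi(w))-(\lambda f+\phi)^*(\nabla\phi(w))\big)$ for $w\in U$, while the Fenchel--Young equality $\phi^*(\nabla\phi(w))=\scal{\nabla\phi(w)}{w}-\phi(w)$ gives $D_\phi(x,w)=\phi(x)+\phi^*(\nabla\phi(w))-\scal{\nabla\phi(w)}{x}$. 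Substituting into \ref{i:h:s}, the terms $\phi^*(\nabla\phi(w))$ cancel and one is left with
\[
\lhul{\phi}{f}{\lambda}(x)=\frac1\lambda\left(\sup_{w\in U}\big(\scal{\nabla\phi(w)}{x}-(\lambda f+\phi)^*(\nabla\phi(w))\big)-\phi(x)\right),
\]
the pulling-out of $\phi(x)$ and the points $x\notin\dom\phi$ being absorbed by the convention $\infty-\infty=\infty$. The decisive step is that, since $\phi$ is of Legendre type (A1) and $1$-coercive (A2), $\nabla\phi$ is a bijection of $U$ onto $\inte\dom\phi^*=\RR^n$; therefore the inner supremum runs over all $v=\nabla\phi(w)\in\RR^n$ and equals $(\lambda f+\phi)^{**}(x)$, so that $\lhul{\phi}{f}{\lambda}=\tfrac1\lambda(\lambda f+\phi)^{**}-\tfrac1\lambda\phi=(f+\tfrac1\lambda\phi)^{**}-\tfrac1\lambda\phi$ by positive homogeneity of biconjugation. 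If moreover $f+\tfrac1\lambda\phi\in\GF$, then $(f+\tfrac1\lambda\phi)^{**}=f+\tfrac1\lambda\phi$, so the right-hand side equals $f$ on $\dom\phi$ and, by the convention, $+\infty$ off $\dom\phi$, i.e.\ $f+\iota_{\dom\phi}$.

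Finally \ref{i:h:e} is immediate: on $U$ we have $\lhul{\phi}{f}{\lambda}\le f$ as already noted, and taking the admissible centre $w=x$ in \ref{i:h:s} together with $D_\phi(x,x)=0$ gives $\lhul{\phi}{f}{\lambda}(x)\ge\lenv{\phi}{f}{\lambda}(x)$. The one genuinely load-bearing step is the surjectivity $\nabla\phi(U)=\RR^n$ used in \ref{i:h:c}: this is exactly where both standing assumptions enter, and it is what allows a supremum over proximal centres to recover the \emph{full} biconjugate $(\lambda f+\phi)^{**}$ rather than merely its restriction (via $\nabla\phi^*$) to $\inte\dom\phi^*$. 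Everything else is careful bookkeeping of $\pm\infty$ values—empty families of minorants, points outside $\dom\phi$, and points where $(\lambda f+\phi)^*\equiv+\infty$—which is handled uniformly by the convention $\infty-\infty=\infty$.
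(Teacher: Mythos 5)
Your proof is correct and follows essentially the same route as the paper: the reduction of the hull to $\sup_{w\in U}\bigl(\lenv{\phi}{f}{\lambda}(w)-\tfrac{1}{\lambda}D_{\phi}(x,w)\bigr)$, the conversion to conjugates via Fact~\ref{f:env:conj} and Fenchel--Young, and the use of $\ran\nabla\phi=\RR^n$ to recover the full biconjugate are exactly the paper's steps. The only (harmless) variation is that you establish $\lenv{\phi}{(\lhul{\phi}{f}{\lambda})}{\lambda}=\lenv{\phi}{f}{\lambda}$ by two inequalities (monotonicity plus the choice $w=y$) rather than by the paper's observation that $f$ and its hull admit the same family of minorants $\phi_{c,w}$.
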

\begin{proof}
\ref{i:h:s}:
Denote $\phi_{c,w}=c-\frac{1}{\lambda}D_{\phi}(\cdot,w)$. Then
$\phi_{c,w}\leq f$ if and only if $(\forall x\in \RR^n)\
c\leq f(x)+\frac{1}{\lambda}D_{\phi}(x,w)$, which means
$c\leq \lenv{\phi}{f}{\lambda}(w).$ Therefore,
$\lhul{\phi}{f}{\lambda}$ can be viewed as the pointwise supremum
of the collection of the functions of the form
$\lenv{\phi}{f}{\lambda}(w)-\frac{1}{\lambda}D_{\phi}(x,w)$ with $w\in U$.
The collection of $\phi_{c,w}$ with $\phi_{c,w}\leq f$
is the same as the collection of all $\phi_{c,w}$ with $\phi_{c,w}
\leq \lhul{\phi}{f}{\lambda}$.
Since
$$\lenv{\phi}{f}{\lambda}(w)=\sup\left\{c\middle|\ (\forall x\in\RR^n)\ c\leq f(x)+\frac{1}{\lambda}D_{\phi}(x,w)\right\},$$
$$\lenv{\phi}{(\lhul{\phi}{f}{\lambda})}{\lambda}(w)=\sup\left\{c\middle|\ (\forall x\in\RR^n)\ c\leq \lhul{\phi}{f}{\lambda}(x)+\frac{1}{\lambda}D_{\phi}(x,w)\right\},$$
this reveals that
$\lenv{\phi}{f}{\lambda}=\lenv{\phi}{(\lhul{\phi}{f}{\lambda})}{\lambda}.$

\ref{i:h:c}: By Fact~\ref{f:env:conj} and \ref{i:h:s}, we have
 $\lhul{\phi}{f}{\lambda}(x)=$
\begin{align}
 &
\sup_{w\in\RR^n}\left[\bigg(\frac{1}{\lambda}\phi^*-\frac{1}{\lambda}(\lambda f+\phi)^*\bigg)\circ\nabla\phi(w)-\frac{1}{\lambda}D_{\phi}(x,w)\right]\\
&=
\sup_{w\in\IDD}\left[\bigg(\frac{1}{\lambda}\phi^*-\frac{1}{\lambda}(\lambda f+\phi)^*\bigg)(\nabla\phi(w))+\frac{1}{\lambda}\phi(w)+
\frac{1}{\lambda}\scal{\nabla\phi(w)}{x-w}
\right]-\frac{1}{\lambda}\phi(x)\\
& =\frac{1}{\lambda}
\sup_{w\in\IDD}\left[-(\lambda f+\phi)^*(\nabla\phi(w))+\phi^*(\nabla\phi(w))+
\phi(w)-\scal{\nabla\phi(w)}{w}+\scal{\nabla\phi(w)}{x}
\right]-\frac{1}{\lambda}\phi(x)\label{e:h:one}\\
& = \frac{1}{\lambda}
\sup_{w\in\IDD}\left[-(\lambda f+\phi)^*(\nabla\phi(w))+\scal{\nabla\phi(w)}{x}
\right]-\frac{1}{\lambda}\phi(x)\label{e:h:two}\\
&= \frac{1}{\lambda}(\lambda f+\phi)^{**}(x)-\frac{1}{\lambda}\phi(x)
=\bigg(f+\frac{1}{\lambda}\phi\bigg)^{**}(x)-\frac{1}{\lambda}\phi(x),
\end{align}
in which we used $\phi^*(\nabla\phi(w))+\phi(w)=\scal{\nabla\phi(w)}{w}$
 in \eqref{e:h:one},
  and
$\ran\nabla\phi=\RR^n$ in \eqref{e:h:two}. When $f+\frac{1}{\lambda}\phi\in \GF$,
the Fenchel-Moreau biconjugate theorem
\cite[Theorem 13.37]{BC} gives $\big(f+\frac{1}{\lambda}\phi\big)^{**}=
f+\frac{1}{\lambda}\phi.$

\ref{i:h:e}: This follows from \ref{i:h:s} and \ref{i:h:c}.
\end{proof}

\subsection{Properties of the Combettes-Reyes envelope and proximal mapping}
The following result refines and complements some results of \cite{com13}.
\begin{proposition}\label{p:aniso:env} Let $f\in\GF$.
Then the following hold:
\begin{enumerate}
\item\label{i:aniso:d}
 $\dom f\square \phi=\dom f+\dom\phi$, and
 $f\square \phi \in\GF$ is essentially smooth, so continuously
 differentiable on $\inte\dom (f\square \phi)=\dom f +\IDD$.
\item\label{i:aniso:s} $\dom\aprox{\phi}{f}=
\dom f+\dom\phi$. For every $x\in \dom f+\dom\phi$, $\aprox{\phi}{f}(x)$
is single-valued.
\item\label{i:aniso1} $\aprox{\phi}{f}$ is continuous on $\dom f+\IDD$. Moreover,
\begin{equation}\label{e:exp}
(\forall x\in \dom f +\IDD) \aprox{\phi}{f}(x)=(\Id+\nabla\phi^*\circ \partial f)^{-1}(x).
\end{equation}
\item\label{i:aniso2}
 $\argmin f \cap U =\{x\in U:\ \aprox{\phi^*}{f^*}(\nabla\phi (x))=0\}.$
\item\label{i:aniso3}
 If $\phi$ is nonnegative, and $\phi(0)=0$, then
 \begin{equation}\label{e:function}
 f\geq f\square\phi,\quad \inf f= \inf (f\square\phi), \text{ and }
 \end{equation}
 \begin{equation}\label{e:set}
 \argmin f=\argmin (f\square\phi).
 \end{equation}
\end{enumerate}
\end{proposition}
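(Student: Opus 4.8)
The plan is to make item (i) the engine and then derive (ii)--(v) from it, the single point requiring care being the single\nobreakdash-valuedness of $\aprox{\phi}{f}$ at boundary points. For (i) the crucial observation is that $f\square\phi$ is just the infimal convolution of $f$ and $\phi$, so its effective domain is $\dom f+\dom\phi$ and its conjugate is $(f\square\phi)^*=f^*+\phi^*$. Since $f\in\GF$, Corollary~\ref{c:proxbound} gives an affine minorant of $f$; together with the $1$-coercivity of $\phi$ (assumption~A2) this makes $y\mapsto f(y)+\phi(x-y)$ bounded below and $1$-coercive for each fixed $x$, so the infimum is attained and finite whenever $x\in\dom f+\dom\phi$. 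Hence $f\square\phi$ is proper and convex, and being an exact infimal convolution with a supercoercive summand it is lower semicontinuous, i.e.\ $f\square\phi\in\GF$ and $(f\square\phi)^*=f^*+\phi^*$ (cf.\ \cite[Proposition~12.14]{BC}). For essential smoothness I would argue dually: $\phi$ Legendre forces $\phi^*$ Legendre, and $\dom\phi^*=\RR^n$ (A2) gives $\dom\partial\phi^*=\RR^n$, so $\phi^*$ is \emph{strictly} convex on $\RR^n$; thus $f^*+\phi^*$ is essentially strictly convex, whence its conjugate $f\square\phi$ is essentially smooth by the Legendre correspondence \cite[Theorem~26.3]{Rock70}. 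An essentially smooth convex function is continuously differentiable on the interior of its domain, and $\inte\dom(f\square\phi)=\inte(\dom f+\dom\phi)=\dom f+\IDD$, because $\dom f+\IDD$ is open, convex, and has the same closure as $\dom f+\dom\phi$ (two open convex sets with a common closure coincide).

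For (ii), the equality $\dom\aprox{\phi}{f}=\dom f+\dom\phi$ is already contained in the coercivity argument of (i): the argmin is nonempty exactly where the infimum defining $f\square\phi(x)$ is attained. For single\nobreakdash-valuedness, if $y_1\ne y_2$ both attain it, convexity of $y\mapsto f(y)+\phi(x-y)$ forces this function to be constant on $[y_1,y_2]$, hence $\phi$ is affine on $[x-y_1,x-y_2]$; when $x\in\dom f+\IDD$ the subdifferential sum rule applies and shows every minimizer $y$ satisfies $\nabla\phi(x-y)\in\partial f(y)$, in particular $x-y\in\dom\partial\phi=\IDD$, so the affineness of $\phi$ on the nondegenerate segment $[x-y_1,x-y_2]\subseteq\IDD$ contradicts the strict convexity of $\phi$ on $\IDD$ (which holds by A1: essential smoothness gives $\dom\partial\phi=\IDD$, and then essential strict convexity is strict convexity on $\IDD$). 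For (iii), the same optimality analysis on $\dom f+\IDD$ gives $y\in\aprox{\phi}{f}(x)\iff\nabla\phi(x-y)\in\partial f(y)\iff x-y=\nabla\phi^*(u)$ for some $u\in\partial f(y)\iff x\in(\Id+\nabla\phi^*\circ\partial f)(y)$, using that $\nabla\phi^*$ is the inverse of $\nabla\phi\colon\IDD\to\RR^n$ (a consequence of $\phi$ being Legendre and $1$-coercive); this is \eqref{e:exp}. Moreover the unique optimal dual vector equals $\nabla(f\square\phi)(x)$ and lies in $\partial\phi(x-y)$, so $\aprox{\phi}{f}=\Id-\nabla\phi^*\circ\nabla(f\square\phi)$ on $\dom f+\IDD$; the right-hand side is continuous there since $\nabla(f\square\phi)$ is continuous on the open set $\dom f+\IDD$ by (i) and $\nabla\phi^*$ is continuous on $\RR^n$.

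For (iv) I would apply (ii)--(iii) to the pair $(f^*,\phi^*)$, which is legitimate because $\phi^*$ is again Legendre and, being strictly convex, makes $\aprox{\phi^*}{f^*}$ single\nobreakdash-valued wherever it is nonempty. For $x\in\IDD$ one then has $0\in\aprox{\phi^*}{f^*}(\nabla\phi(x))$ iff $0$ minimizes $w\mapsto f^*(w)+\phi^*(\nabla\phi(x)-w)$ iff (sum rule, the constraint qualification being automatic since $\dom\phi^*=\RR^n$) $\nabla\phi^*(\nabla\phi(x))\in\partial f^*(0)$, i.e.\ $x\in\partial f^*(0)$, i.e.\ $0\in\partial f(x)$, i.e.\ $x\in\argmin f$; here $\nabla\phi^*(\nabla\phi(x))=x$ is used. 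For (v), taking $y=x$ and $\phi(0)=0$ gives $f\square\phi\le f$, and $\inf(f\square\phi)=\inf_y\big(f(y)+\inf\phi\big)=\inf f$ since $\phi\ge0$ with $\phi(0)=0$ forces $\inf\phi=0$; the inclusion $\argmin f\subseteq\argmin(f\square\phi)$ is then immediate. Conversely, if $x\in\argmin(f\square\phi)$, exactness yields $\bar y$ with $f(\bar y)+\phi(x-\bar y)=\inf f$, so $\phi\ge0$ forces $f(\bar y)=\inf f$ and $\phi(x-\bar y)=0$; finally $\phi\ge0$, $\phi(0)=0$ and essential smoothness give $0\in\partial\phi(0)$, hence $0\in\dom\partial\phi=\IDD$, where $\phi$ is strictly convex, so $\{z:\phi(z)=0\}=\{0\}$ and $x=\bar y\in\argmin f$.

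The hard part will be completing the single\nobreakdash-valuedness in (ii) at points $x\in(\dom f+\dom\phi)\setminus(\dom f+\IDD)$: there the constraint qualification fails, and two distinct minimizers $y_1,y_2$ force the whole segment $[x-y_1,x-y_2]$ into $\bd\dom\phi$ with $\phi$ finite and affine on it. Ruling this configuration out for a $1$-coercive Legendre function is the one genuinely delicate step --- I expect to have to exploit essential smoothness (no subgradients on $\bd\dom\phi$) together with $1$-coercivity --- and if a short direct argument does not fall out, I would invoke the corresponding single\nobreakdash-valuedness statement of Combettes and Reyes \cite{com13}.
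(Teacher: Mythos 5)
Your items (i), (iv) and (v) are sound and essentially match the paper's arguments. In (iii) you take a genuinely different route: the paper proves continuity of $\aprox{\phi}{f}$ by a direct compactness argument (boundedness of minimizers via an affine minorant of $f$ and $1$-coercivity of $\phi$, then uniqueness of subsequential limits), and only afterwards quotes the formula \eqref{e:exp} from Combettes--Reyes; you instead derive \eqref{e:exp} from first-order optimality and then read off continuity from $\aprox{\phi}{f}=\Id-\nabla\phi^*\circ\nabla(f\square\phi)$ together with the $C^1$ smoothness of $f\square\phi$ established in (i). That is a legitimate and arguably cleaner alternative; the constraint qualification you need for the sum rule does hold at every $x\in\dom f+\IDD$, since such an $x$ can be written $y_0+u_0$ with $y_0\in\dom f\cap\inte\dom\phi(x-\cdot)$.

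The genuine gap is the one you flagged yourself: single-valuedness of $\aprox{\phi}{f}$ at points of $(\dom f+\dom\phi)\setminus(\dom f+\IDD)$. Your strategy there --- showing that two minimizers would force $\phi$ to be finite and affine on a nondegenerate segment of $\bd\dom\phi$ and then trying to exclude that configuration --- is the wrong target: essential strict convexity says nothing about $\phi$ on $\bd\dom\phi=\dom\phi\setminus\dom\partial\phi$, and I see no way to rule such segments out from {\bfseries A1}--{\bfseries A2} alone. The paper closes the case in one line by a different observation: the whole objective $h:=f+\phi(x-\cdot)$ is itself proper, lsc, $1$-coercive and \emph{essentially strictly convex}, and the argmin of such a function is automatically a singleton, because $\argmin h$ is a convex subset of $\dom\partial h$ (every minimizer carries the subgradient $0$) on which an essentially strictly convex function cannot be constant unless the set is a point. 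So the question is not whether $\phi$ is strictly convex on the boundary segment, but whether that segment meets $\dom\partial h$ --- and essential strict convexity of $h$ is exactly what forbids a nondegenerate such segment. To get essential strict convexity of $h$ you can run the dual argument you already used in (i) in the other direction: $h^*=f^*\square\bigl(\langle\cdot,x\rangle+\phi^*(-\cdot)\bigr)$ is an infimal convolution with a function that is finite and differentiable on all of $\RR^n$, has full domain, and is proper; it is therefore essentially smooth (the same \cite[Corollary~26.3.2]{Rock70} invoked for $f\square\phi$ in (i)), and \cite[Theorem~26.3]{Rock70} then gives essential strict convexity of $h=h^{**}$. With that, your fallback citation of Combettes--Reyes becomes unnecessary and the proof is complete.
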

\begin{proof}
\ref{i:aniso:d}: Apply \cite[Proposition 12.6(ii)]{BC} for $\dom f\square \phi$. Because
$f\in\GF$ and $\phi$ is essentially smooth with $\dom\phi^*=\RR^n$, \cite[Corollary 26.3.2]{Rock70}
shows that $f\square \phi \in\GF$ is essentially smooth. Moreover, $\inte\dom (f\square \phi)=\dom f+\IDD$
because $\dom f+\IDD\subseteq \reli\dom(f\square \phi)=\reli\dom f+\reli\dom\phi\subseteq\dom f+\IDD$.

\ref{i:aniso:s}: For every $x\in\dom f +\dom\phi$, the function
$y\mapsto f(y)+\phi(x-y)$ is in $\GF$, essentially strictly convex and $1$-coercive,
so it has a unique minimizer.

\ref{i:aniso1}: Let $x\in\dom f+\IDD$. We show that $\aprox{\phi}{f}$ is continuous at $x$.
Let $(x_{k})_{k\in\NN}$ be an arbitrary sequence in $\dom f+\IDD$  such that $x_{k}\rightarrow x$, and
let $y_{k}:=\aprox{\phi}{f}(x_{k})$. It suffices to show $y_{k}\rightarrow \aprox{\phi}{f}(x)$.
First we show that $(y_{k})_{k\in\NN}$ is bounded.
Suppose not, after passing to a subsequence and relabelling, we can assume $\|y_{k}\|\rightarrow\infty$.
Now $f\in\GF$ ensures that $f$ possesses a continuous minorant, say,
$f\geq \scal{u}{\cdot}+\eta$ for some $u\in\RR^n$ and $\eta\in\RR$. By \ref{i:aniso:d}
and $(f\square\phi) (x_{k})=f(y_{k})+\phi(x_{k}-y_{k})$, we get
\begin{align*}
(f\square\phi)(x) &\leftarrow (f\square\phi)(x_{k})=f(y_{k})+\phi(x_{k}-y_{k})\\
&\geq \scal{u}{y_{k}}+\eta+\phi(x_{k}-y_{k})\geq \|y_{k}\|(-\|u\|+\phi(x_{k}-y_{k})/\|y_{k}\|)
+\eta\\
&\rightarrow +\infty,
\end{align*}
which is impossible. Hence, $(y_{k})_{k\in\NN}$ is bounded. Next we show that
$(y_{k})_{k\in\NN}$ has a unique subsequential limit, namely, $\aprox{\phi}{f}(x)$.
Indeed, let $(y_{k_{l}})_{l\in\NN}$ be a convergent subsequence of $(y_{k})_{k\in\NN}$
with a limit $y\in\RR^n$. Since $f\square \phi$ is continuous on
$\dom f+\IDD$ by \ref{i:aniso:d}, we have
$(f\square\phi)(x)=\lim_{l\rightarrow\infty} (f\square\phi)(x_{k_{l}})=\lim_{l\rightarrow\infty}(f(y_{k_{l}})+\phi(x_{k_{l}}-y_{k_{l}}))\geq\liminf_{l\rightarrow\infty}
f(y_{k_{l}})+\liminf_{l\rightarrow\infty}\phi(x_{k_{l}}-y_{k_{l}})
\geq f(y)+\phi(x-y)\geq (f\square
\phi)(x)$,
from which $f(y)+\phi(x-y)=(f\square\phi)(x)$, and so $y=\aprox{\phi}{f}(x)$ by \ref{i:aniso:s}.
We conclude that
$\aprox{\phi}{f}$ is continuous at $x$. In turn,
\eqref{e:exp} follows from \cite[Proposition 6]{com13}.

\ref{i:aniso2}: We have $0\in\partial f(x)\Leftrightarrow x\in\partial f^*(0) \Leftrightarrow
\nabla\phi(x)\in \nabla\phi \circ \partial f^*(0)\Leftrightarrow 0\in (\Id+\nabla\phi\circ \partial f^*)^{-1}
(\nabla\phi(x))\Leftrightarrow 0=(\Id+\nabla\phi\circ\partial f^*)^{-1}
(\nabla\phi(x))=\aprox{\phi^*}{f^*}(\nabla\phi(x))$,
because $ (\Id+\nabla\phi\circ\partial f^*)^{-1}$ is single-valued and \ref{i:aniso1}.

\ref{i:aniso3}: \eqref{e:function} follows from \eqref{d:aniso}.
 To see \eqref{e:set},
let $x\in\argmin f$. By $\phi\geq 0$ and \eqref{e:function}, we have
$\inf (f\square \phi)=\inf f=f(x)\geq (f\square\phi)(x)$, so
$x\in\argmin (f\square\phi)$. Conversely, let $x\in\argmin (f\square\phi)$. Because $y\mapsto
f(y)+\phi(x-y)$ is $1$-coercive, there exists $y\in\RR^n$ such that
$\inf f=\inf (f\square\phi)=(f\square\phi)(x)=
f(y)+\phi(x-y)\geq \inf f,$
which implies $f(y)=\inf f$ and $\phi(x-y)=0$. Because $\phi\geq 0$, $\phi(0)=0$,
$\phi$ is essentially strictly convex, $\phi$ must have a unique minimizer at $0$,
so $x=y$. Hence $x\in\argmin f$. Altogether, $\argmin f=\argmin (f\square\phi)$.
\end{proof}

Our last result in this subsection expresses proximal mappings by anisotropic proximal mappings.
\begin{proposition}\label{t:prox:aniso}
Suppose that $f\in\GF$ and $(\reli\dom f)\cap U\neq\varnothing$. Then for $\lambda>0$ one has
$$(\forall x\in U)\ \lprox{\phi}{f}{\lambda}(x)=\nabla\phi^*\bigg(\nabla\phi(x)-\lambda\aprox{1/\lambda\timess\phi^*}{f^*}
\big(\nabla\phi(x)/\lambda\big)\bigg).$$
Consequently,
$(\forall x\in U)\ \nabla\phi \big(\lprox{\phi}{f}{\lambda}(x)\big)+\lambda\aprox{1/\lambda\timess\phi^*}{f^*}
\big(\nabla\phi(x)/\lambda\big)=\nabla\phi(x).$
\end{proposition}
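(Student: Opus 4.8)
The plan is to convert the defining variational problem for $\lprox{\phi}{f}{\lambda}(x)$ into one involving $f^*$ and a Combettes--Reyes anisotropic proximal map, via Fenchel duality applied pointwise. First I would fix $x \in U$ and recall that, since $\lambda f + \phi$ is convex with $(\reli\dom f)\cap U \neq \varnothing$, Proposition~\ref{p:prox:e}\ref{i:both:conv} gives
\[
\lprox{\phi}{f}{\lambda}(x) = \Big(\tfrac{1}{\lambda}\nabla\phi + \partial f\Big)^{-1}\Big(\tfrac{1}{\lambda}\nabla\phi(x)\Big),
\]
so writing $p := \lprox{\phi}{f}{\lambda}(x)$ (single-valued here) amounts to the inclusion $\tfrac{1}{\lambda}\nabla\phi(x) - \tfrac{1}{\lambda}\nabla\phi(p) \in \partial f(p)$, i.e.\ $p \in \partial f^*\big(\tfrac{1}{\lambda}(\nabla\phi(x) - \nabla\phi(p))\big)$.

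Next I would identify this with the optimality condition for an anisotropic proximal problem in the dual variable $z := \nabla\phi(x)/\lambda$. Consider $\aprox{1/\lambda\timess\phi^*}{f^*}(z) = \argmin_{y}\big(f^*(y) + (1/\lambda\timess\phi^*)(z-y)\big)$; by \eqref{e:epimul}, $(1/\lambda\timess\phi^*)(z-y) = \tfrac{1}{\lambda}\phi^*(\lambda(z-y))$, and since $\phi^*$ has full domain and is essentially smooth (by A1--A2 and Legendre duality), this function is in $\GF$ and the minimizer $u$ is characterized by $0 \in \partial f^*(u) - \nabla\phi^*(\lambda(z-u))$, i.e.\ $\nabla\phi^*(\lambda z - \lambda u) \in \partial f^*(u)$. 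The claim to match is that $u = \aprox{1/\lambda\timess\phi^*}{f^*}(z)$ corresponds to $\lambda u = \nabla\phi(x) - \nabla\phi(p)$; substituting $\lambda z = \nabla\phi(x)$ and the candidate $\lambda u = \nabla\phi(x) - \nabla\phi(p)$, the condition $\nabla\phi^*(\lambda z - \lambda u) \in \partial f^*(u)$ becomes $\nabla\phi^*(\nabla\phi(p)) = p \in \partial f^*\big(\tfrac{1}{\lambda}(\nabla\phi(x)-\nabla\phi(p))\big)$, which is exactly the condition derived above. By uniqueness of the anisotropic proximal point (Proposition~\ref{p:aniso:env}\ref{i:aniso:s}, using $\reli\dom f^* = \inte\dom f^*$ considerations and that $1/\lambda\timess\phi^*$ has full domain), this forces $\lambda\,\aprox{1/\lambda\timess\phi^*}{f^*}(\nabla\phi(x)/\lambda) = \nabla\phi(x) - \nabla\phi(p)$, which is the ``consequently'' display; applying $\nabla\phi^*$ after rearranging yields the main formula $p = \nabla\phi^*\big(\nabla\phi(x) - \lambda\aprox{1/\lambda\timess\phi^*}{f^*}(\nabla\phi(x)/\lambda)\big)$.

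The main obstacle I anticipate is justifying the subdifferential sum rule for $f^* + (1/\lambda\timess\phi^*)(z - \cdot)$ and ensuring the anisotropic proximal map is genuinely single-valued and well-defined at the point $\nabla\phi(x)/\lambda$ — this is where the constraint qualification $(\reli\dom f)\cap U \neq \varnothing$ must be transferred to a statement about $\dom f^* + \dom(1/\lambda\timess\phi^*)$, noting that $\dom(1/\lambda\timess\phi^*) = \RR^n$ since $\phi^*$ has full domain by A2, so the domain condition in Proposition~\ref{p:aniso:env} is automatic. A secondary point of care is the interplay of the epi-multiplication scaling with the chain rule: one must verify $\nabla\big((1/\lambda\timess\phi^*)(\cdot)\big)(w) = \nabla\phi^*(\lambda w)$ and track the factor $\lambda$ consistently through the inversion $\nabla\phi\circ\nabla\phi^* = \Id$ on $\RR^n$. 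Once the optimality conditions on both sides are written out cleanly, the identification is a matching of two inclusions, and the conclusion follows from uniqueness.
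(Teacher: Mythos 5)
Your proposal is correct, and it takes a genuinely different route from the paper. Both arguments start from Proposition~\ref{p:prox:e}\ref{i:both:conv}, but the paper then stays at the level of conjugate calculus: it writes $(\nabla\phi+\lambda\partial f)^{-1}=\nabla(\phi+\lambda f)^*$, invokes \cite[Theorem~16.4]{Rock70} under the hypothesis $(\reli\dom f)\cap U\neq\varnothing$ to get the exact identity $(\phi+\lambda f)^*=\lambda\timess\big(f^*\square(1/\lambda\timess\phi^*)\big)$, and then reads off the gradient of the exact infimal convolution from \cite[Proposition~16.61(i)]{BC}. You instead work pointwise: you extract the inclusion $\tfrac{1}{\lambda}(\nabla\phi(x)-\nabla\phi(p))\in\partial f(p)$, invert it to $p\in\partial f^*\big(\tfrac{1}{\lambda}(\nabla\phi(x)-\nabla\phi(p))\big)$, and verify that the candidate $u=\tfrac{1}{\lambda}(\nabla\phi(x)-\nabla\phi(p))$ satisfies the first-order condition of the anisotropic problem. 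A pleasant by-product of your route is that it establishes attainment in $\aprox{1/\lambda\timess\phi^*}{f^*}\big(\nabla\phi(x)/\lambda\big)$ constructively (the candidate is exhibited as a minimizer, since a point where $0$ lies in the sum of a subgradient of $f^*$ and the gradient of the smooth kernel is a global minimizer of the convex objective), whereas the paper obtains attainment from the exactness clause of Rockafellar's theorem; in fact your argument appears to need only $(\dom f)\cap U\neq\varnothing$, the hypothesis of Proposition~\ref{p:prox:e}\ref{i:both:conv}, rather than the relative-interior condition. Two small points to tighten: (a) the single-valuedness of the anisotropic proximal point should not be cited from Proposition~\ref{p:aniso:env}\ref{i:aniso:s} as stated, since there the kernel is the standing $1$-coercive $\phi$ while $1/\lambda\timess\phi^*$ need not be $1$-coercive; instead, note that $\phi^*$ is Legendre with $\dom\phi^*=\RR^n$, hence strictly convex on all of $\RR^n$, so $y\mapsto f^*(y)+\tfrac{1}{\lambda}\phi^*(\lambda(z-y))$ has at most one minimizer. (b) You should record explicitly that $p=\lprox{\phi}{f}{\lambda}(x)\in U$ (so that $\nabla\phi(p)$ makes sense); this follows because the inclusion defining $(\tfrac{1}{\lambda}\nabla\phi+\partial f)^{-1}$ forces $p\in\dom\nabla\phi=U$ by essential smoothness. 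With these observations your argument is complete.
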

\begin{proof} By Proposition~\ref{p:prox:e}\ref{i:both:conv},
\begin{equation}\label{e:pconv}
\lprox{\phi}{f}{\lambda}=(\nabla\phi+\lambda\partial f)^{-1}\circ\nabla\phi.
\end{equation}
As $(\reli\dom f)\cap U\neq\varnothing$ and
$\phi^*$ essentially smooth, we have that $(\lambda f)^*\square\phi^*=(\phi+\lambda f)^*$
is essentially smooth, see, e.g., \cite[Corollary 26.3.2]{Rock70},
so differentiable because $\dom\phi^*=\RR^n$. Then
\begin{equation}\label{e:subconj}
(\nabla\phi+\lambda\partial f)^{-1}=\nabla (\phi+\lambda f)^*.
\end{equation}
Now \cite[Theorem 16.4]{Rock70} implies
$(\phi+\lambda f)^*=\lambda(f+\phi/\lambda)^*(\cdot/\lambda)=\lambda \big(f^*\square (\phi/\lambda)^*\big)(\cdot/\lambda)$
and $\Box$ is exact.  By \cite[Proposition 16.61(i)]{BC}, for every $y\in\RR^n$,
\begin{align}\label{e:gconj}
\nabla (\phi+\lambda f)^*(y) &=\nabla \big(f^*\square (\phi/\lambda)^*\big)(y/\lambda)=\nabla (\phi/\lambda)^*\big(y/\lambda-\aprox{(\phi/\lambda)^*}{f^*}(y/\lambda)\big)\nonumber\\
&=\nabla\phi^*\big(\lambda(y/\lambda-\aprox{(\phi/\lambda)^*}{f^*}(y/\lambda))\big)
=\nabla\phi^*\big(y-\lambda\aprox{1/\lambda\timess\phi^*}{f^*}(y/\lambda)\big).
\end{align}
It follows from \eqref{e:pconv}, \eqref{e:subconj} and \eqref{e:gconj} that for $x\in\IDD$,
$$\lprox{\phi}{f}{\lambda}(x)=\nabla (\phi+\lambda f)^*(\nabla\phi(x))
=\nabla\phi^*\big(\nabla\phi(x)-\lambda\aprox{1/\lambda\timess\phi^*}{f^*}(\nabla\phi(x)/\lambda)\big),$$
as required.
\end{proof}
\begin{corollary}\label{c:moreau:p} Suppose that $f\in\GF$ and $(\reli\dom f)\cap U\neq\varnothing$.
Then for $\lambda>0$ one has
$$(\forall x\in U)\ x=\nabla\phi^* \big(\lprox{\phi^*}{f^*}{\lambda}(\nabla \phi(x))\big)+\lambda\aprox{1/\lambda\timess\phi}{f}
\big(x/\lambda\big).
$$
\end{corollary}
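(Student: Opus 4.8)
The plan is to obtain the identity from Proposition~\ref{t:prox:aniso} by running that result on the dual pair $(\phi^{*},f^{*})$ in place of $(\phi,f)$, and then specializing the free variable to $y=\nphi(x)$. Thus the argument has two parts: (i) verifying that $(\phi^{*},f^{*})$ is an admissible pair for Proposition~\ref{t:prox:aniso}, and (ii) a short formal computation using the Legendre inversion identities.

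For (i): by Legendre duality, e.g.\ \cite[Theorem~26.5]{Rock70}, $\phi^{*}$ is again essentially smooth and essentially strictly convex, hence of Legendre type; moreover \textbf{A2} gives $\dom\phi^{*}=\RR^{n}$, so $\intdom\phi^{*}=\RR^{n}$, $\ran\nphi=\RR^{n}$, and $\nabla\phi^{*}=(\nphi)^{-1}$ with $\nphi=(\nabla\phi^{*})^{-1}$ between $\RR^{n}$ and $U$. Also $f\in\GF$ yields $f^{*}\in\GF$ with $f^{**}=f$, and $(\reli\dom f^{*})\cap\intdom\phi^{*}=\reli\dom f^{*}\neq\varnothing$. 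Granting these, Proposition~\ref{t:prox:aniso} applied to $(\phi^{*},f^{*})$---and using $\phi^{**}=\phi$, $f^{**}=f$, $\nabla\phi^{**}=\nphi$, and $1/\lambda\timess\phi^{**}=1/\lambda\timess\phi$---gives
\[
(\forall y\in\RR^{n})\qquad \lprox{\phi^{*}}{f^{*}}{\lambda}(y)=\nphi\Big(\nabla\phi^{*}(y)-\lambda\,\aprox{1/\lambda\timess\phi}{f}\big(\nabla\phi^{*}(y)/\lambda\big)\Big).
\]

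For (ii): fix $x\in U$ and take $y:=\nphi(x)\in\RR^{n}$. Then $\nabla\phi^{*}(y)=\nabla\phi^{*}(\nphi(x))=x$, so the displayed identity becomes $\lprox{\phi^{*}}{f^{*}}{\lambda}(\nphi(x))=\nphi\big(x-\lambda\,\aprox{1/\lambda\timess\phi}{f}(x/\lambda)\big)$. In particular $x-\lambda\,\aprox{1/\lambda\timess\phi}{f}(x/\lambda)\in U$, so applying $\nabla\phi^{*}$ to both sides and using $\nabla\phi^{*}\circ\nphi=\Id$ on $U$ once more gives $\nabla\phi^{*}\big(\lprox{\phi^{*}}{f^{*}}{\lambda}(\nphi(x))\big)=x-\lambda\,\aprox{1/\lambda\timess\phi}{f}(x/\lambda)$, which is exactly the asserted identity after transposing terms. (One could equally read off the conclusion from the dual companion equation displayed at the end of Proposition~\ref{t:prox:aniso}, arriving at the same place.)

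The one point that needs care is (i): Proposition~\ref{t:prox:aniso} is stated under the running assumptions on the reference function, so one should confirm that $\phi^{*}$ inherits what is actually used in its proof---essential smoothness (equivalently, essential strict convexity of $\phi$) and the full domain/range properties of the reference conjugate and gradient---all of which \textbf{A1}--\textbf{A2} supply for $\phi$ and transfer to $\phi^{*}$ via \cite[Theorem~26.5]{Rock70} (this is immediate when $\dom\phi=\RR^{n}$). Everything after that is the mechanical substitution $y=\nphi(x)$ together with $\nabla\phi^{*}=(\nphi)^{-1}$; no new estimate is required.
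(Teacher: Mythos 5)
Your argument is essentially the paper's own proof: the paper likewise derives the identity by noting $\ran\nabla\phi=\RR^n$, rewriting the conclusion of Proposition~\ref{t:prox:aniso} as an identity on all of $\RR^n$, and then applying it with $(\phi,f)$ replaced by $(\phi^*,f^*)$ --- which is exactly your substitution $y=\nabla\phi(x)$ read in the other order. The hypothesis-transfer caveat you flag (whether $\phi^*$ inherits everything Proposition~\ref{t:prox:aniso} actually uses, e.g.\ $1$-coercivity of the reference function when $\dom\phi\neq\RR^n$) is left equally implicit in the paper, so there is no substantive divergence.
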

\begin{proof} In view of $\ran\nabla\phi=\RR^n$, Proposition~\ref{t:prox:aniso}
gives
$(\forall y\in\RR^n)\ y=\nabla\phi \big(\lprox{\phi}{f}{\lambda}(\nabla\phi^*(y))\big)+\lambda\aprox{1/\lambda\timess\phi^*}{f^*}
\big(y/\lambda\big).$
The result follows by using this identity for $f^*$ and $\phi^*$.
\end{proof}
\begin{remark} When $\lambda=1$, Corollary~\ref{c:moreau:p} recovers \cite[Theorem 1(ii)]{com13}.
\end{remark}

\section{The Bregman proximal average}\label{s:clarkemor}
Let $f_{1}, f_{2}:\RR^n\rightarrow\RX$. In the rest of the paper our standing assumptions on
$f_{1}, f_{2}$, $\alpha$ and $\lambda$ are:
\begin{itemize}
\item[\bfseries A3] Both $f_{1}$ and $ f_{2}$ are proper lower semicontinuous and
prox-bounded with thresholds $\lambda_{f_{1}}, \lambda_{f_{2}}>0$
respectively, and $\bl :=\min\{\lambda_{f_{1}},\lambda_{f_{2}}\}$.
\item[\bfseries A4]
$\dom f_{i}\cap \dom\phi\neq\varnothing$ for $i=1, 2$, $\alpha\in [0,1]$, and $\lambda\in ]0, \bl[$.
\end{itemize}
We define the \emph{$\alpha$-weighted Bregman proximal average with parameter $\lambda$} of $f_{1}, f_{2}$ with respect to the Legendre function $\phi$
 by
\begin{equation}\label{d:pave}
\averagef :=\left[\alpha\left(f_{1}+\frac{1}{\lambda}\phi\right)^*+(1-\alpha)\left(f_{2}+\frac{1}{\lambda}\phi
\right)^*\right]^*-\frac{1}{\lambda}\phi,
\end{equation}
with the convention that $+\infty-(+\infty)=+\infty$, $+\infty -r=+\infty$ for every $r\in\RR$.
As we shall see later that
$\dom \left[\alpha\left(f_{1}+\frac{1}{\lambda}\phi\right)^*+(1-\alpha)\left(f_{2}+\frac{1}{\lambda}\phi
\right)^*\right]^*\subseteq \dom\phi,$
so \eqref{d:pave} means that
\begin{equation}
\averagef(x)=\begin{cases}
\left[\alpha\left(f_{1}+\frac{1}{\lambda}\phi\right)^*+(1-\alpha)\left(f_{2}+\frac{1}{\lambda}\phi
\right)^*\right]^*(x)-\frac{1}{\lambda}\phi(x), &\text{ if $x\in\dom\phi$;}\\
+\infty, &
\text{ if $x\not\in\dom\phi$.}
\end{cases}
\end{equation}
Therefore, it is possible that $\averagef(x)=+\infty$ when $x\in\dom\phi$.


\begin{lemma}\label{l:lower:cond}
\begin{enumerate}
\item\label{i:u} The function $\averagef$ is always lower semicontinuous on $U$.
\item\label{i:dphi} If $\dom\phi$ is closed, and $\phi$ is relatively continuous on $\dom\phi$, then
$\averagef$ is lower semicontinuous on $\RR^n$. Suppose one of the following holds:
\begin{enumerate}
\item\label{i:poly1} $\dom\phi$ is polyhedral.
\item\label{i:poly2} $\dom\phi$ is locally simplicial.
\end{enumerate}
Then $\phi$ is relatively continuous on $\dom\phi$.
\end{enumerate}
\end{lemma}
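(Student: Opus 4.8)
The plan is to reduce everything to the known lower semicontinuity of Fenchel conjugates together with continuity properties of $\phi$ on $\dom\phi$. Write $h_\lambda := \big[\alpha(f_1+\tfrac1\lambda\phi)^*+(1-\alpha)(f_2+\tfrac1\lambda\phi)^*\big]^*$, so that $\averagef = h_\lambda - \tfrac1\lambda\phi$ with the stated convention. By Proposition~\ref{p:bound}\ref{i:boundl}, for $\lambda\in\,]0,\bl[$ each $f_i+\tfrac1\lambda\phi$ is bounded below (indeed $1$-coercive), hence $(f_i+\tfrac1\lambda\phi)^*$ is proper; being a conjugate it also lies in $\GF$. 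Therefore the convex combination $\alpha(f_1+\tfrac1\lambda\phi)^*+(1-\alpha)(f_2+\tfrac1\lambda\phi)^*$ is proper convex (its domain contains $\dom(f_1+\tfrac1\lambda\phi)^*\cap\dom(f_2+\tfrac1\lambda\phi)^*$, which is nonempty because each conjugate is finite near $0$ by $1$-coercivity), and $h_\lambda$, as its conjugate, is proper lower semicontinuous and convex on all of $\RR^n$. Also I will record (this is the containment asserted in the paper right after \eqref{d:pave}, to be proved in the same spirit) that $\dom h_\lambda\subseteq\dom\phi$: since each $(f_i+\tfrac1\lambda\phi)^*$ is $1$-coercive — being the conjugate of a function with full domain — the combination is $1$-coercive as well, so $h_\lambda$ has full domain... more carefully, one uses that $\dom h_\lambda\subseteq\conv(\dom(f_1+\tfrac1\lambda\phi)\cup\dom(f_2+\tfrac1\lambda\phi))$-type reasoning; in any case $\dom h_\lambda\subseteq\overline{\dom}\,\phi$, and on $\dom\phi$ the formula for $\averagef$ is literally $h_\lambda-\tfrac1\lambda\phi$.

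For part \ref{i:u}: on $U=\intdom\phi$ the function $\phi$ is continuous (it is convex and finite on the open set $U$), hence $-\tfrac1\lambda\phi$ is continuous on $U$, and $h_\lambda$ is lower semicontinuous on $\RR^n$; the sum of a lower semicontinuous function and a continuous (finite) function is lower semicontinuous, so $\averagef = h_\lambda-\tfrac1\lambda\phi$ is lower semicontinuous at every point of $U$.

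For part \ref{i:dphi}, first statement: assume $\dom\phi$ is closed and $\phi$ is relatively continuous on $\dom\phi$. I must check lower semicontinuity at an arbitrary $\bar x\in\RR^n$. If $\bar x\notin\dom\phi$, then since $\dom\phi$ is closed there is a neighbourhood of $\bar x$ disjoint from $\dom\phi$, on which $\averagef\equiv+\infty$, so lower semicontinuity is automatic. If $\bar x\in\dom\phi$: take $x_k\to\bar x$; I want $\liminf_k\averagef(x_k)\ge\averagef(\bar x)$. Discard the $x_k\notin\dom\phi$ (those contribute $+\infty$). For the remaining $x_k\in\dom\phi$ we have $\averagef(x_k)=h_\lambda(x_k)-\tfrac1\lambda\phi(x_k)$; by relative continuity of $\phi$ on $\dom\phi$, $\phi(x_k)\to\phi(\bar x)$, and by lower semicontinuity of $h_\lambda$, $\liminf_k h_\lambda(x_k)\ge h_\lambda(\bar x)$; adding gives $\liminf_k\averagef(x_k)\ge h_\lambda(\bar x)-\tfrac1\lambda\phi(\bar x)=\averagef(\bar x)$ (using $\dom h_\lambda\subseteq\dom\phi$ so $h_\lambda(\bar x)$ is paired with the finite value $\phi(\bar x)$ without conflicting with the $+\infty-\infty$ convention). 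This handles the general case. For the last implications, \ref{i:poly1}$\Rightarrow$ and \ref{i:poly2}$\Rightarrow$ relative continuity: a convex function is continuous relative to any polyhedral subset of its domain, and more generally relative to any locally simplicial subset — this is the classical Gale–Klee–Rockafellar result, \cite[Theorem~10.2]{Rock70} for the locally simplicial case (which subsumes the polyhedral case since polyhedral sets are locally simplicial). So I invoke that theorem directly; note polyhedral $\dom\phi$ is in particular closed, and a locally simplicial set need not be closed in general, but here we only invoke the relative-continuity conclusion, so no closedness is needed for that final sentence.

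The main obstacle is the bookkeeping around the $+\infty-\infty$ convention, i.e.\ making sure the identity $\averagef = h_\lambda-\tfrac1\lambda\phi$ on $\dom\phi$ is used consistently and that $h_\lambda$ cannot be $+\infty$ exactly where $\phi$ is also $+\infty$ in a way that would break the lsc argument; this is controlled by the inclusion $\dom h_\lambda\subseteq\dom\phi$, which in turn rests on the $1$-coercivity of the conjugates $(f_i+\tfrac1\lambda\phi)^*$ coming from Proposition~\ref{p:bound}. Everything else is a routine combination of "conjugates are lsc," "finite convex functions are continuous on open sets / relatively continuous on polyhedral or locally simplicial sets," and "lsc $+$ continuous finite $=$ lsc."
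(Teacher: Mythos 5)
Your argument is essentially the paper's own proof: part \ref{i:u} follows from lower semicontinuity of the conjugate plus continuity of $\phi$ on the open set $U$; part \ref{i:dphi} splits into the trivially handled open complement of the closed set $\dom\phi$ and points of $\dom\phi$, where one combines $\liminf$ of the lsc conjugate with the limit of the relatively continuous $\phi$; and the final claims are exactly the cited Rockafellar results (\cite[Theorem~10.2]{Rock70}, \cite[Theorem~2.35]{Rock98}). The only shaky spot is your aside attempting to derive $\dom h_\lambda\subseteq\dom\phi$ from $1$-coercivity of the conjugates (that reasoning is not right as stated — the correct route is the infimal-convolution representation of $h_\lambda$, giving $\dom h_\lambda=\alpha\conv(\dom f_1\cap\dom\phi)+(1-\alpha)\conv(\dom f_2\cap\dom\phi)\subseteq\dom\phi$), but this inclusion is likewise only asserted and deferred in the paper, so it does not affect the correctness of your main argument.
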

\begin{proof}
\ref{i:u}: This is because that
$\phi$ is continuous on $U$ and
$\left[\alpha\left(f_{1}+\frac{1}{\lambda}\phi\right)^*+(1-\alpha)\left(f_{2}+\frac{1}{\lambda}\phi
\right)^*\right]^*$ is lower semicontinuous on $U$.

\ref{i:dphi}: On the open set $\RR^n\setminus\dom\phi$, $\averagef\equiv +\infty$, so
$\averagef$ is lower semicontinuous
on $\RR^n\setminus\dom\phi$. Now let $x_{0}\in \dom\phi$. Then
\begin{align}
& \liminf_{x\rightarrow x_{0}}\averagef(x)
=\liminf_{x\rightarrow x_{0}, x\in\dom\phi}\averagef(x)
\\
&=\liminf_{x\rightarrow x_{0},x\in\dom\phi}\left[\alpha\left(f_{1}+\frac{1}{\lambda}\phi\right)^*
+(1-\alpha)\left(f_{2}+\frac{1}{\lambda}\phi
\right)^*\right]^*(x)-\lim_{x\rightarrow x_{0},x\in\dom\phi}\frac{1}{\lambda}\phi(x)\\
&
\geq \left[\alpha\left(f_{1}+\frac{1}{\lambda}\phi\right)^*
+(1-\alpha)\left(f_{2}+\frac{1}{\lambda}\phi
\right)^*\right]^*(x_{0})-\frac{1}{\lambda}\phi(x_{0})=\averagef(x_{0}).
\end{align}
Since $x_{0}\in\dom\phi$ was arbitrary, $f$ is lower semicontinuous on $\dom\phi$.
Altogether, $\averagef$ is lower semicontinuou on $\RR^n$.
Under \ref{i:poly1} or \ref{i:poly2}, the relative continuity
of $\phi$ on $\dom\phi$ follows from \cite[Theorem 10.2]{Rock70} or
\cite[Theorem 2.35]{Rock98}.
\end{proof}

\begin{lemma}\label{l:simple}
 The following holds:
$$\frac{1}{\lambda}\left[\alpha (\lambda f_{1}+\phi)^*+(1-\alpha)(\lambda f_{2}+\phi)^*\right]^*=
\left[\alpha\left(f_{1}+\frac{1}{\lambda}\phi\right)^*+(1-\alpha)\left(f_{2}+
\frac{1}{\lambda}\phi\right)^*\right]^*.$$
\end{lemma}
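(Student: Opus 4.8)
The plan is to prove the identity by tracking how the epi-multiplication operation \eqref{e:epimul} interacts with the Fenchel conjugate. The starting point is the scaling relation for conjugates: for any proper function $g$ and any $\lambda>0$, one has $(\lambda\timess g)^* = \lambda g^*$ and, equivalently, $g^*(\cdot/\lambda)$ is the conjugate of $\lambda g$ up to the appropriate scaling. Concretely, first I would record the two elementary facts that for $\lambda > 0$: (a) $(\lambda f_i + \phi)^* = \big(\lambda(f_i + \tfrac{1}{\lambda}\phi)\big)^* = (f_i + \tfrac{1}{\lambda}\phi)^*(\cdot/\lambda)$, which is just the definition of the conjugate with the substitution $y \mapsto y/\lambda$; and (b) for any proper function $h$, $(\lambda\timess h)^* = \lambda h^*$, so that $\big(h(\cdot/\lambda)\big)^* = \tfrac{1}{\lambda}\timess\cdots$ — more precisely I will use that if $k(y) = h(y/\lambda)$ then $k^*(x) = \lambda h^*(\lambda x)$.

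The core computation then proceeds as follows. Let $A := \alpha(\lambda f_1 + \phi)^* + (1-\alpha)(\lambda f_2 + \phi)^*$ and $B := \alpha(f_1 + \tfrac{1}{\lambda}\phi)^* + (1-\alpha)(f_2 + \tfrac{1}{\lambda}\phi)^*$. Using fact (a) applied to each of the two terms, $A(y) = \alpha(f_1 + \tfrac{1}{\lambda}\phi)^*(y/\lambda) + (1-\alpha)(f_2 + \tfrac{1}{\lambda}\phi)^*(y/\lambda) = B(y/\lambda)$. Thus $A = B(\cdot/\lambda)$. Now take conjugates: by fact (b) with $h = B$, we get $A^*(x) = \lambda B^*(\lambda x)$. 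Unwinding, $\tfrac{1}{\lambda}A^*(x) = B^*(\lambda x)$ — but this is not quite the claimed identity, so I need to be careful about exactly which scaling appears. Let me instead compute $A^*$ directly from $A = B(\cdot/\lambda)$: $A^*(x) = \sup_y(\langle x,y\rangle - B(y/\lambda)) = \sup_z(\langle x,\lambda z\rangle - B(z)) = \sup_z(\langle \lambda x, z\rangle - B(z)) = B^*(\lambda x)$. Hmm, that gives $A^* = B^*(\lambda\,\cdot)$, so $\tfrac{1}{\lambda}A^* = \tfrac{1}{\lambda}B^*(\lambda\,\cdot)$, which again is not literally $B^*$. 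This discrepancy tells me I have mis-tracked a factor, and resolving it correctly is the one genuine subtlety.

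The resolution is that fact (a) must be applied with the $\lambda$-scaling on the \emph{argument} handled via epi-multiplication on the conjugate side: the correct statement is $(\lambda f_i + \phi)^* = \tfrac{1}{\lambda}\timess (f_i + \tfrac1\lambda\phi)^* = \tfrac{1}{\lambda}\big(f_i+\tfrac1\lambda\phi\big)^*(\lambda\,\cdot)$, cf.\ \cite[Theorem 16.4]{Rock70}. Then $A = \tfrac{1}{\lambda}\big[\alpha(f_1+\tfrac1\lambda\phi)^*(\lambda\,\cdot) + (1-\alpha)(f_2+\tfrac1\lambda\phi)^*(\lambda\,\cdot)\big] = \tfrac{1}{\lambda}B(\lambda\,\cdot)$, i.e.\ $A = \tfrac{1}{\lambda}\timess B = (\lambda\timess B)$-type scaling, and since $(\tfrac1\lambda\timess B)^* = \tfrac1\lambda\timess B^{*}$ composed appropriately — concretely $\big(\tfrac1\lambda B(\lambda\,\cdot)\big)^*(x) = \sup_y\big(\langle x,y\rangle - \tfrac1\lambda B(\lambda y)\big) = \tfrac1\lambda\sup_z\big(\langle x, z\rangle - B(z)\big) = \tfrac1\lambda B^*(x)$. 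Therefore $A^* = \tfrac1\lambda B^*$, which is exactly the claimed identity $\tfrac{1}{\lambda}A^* = \tfrac{1}{\lambda^2}\cdots$ — no: it gives $A^* = \tfrac1\lambda B^*$, hence $\tfrac1\lambda[\alpha(\lambda f_1+\phi)^* + (1-\alpha)(\lambda f_2+\phi)^*]^* = \tfrac1\lambda A^* \cdot \lambda$? I will need to line up the $\tfrac1\lambda$ on the left of the lemma with the $\tfrac1\lambda$ coming out of the conjugate so they cancel correctly, yielding precisely the right-hand side $B^*$. The main obstacle, then, is purely bookkeeping: getting the single factor of $\lambda$ in exactly the right place, which I will do by invoking the clean citable statement $(\lambda f+\phi)^* = \big(\lambda(f+\tfrac1\lambda\phi)\big)^*$ together with the epi-multiplication conjugacy rule from \cite[Theorem 16.4]{Rock70} and \eqref{e:epimul}, applied termwise and then once more to the outer conjugate. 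Everything else is a one-line substitution in the supremum defining the conjugate.
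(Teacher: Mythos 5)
Your overall strategy --- reduce everything to the conjugate scaling rule and track the single factor of $\lambda$ --- is the right one and is essentially the paper's, but your proof never closes, because the scaling rule is stated incorrectly both times you write it down, and you end by conceding that the factors do not line up. The correct rule is $(\lambda g)^*=\lambda g^*(\cdot/\lambda)=\lambda\timess g^*$: indeed $(\lambda g)^*(y)=\sup_x(\scal{y}{x}-\lambda g(x))=\lambda\sup_x(\scal{y/\lambda}{x}-g(x))=\lambda g^*(y/\lambda)$. Your first version of fact (a), $(\lambda f_i+\phi)^*=(f_i+\tfrac1\lambda\phi)^*(\cdot/\lambda)$, drops the prefactor $\lambda$; your ``resolution,'' $(\lambda f_i+\phi)^*=\tfrac1\lambda(f_i+\tfrac1\lambda\phi)^*(\lambda\cdot)$, applies $\tfrac1\lambda\timess$ to the conjugate where it should be $\lambda\timess$. (Test with $n=1$, $f_i=0$, $\phi=\tfrac12|\cdot|^2$: the left side is $\tfrac12|y|^2$, your first right side is $\tfrac{1}{2\lambda}|y|^2$, and your second is $\tfrac{\lambda^2}{2}|y|^2$.) Unsurprisingly, each wrong version leads you to a conclusion ($A^*=B^*(\lambda\cdot)$, then $\tfrac1\lambda A^*=\tfrac1{\lambda^2}B^*$) that is not the lemma, and the proposal stops with the bookkeeping unresolved.

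With the correct rule the argument is two lines: $(\lambda f_i+\phi)^*=\big(\lambda(f_i+\tfrac1\lambda\phi)\big)^*=\lambda(f_i+\tfrac1\lambda\phi)^*(\cdot/\lambda)$, so in your notation $A=\lambda B(\cdot/\lambda)=\lambda\timess B$, whence $A^*=(\lambda\timess B)^*=\lambda B^*$ and $\tfrac1\lambda A^*=B^*$, as claimed. The paper runs the same computation in the opposite direction, using $\tfrac1\lambda C^*=\big[\tfrac1\lambda C(\lambda\cdot)\big]^*$ on the outer conjugate and $\tfrac1\lambda(\lambda f_i+\phi)^*(\lambda\cdot)=\big(\tfrac{\lambda f_i+\phi}{\lambda}\big)^*=(f_i+\tfrac1\lambda\phi)^*$ termwise. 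So the gap is not conceptual, but the proof as submitted does not establish the identity: the one fact the whole argument rests on is stated falsely in both of your attempts, and the final cancellation is promised rather than performed.
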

\begin{proof}
Indeed, this is a simple calculation:
\begin{align}
&\frac{1}{\lambda}\left[\alpha (\lambda f_{1}+\phi)^*
+(1-\alpha)(\lambda f_{2}+\phi)^*\right]^*=\nonumber \\
& \left[\frac{1}{\lambda}\bigg(\alpha(\lambda f_{1}+\phi)^*+(1-\alpha)(\lambda f_{2}+
\phi)^*\bigg)(\lambda\cdot)\right]^*
=\left[\alpha\frac{1}{\lambda}\left(\lambda f_{1}+\phi\right)^*(\lambda\cdot)
+(1-\alpha)\frac{1}{\lambda}\left(\lambda f_{2}+
\phi\right)^*(\lambda\cdot)\right]^*\nonumber\\
& =\left[\alpha\left(\frac{\lambda f_{1}+\phi}{\lambda}\right)^*
+(1-\alpha)\left(\frac{\lambda f_{2}+\phi}{\lambda}\right)^*\right]^*
=\left[\alpha\left(f_{1}+\frac{1}{\lambda}\phi\right)^*+(1-\alpha)\left(f_{2}+
\frac{1}{\lambda}\phi\right)^*\right]^*.\nonumber
\end{align}
\end{proof}

Because of Lemma~\ref{l:lower:cond}, in the rest of the paper
our additional standing assumption on $\phi$ is:
\begin{itemize}
\item[\bfseries A5]
$\dom\phi$ is closed, $\phi$ is relatively continuous on $\dom\phi$, and
$\phi$ is twice continuously differentiable on $\IDD$ with
$\Hess \phi(u)$ being positive definite for every $u\in U$.
\end{itemize}
We are now ready for the main result of this section.

\begin{theorem}[Bregman proximal average]\label{complete}
Suppose that {\bfseries A1--A5} hold.
Then the following hold:
\begin{enumerate}
\item\label{i:p:exp}
$\averagef=\left[\alpha\timess \conv \left(f_{1}+\frac{1}{\lambda}\phi\right)\right]\Box \left[
(1-\alpha) \timess \conv \left(f_{2}+\frac{1}{\lambda}\phi\right)\right]-\frac{1}{\lambda}\phi,$
where the infimal convolution $\Box$ is exact.
\item \label{i:f:dom} $\dom \averagef=\alpha\conv(\dom f_{1}\cap\dom\phi)+(1-\alpha)\conv(\dom f_{2}\cap\dom\phi)
\subseteq\dom\phi.$
\item\label{i:lower} $\averagef$ is proper lower semicontinuous on $\RR^n$.
\item \label{i:breg:conv} $\lambda\averagef+\phi\in\GF$.
\item\label{i:prox:b}
 The function $\averagef$ is prox-bounded below with its prox-bound
 $\lambda_{f}\geq\bl$.

\item\label{i:env:sum} $\lenv{\phi}{\averagef}{\lambda}=\alpha \lenv{\phi}{f_{1}}{\lambda}+(1-\alpha) \lenv{\phi}{f_{2}}{\lambda}$.
\item\label{i:prox:sum}
 $(\forall x\in U)\ \lprox{\phi}{\averagef}{\lambda}(x)=\alpha \conv(\lprox{\phi}{f_{1}}{\lambda}(x))+(1-\alpha)\conv(\lprox{\phi}{f_{2}}{\lambda}(x))$.
 \item\label{i:prox:h} When $\alpha=0$, $\averagef=\lhul{\phi}{f_{2}}{\lambda}$;
 when $\alpha=1$, $\averagef=\lhul{\phi}{f_{1}}{\lambda}$; when $f_{1}=f_{2}=f$,
 $\averagef=\lhul{\phi}{f}{\lambda}$.
\end{enumerate}
\end{theorem}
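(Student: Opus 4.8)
The plan is to route everything through the single convex function $G:=\big[\alpha(f_{1}+\tfrac{1}{\lambda}\phi)^{*}+(1-\alpha)(f_{2}+\tfrac{1}{\lambda}\phi)^{*}\big]^{*}$, for which $\averagef=G-\tfrac{1}{\lambda}\phi$ holds by definition, and then to feed the resulting identities into Fact~\ref{f:env:conj}, Proposition~\ref{t:grad} and Proposition~\ref{p:phi:hull}. Put $g_{i}:=f_{i}+\tfrac{1}{\lambda}\phi$. The preliminary facts I would record first are: $g_{i}$ is bounded below (Proposition~\ref{p:bound}\ref{i:boundl}, since $\lambda<\bl\le\lambda_{f_{i}}$), and in fact splitting off a little of $\tfrac{1}{\mu}\phi$ for some $\mu\in\,]\lambda,\bl[$ yields $g_{i}\ge\delta\phi-c_{i}$ for suitable $\delta>0$, $c_{i}\in\RR$; hence $g_{i}$ is $1$-coercive and level-bounded, $g_{i}^{*}$ is real-valued on $\RR^{n}$ (A2), and $\conv g_{i}\ge\delta\phi-c_{i}$ is also $1$-coercive. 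The one point that needs genuine care is that $\conv g_{i}$ is lower semicontinuous, i.e.\ $\conv g_{i}=g_{i}^{**}$: this I would obtain from a Carath\'eodory-and-compactness argument using that $g_{i}$ is level-bounded and lsc (so the infimum defining $\conv g_{i}(x)$ is attained). I expect this lower-semicontinuity step to be the main obstacle; everything else is essentially bookkeeping with conjugates, the most computational being \ref{i:prox:b}.

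For \ref{i:p:exp}: when $\alpha\in\,]0,1[$ both $\alpha g_{1}^{*}$ and $(1-\alpha)g_{2}^{*}$ are real-valued convex functions, so by the conjugate-of-a-sum theorem with exact infimal convolution \cite[Theorem~16.4]{Rock70}, $G=(\alpha g_{1}^{*})^{*}\Box((1-\alpha)g_{2}^{*})^{*}=\big[\alpha\timess\conv g_{1}\big]\Box\big[(1-\alpha)\timess\conv g_{2}\big]$; the cases $\alpha\in\{0,1\}$ are immediate from $0\timess h=\iota_{\{0\}}$ and $\iota_{\{0\}}\Box h=h$. Taking effective domains in this identity, together with $\dom(p\Box q)=\dom p+\dom q$, $\dom(\alpha\timess h)=\alpha\dom h$, and $\dom\conv g_{i}=\conv\dom g_{i}=\conv(\dom f_{i}\cap\dom\phi)$, gives the domain formula of \ref{i:f:dom}; convexity of $\dom\phi$ forces that set into $\dom\phi$, so $\dom\averagef=\dom G\cap\dom\phi=\dom G$, which also confirms the inclusion used in \eqref{d:pave}. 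Consequently $\lambda\averagef+\phi=\lambda G$; since $G$ is the conjugate of the proper lsc convex function $\alpha g_{1}^{*}+(1-\alpha)g_{2}^{*}$ we get $G\in\GF$, hence \ref{i:breg:conv}. Properness of $\averagef$ then follows ($\dom\averagef\neq\varnothing$ by A4, and $\averagef>-\infty$ because $G\in\GF$ and $\phi$ is finite on $\dom\phi$), and lower semicontinuity on all of $\RR^{n}$ is exactly Lemma~\ref{l:lower:cond}\ref{i:dphi}, which applies under A5; this is \ref{i:lower}.

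For \ref{i:prox:b} I would invoke Proposition~\ref{p:bound}\ref{i:same}: it suffices to show $\inf(\averagef+\tfrac{1}{\mu}\phi)>-\infty$ for every $\mu\in\,]0,\bl[$. Writing $\averagef+\tfrac{1}{\mu}\phi=G+(\tfrac{1}{\mu}-\tfrac{1}{\lambda})\phi$, the case $\mu\le\lambda$ is clear since $G$ and $\phi$ are bounded below. For $\mu\in\,]\lambda,\bl[$ set $\delta:=\tfrac{1}{\lambda}-\tfrac{1}{\mu}$ and pick $\varepsilon$ with $\lambda\delta\le\varepsilon<1-\lambda/\bl$ (possible since $\lambda\delta=1-\lambda/\mu<1-\lambda/\bl$); Proposition~\ref{p:bound} gives $\lambda f_{i}+\phi\ge\varepsilon\phi-c_{i}$, hence $\alpha(\lambda f_{1}+\phi)^{*}+(1-\alpha)(\lambda f_{2}+\phi)^{*}\le\varepsilon\timess\phi^{*}+c$ with $c:=\alpha c_{1}+(1-\alpha)c_{2}$, and conjugating (using Lemma~\ref{l:simple}, which identifies $\lambda G$ with $[\alpha(\lambda f_{1}+\phi)^{*}+(1-\alpha)(\lambda f_{2}+\phi)^{*}]^{*}$) yields $\lambda G\ge\varepsilon\phi-c$. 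Therefore $\averagef+\tfrac{1}{\mu}\phi=G-\delta\phi\ge(\tfrac{\varepsilon}{\lambda}-\delta)\phi-\tfrac{c}{\lambda}$ is bounded below, so $\lambda_{f}\ge\bl$.

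The remaining items come from conjugating the identity of \ref{i:p:exp}. By Lemma~\ref{l:simple} and $\lambda\averagef+\phi=\lambda G$ we get $(\lambda\averagef+\phi)^{*}=\alpha(\lambda f_{1}+\phi)^{*}+(1-\alpha)(\lambda f_{2}+\phi)^{*}$. Since $\lambda<\bl\le\lambda_{f}$ by \ref{i:prox:b}, Fact~\ref{f:env:conj}\ref{i:env:def} applies to $\averagef$; substituting the last identity and using $\alpha+(1-\alpha)=1$ to cancel the $\phi^{*}$-terms (everything is real-valued, as $\dom\phi^{*}=\dom(\lambda f_{i}+\phi)^{*}=\RR^{n}$) gives $\lenv{\phi}{\averagef}{\lambda}=\alpha\lenv{\phi}{f_{1}}{\lambda}+(1-\alpha)\lenv{\phi}{f_{2}}{\lambda}$, which is \ref{i:env:sum}. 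For \ref{i:prox:sum}, A5 makes $\nabla^{2}\phi$ invertible on $U$, so Proposition~\ref{t:grad} gives $\partial(\lambda\averagef+\phi)^{*}=\lprox{\phi}{\averagef}{\lambda}\circ\nabla\phi^{*}$ on $U$ (the $\conv$ is redundant by \ref{i:breg:conv} and Proposition~\ref{p:prox:e}\ref{i:convexf}), while applying the subdifferential sum rule to the real-valued convex functions $(\lambda f_{i}+\phi)^{*}$ and Proposition~\ref{t:grad} once more gives $\partial[\alpha(\lambda f_{1}+\phi)^{*}+(1-\alpha)(\lambda f_{2}+\phi)^{*}]=\alpha\,\conv\lprox{\phi}{f_{1}}{\lambda}\circ\nabla\phi^{*}+(1-\alpha)\,\conv\lprox{\phi}{f_{2}}{\lambda}\circ\nabla\phi^{*}$; composing both with $\nabla\phi$ and using $\nabla\phi^{*}\circ\nabla\phi=\Id$ and $\nabla\phi(U)=\RR^{n}$ yields \ref{i:prox:sum}. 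Finally \ref{i:prox:h} is immediate: at $\alpha\in\{0,1\}$ or $f_{1}=f_{2}$ the definition \eqref{d:pave} collapses to $(f_{i}+\tfrac{1}{\lambda}\phi)^{**}-\tfrac{1}{\lambda}\phi$, which equals $\lhul{\phi}{f_{i}}{\lambda}$ by Proposition~\ref{p:phi:hull}\ref{i:h:c}.
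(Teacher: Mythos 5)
Your proposal is correct, and items \ref{i:p:exp}--\ref{i:breg:conv}, \ref{i:env:sum} and \ref{i:prox:h} run essentially parallel to the paper's proof (the paper simply cites \cite[Lemma~3.3]{ben96} for $\big(f_i+\tfrac{1}{\lambda}\phi\big)^{**}=\conv\big(f_i+\tfrac{1}{\lambda}\phi\big)$ where you sketch the Carath\'eodory/coercivity argument; that sketch is the standard proof of the cited lemma and is sound). The genuinely different steps are \ref{i:prox:b} and \ref{i:prox:sum}. For \ref{i:prox:b} the paper stays on the primal side: it bounds $f_i+\tfrac{1}{\lambda}\phi$ below by $c+\big(\tfrac{1}{\lambda}-\tfrac{1}{\tl}\big)\phi$, pushes this through the exact infimal convolution of \ref{i:p:exp}, and uses convexity of $\phi$ to evaluate the resulting inf-convolution of scaled copies of $\phi$; you instead dualize the minorant $\lambda f_i+\phi\ge\varepsilon\phi-c_i$ to a majorant of the conjugates and conjugate back, which gives the same lower bound $\averagef+\tfrac{1}{\mu}\phi\ge$ const with less bookkeeping --- both are valid, and your choice of $\varepsilon\in[\lambda\delta,\,1-\lambda/\bl[$ is exactly what is needed. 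For \ref{i:prox:sum} the paper differentiates the envelope identity \ref{i:env:sum} via the Clarke subdifferential sum rule and Fact~\ref{f:derivative}, then cancels $\nabla^2\phi(x)$; you instead apply the ordinary convex subdifferential sum rule to the real-valued conjugates $(\lambda f_i+\phi)^*$ and invoke Proposition~\ref{t:grad} on each side of $(\lambda\averagef+\phi)^*=\alpha(\lambda f_1+\phi)^*+(1-\alpha)(\lambda f_2+\phi)^*$. Since Proposition~\ref{t:grad} is itself proved from Fact~\ref{f:derivative}, the two routes have the same logical content, but yours confines the nonsmooth (Clarke) calculus to the already-established Proposition~\ref{t:grad} and keeps the new argument entirely within convex subdifferential calculus; the only hypotheses to check are the invertibility of $\Hess\phi$ (supplied by A5) and $\lambda<\lambda_{\averagef}$ (supplied by your \ref{i:prox:b}), both of which you do check.
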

\begin{proof}
\ref{i:p:exp}:
Since $\dom (f_{1}+1/\lambda\phi)^*=\RR^n=\dom(f_{2}+1/\lambda\phi)^*$, by \cite[Theorem 16.4]{Rock70},
\begin{equation}\label{e:another}
\averagef=\left[\alpha\left(f_{1}+\frac{1}{\lambda}\phi\right)^{**}\left(\frac{\cdot}{\alpha}\right)\right]\Box
\left[(1-\alpha)\left(f_{2}+\frac{1}{\lambda}\phi\right)^{**}\left(\frac{\cdot}{(1-\alpha)}\right)\right]
-\frac{1}{\lambda}\phi,
\end{equation}
and the infimal convolution $\Box$ is exact.
Because $f_{1}+1/\lambda\phi$ and $f_{2}+1/\lambda\phi$ are
$1$-coercive by Proposition~\ref{p:phi:hull},
\cite[Lemma 3.3]{ben96}
gives
$$\left(f_{1}+\frac{1}{\lambda}\phi\right)^{**}=\conv\left(f_{1}+\frac{1}{\lambda}\phi\right),
\quad \left(f_{2}+\frac{1}{\lambda}\phi\right)^{**}=\conv\left(f_{2}+\frac{1}{\lambda}\phi\right).$$
Hence \ref{i:p:exp} holds.

\ref{i:f:dom}: Because
$\dom \left[\conv\left(f_{i}+\frac{1}{\lambda}\phi\right)\right]=\conv(\dom f_{i} \cap\dom\phi) \text{ with $i=1, 2$},$
 by \cite[Proposition 12.6(ii)]{BC} and \ref{i:p:exp} we obtain
\begin{align}
\dom \averagef &=[\alpha \conv(\dom f_{1}\cap\dom\phi)+(1-\alpha) \conv(\dom f_{2}\cap\dom\phi)]\cap\dom\phi\\
& =\alpha \conv(\dom f_{1}\cap\dom\phi)+(1-\alpha) \conv(\dom f_{2}\cap\dom\phi),
\end{align}
where the second \enquote{=} follows from the convexity of $\dom \phi$.

\ref{i:lower}: By \ref{i:f:dom}, $\dom \averagef\neq\varnothing$; by \ref{i:p:exp},
$\averagef>-\infty$; by Lemma~\ref{l:lower:cond}\ref{i:dphi}, $\averagef$ lower semicontinuous. Therefore,
\ref{i:lower} is verified.

\ref{i:breg:conv}: By \eqref{d:pave} and \ref{i:f:dom}, we have
$$\lambda \averagef+\phi=\lambda \left[\alpha\left(f_{1}+\frac{1}{\lambda}\phi\right)^*+
(1-\alpha)\left(f_{2}+\frac{1}{\lambda}\phi
\right)^*\right]^*,$$
so $\lambda \averagef+\phi\in\GF$.

\ref{i:prox:b}:
Let $0<\lambda<\tl<\bl$. By Proposition~\ref{p:bound}, there exists
$c\in\RR$ such that
$f_{i}+\frac{1}{\tl}\phi\geq c$ for $i=1,2$. This implies
\begin{align}
f_{i}+\frac{1}{\lambda}\phi &=f_{i}+\frac{1}{\tl}\phi+\bigg(\frac{1}{\lambda}-\frac{1}{\tl}\bigg)
\phi \geq c+\bigg(\frac{1}{\lambda}-\frac{1}{\tl}\bigg)\phi,
\end{align}
so
$\bigg(f_{i}+\frac{1}{\lambda}\phi)\bigg)^{**}\geq c+\bigg(\frac{1}{\lambda}-\frac{1}{\tl}\bigg)\phi$
because $\phi\in \GF$.
In view of \eqref{e:another}, $\forall x\in\dom\phi$ we have $\averagef(x)$
\begin{align}
&\geq \left[\alpha\bigg(c+\bigg(\frac{1}{\lambda}-\frac{1}{\tl}\bigg)\phi\bigg)
\bigg(\frac{\cdot}{\alpha}\bigg)\right]\Box\left[
(1-\alpha)\bigg(c+\bigg(\frac{1}{\lambda}-\frac{1}{\tl}\bigg)\phi\bigg)
\bigg(\frac{\cdot}{1-\alpha}\bigg)\right](x)-\frac{1}{\lambda}\phi(x)\\
&=\inf_{u\in\RR^n}\bigg[c+\alpha\bigg(\frac{1}{\lambda}-\frac{1}{\tl}\bigg)\phi
\bigg(\frac{u}{\alpha}\bigg)+
(1-\alpha)\bigg(\frac{1}{\lambda}-\frac{1}{\tl}\bigg)\phi
\bigg(\frac{x-u}{1-\alpha}\bigg)\bigg]-\frac{1}{\lambda}\phi(x)\\
&=c+\bigg(\frac{1}{\lambda}-\frac{1}{\tl}\bigg)\inf_{u\in\RR^n}\bigg[\alpha\phi
\bigg(\frac{u}{\alpha}\bigg)+
(1-\alpha)\phi
\bigg(\frac{x-u}{1-\alpha}\bigg)\bigg]-\frac{1}{\lambda}\phi(x)\label{e:infc1}\\
&=c+\bigg(\frac{1}{\lambda}-\frac{1}{\tl}\bigg)\phi(x)-\frac{1}{\lambda}\phi(x)=c-\frac{1}{\tl}\phi(x),\label{e:infc2}
\end{align}
where from \eqref{e:infc1} to \eqref{e:infc2} we use the convexity of $\phi$.
Hence $\averagef+\frac{1}{\tl}\phi\geq c$ on $\dom\phi$, and so $\averagef+
\frac{1}{\tl}\phi\geq c$
on $\RR^n$.
Because $\tl\in ]0,\bl[$ was arbitrary, we conclude that $\lambda_{f}\geq \bl$
by Proposition~\ref{p:bound}.

\ref{i:env:sum}: Since $\averagef$ is proper lower semicontinuous by \ref{i:lower},
 it follows from Corollary~\ref{c:breg:convex} and
Proposition~\ref{p:prox:e} that
\begin{equation}\label{e:plus}
\lambda \averagef+\phi=(\phi^*-\lambda\lenv{\phi}{\averagef}{\lambda}\circ\nabla \phi^*)^*, \text{ and }
\end{equation}
$$\lprox{\phi}{\averagef}{\lambda} \text{ is convex-valued.}$$
Using Lemma~\ref{l:simple}, we obtain
\begin{equation}\label{e:conj:sum}
\lambda\left[\alpha\left(f_{1}+\frac{1}{\lambda}\phi\right)^*+(1-\alpha)\left(f_{2}+\frac{1}{\lambda}\phi
\right)^*\right]^*=\left[\alpha(\lambda f_{1}+\phi)^*+(1-\alpha)(\lambda f_{2}+\phi)^*\right]^*.
\end{equation}
Fact~\ref{f:env:conj} gives
\begin{equation}\label{e:f:env}
(\lambda f_{i}+\phi)^*=\phi^*-\lambda\lenv{\phi}{f_{i}}{\lambda}\circ\nabla\phi^*,
\end{equation}
which implies that $\phi^*-\lambda\lenv{\phi}{f_{i}}{\lambda}\circ\nabla\phi^*$ is convex.
Combining equations~\eqref{d:pave} and \eqref{e:plus}--\eqref{e:f:env} yields
\begin{align}\label{e:env:average}
(\phi^*-\lambda\lenv{\phi}{\averagef}{\lambda}\circ\nabla \phi^*)^*& =\left[\alpha(\phi^*-
\lambda\lenv{\phi}{f_{1}}{\lambda}\circ\nabla\phi^*)+(1-\alpha)(\phi^*-
\lambda\lenv{\phi}{f_{2}}{\lambda}\circ\nabla\phi^*)\right]^*\\
&=\left[-\alpha
\lambda\lenv{\phi}{f_{1}}{\lambda}\circ\nabla\phi^*-(1-\alpha)
\lambda\lenv{\phi}{f_{2}}{\lambda}\circ\nabla\phi^*+\phi^*\right]^*.
\end{align}
Because $\phi$ is coercive, $\phi^*$ is real-valued on $\RR^n$.
Taking conjugate both sides, followed by subtracting both sides by $\phi^*$, and using
the fact that $\nabla\phi^*$ is an isomorphism lead to
$$\lenv{\phi}{\averagef}{\lambda}=\alpha\lenv{\phi}{f_{1}}{\lambda}+(1-\alpha)
\lenv{\phi}{f_{2}}{\lambda} \text{ on $U$.}$$

\ref{i:prox:sum}: By \ref{i:env:sum},
the sum rule of Clarke subdifferential
\cite[Corollary 10.9]{Rock98} or \cite[Proposition 2.3.3, Corollary 3]{Clarke}
gives
$$\partial_{C}(-\lenv{\phi}{\averagef}{\lambda})=\alpha \partial_{C}(-\lenv{\phi}{f_{1}}{\lambda})+
(1-\alpha)\partial_{C} (-\lenv{\phi}{f_{2}}{\lambda}), $$
in which \enquote{=} holds because both $-\lenv{\phi}{f_{1}}{\lambda}$ and
$-\lenv{\phi}{f_{2}}{\lambda}$
are locally Lipschitz and Clarke regular.
Because of \ref{i:prox:b}, we can
apply Fact~\ref{f:derivative} to obtain
\begin{align}
& \frac{1}{\lambda}\nabla^{2}\phi(x)[\conv(\lprox{\phi}{\averagef}{\lambda}(x))-x]\\
&=\alpha \frac{1}{\lambda}\nabla^{2}\phi(x)[\conv(\lprox{\phi}{f_{1}}{\lambda}(x))-x]+
(1-\alpha)\frac{1}{\lambda}\nabla^{2}\phi(x)[\conv(\lprox{\phi}{f_{2}}{\lambda}(x))-x].
\end{align}
Multiplying both sides by $(\nabla^{2}\phi(x))^{-1}$ and simplifications give
$$\conv(\lprox{\phi}{\averagef}{\lambda}(x))=\alpha [\conv(\lprox{\phi}{f_{1}}{\lambda}(x))]
+(1-\alpha)[\conv(\lprox{\phi}{f_{2}}{\lambda}(x)].$$
Since $\lprox{\phi}{\averagef}{\lambda}(x)$ is convex by \ref{i:breg:conv} and
Fact~\ref{p:prox:e}\ref{i:convexf}, \ref{i:prox:sum} is proved.

\ref{i:prox:h}: Apply Proposition~\ref{p:phi:hull}\ref{i:h:c}.
\end{proof}

\begin{corollary} Suppose that
{\bfseries A1--A5} hold,
and that $f_{i}\in \GF$ with $\dom f_{i}\cap U\neq\varnothing$ for $i=1,2$. Then
for $\lambda\in ]0, +\infty[$,
\begin{equation}\label{e:prox:equiv}
\bigg(\partial\averagef+\frac{1}{\lambda}\nabla\phi\bigg)^{-1} =
\alpha\bigg(\partial f_{1}+\frac{1}{\lambda}\nabla\phi\bigg)^{-1}
+(1-\alpha)\bigg(\partial f_{2}+\frac{1}{\lambda}\nabla\phi\bigg)^{-1}.
\end{equation}
In particular,
$\forall x\in U$, $\partial\averagef(x) =\hat{\partial}\averagef(x)=$
\begin{align}\label{e:pointwise}
&
\left[\alpha\bigg(\partial f_{1}+\frac{1}{\lambda}\nabla\phi\bigg)^{-1}
+(1-\alpha)\bigg(\partial f_{2}+\frac{1}{\lambda}\nabla\phi\bigg)^{-1}\right]^{-1}(x)-\frac{1}{\lambda}\nabla\phi(x).
\end{align}
\end{corollary}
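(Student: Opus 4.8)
The plan is to obtain \eqref{e:prox:equiv} by conjugating the defining formula \eqref{d:pave}, passing to subdifferentials, and then rewriting the subdifferentials of the two sums $f_i+\tfrac1\lambda\phi$ and $\averagef+\tfrac1\lambda\phi$ via sum rules; \eqref{e:pointwise} is then a pointwise reformulation on $U$.

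First I would observe that since each $f_i\in\GF$, Corollary~\ref{c:proxbound} gives $\lambda_{f_i}=+\infty$, hence $\bl=+\infty$, so {\bfseries A3}--{\bfseries A4} hold for every $\lambda>0$ and Theorem~\ref{complete} applies. Put $g_i:=f_i+\tfrac1\lambda\phi$. Then $g_i\in\GF$, and by Corollary~\ref{c:proxbound} together with {\bfseries A2} it is $1$-coercive, so $\dom g_i^*=\RR^n$ and $g_i^*$ is a finite-valued (hence continuous) convex function; in particular $\alpha g_1^*+(1-\alpha)g_2^*\in\GF$. By \eqref{d:pave} and Theorem~\ref{complete}\ref{i:f:dom} (which gives $\dom[\alpha g_1^*+(1-\alpha)g_2^*]^*\subseteq\dom\phi$), we have $\averagef+\tfrac1\lambda\phi=[\alpha g_1^*+(1-\alpha)g_2^*]^*$ on all of $\RR^n$, so $\averagef+\tfrac1\lambda\phi\in\GF$ (this also follows from Theorem~\ref{complete}\ref{i:breg:conv}), and the Fenchel--Moreau theorem yields
\begin{equation}\label{e:cpa-conj}
\bigl(\averagef+\tfrac1\lambda\phi\bigr)^*=\alpha\bigl(f_1+\tfrac1\lambda\phi\bigr)^*+(1-\alpha)\bigl(f_2+\tfrac1\lambda\phi\bigr)^*.
\end{equation}

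Next I would differentiate \eqref{e:cpa-conj}. On the left, $\bigl(\partial(\averagef+\tfrac1\lambda\phi)\bigr)^{-1}=\partial(\averagef+\tfrac1\lambda\phi)^*$ by the conjugate--subdifferential inversion for functions in $\GF$ (see, e.g., \cite[Theorem~23.5]{Rock70}). On the right, since $g_1^*,g_2^*$ are finite-valued, the Moreau--Rockafellar sum rule gives $\partial[\alpha g_1^*+(1-\alpha)g_2^*]=\alpha(\partial g_1)^{-1}+(1-\alpha)(\partial g_2)^{-1}$. It then remains to identify $\partial g_i=\partial f_i+\tfrac1\lambda\nabla\phi$ and $\partial(\averagef+\tfrac1\lambda\phi)=\partial\averagef+\tfrac1\lambda\nabla\phi$. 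For $g_i$ this is the convex sum rule \cite[Corollary~16.48(ii)]{BC} under the qualification $\dom f_i\cap U\neq\varnothing$, exactly as in Proposition~\ref{p:prox:e}\ref{i:both:conv}, recalling that $\dom\partial\phi=U$ by essential smoothness. For $\averagef+\tfrac1\lambda\phi$, which is convex even though $\averagef$ in general is not, I would instead use the calculus rule for a proper lower semicontinuous function plus a smooth function \cite[Exercise~10.10]{Rock98}: on the open set $U$, where $\phi$ is $C^2$ by {\bfseries A5}, one has $\partial(\averagef+\tfrac1\lambda\phi)=\partial\averagef+\tfrac1\lambda\nabla\phi$ for the limiting (and likewise the Fr\'echet) subdifferential, and for the convex function $\averagef+\tfrac1\lambda\phi$ the limiting and Fr\'echet subdifferentials coincide with its convex subdifferential; off $U$ both sides are empty, for the right side because $\nabla\phi$ is only defined on $U$, and for the left because $\dom\partial(\averagef+\tfrac1\lambda\phi)=\ran\partial[\alpha g_1^*+(1-\alpha)g_2^*]\subseteq\alpha U+(1-\alpha)U=U$ by convexity of $U$. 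Assembling these identities turns the differentiated \eqref{e:cpa-conj} into \eqref{e:prox:equiv}. Finally, inverting the set-valued identity \eqref{e:prox:equiv}, evaluating at $x\in U$, and subtracting $\tfrac1\lambda\nabla\phi(x)$ gives the formula for $\partial\averagef(x)$ in \eqref{e:pointwise}; the equality $\partial\averagef(x)=\hat\partial\averagef(x)$ on $U$ follows from the same smooth-perturbation calculus applied to $\hat\partial$, since $\hat\partial$ and $\partial$ agree on the convex function $\averagef+\tfrac1\lambda\phi$.

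The main point requiring care is the sum-rule bookkeeping for $\averagef+\tfrac1\lambda\phi$: because $\averagef$ need not be convex one cannot apply the Moreau--Rockafellar rule to it directly, so the splitting $\partial(\averagef+\tfrac1\lambda\phi)=\partial\averagef+\tfrac1\lambda\nabla\phi$ must be obtained from the lower-semicontinuous-plus-smooth calculus on $U$ and then shown to persist on all of $\RR^n$ via the domain computation above (so that \eqref{e:prox:equiv} is a genuine equality of set-valued maps); the boundary cases $\alpha\in\{0,1\}$ are immediate, e.g.\ from Theorem~\ref{complete}\ref{i:prox:h} and Proposition~\ref{p:phi:hull}. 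Everything else is routine conjugate calculus.
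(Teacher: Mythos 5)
Your proof is correct, but it takes a genuinely different route from the paper's. The paper's argument is essentially one line: it invokes Theorem~\ref{complete}\ref{i:prox:sum} (the convex-combination formula $\lprox{\phi}{\averagef}{\lambda}=\alpha\,\conv\lprox{\phi}{f_1}{\lambda}+(1-\alpha)\conv\lprox{\phi}{f_2}{\lambda}$, itself obtained via the envelope identity and the Clarke-subdifferential machinery of Fact~\ref{f:derivative}) together with Proposition~\ref{p:prox:e}\ref{i:convexf}\&\ref{i:both:conv}, which rewrite each proximal map as $\big(\partial f_i+\tfrac1\lambda\nabla\phi\big)^{-1}\circ\big(\tfrac1\lambda\nabla\phi\big)$; cancelling the surjective $\nabla\phi$ then yields \eqref{e:prox:equiv}, and \eqref{e:pointwise} follows from the Clarke regularity of $\averagef=\big(\averagef+\tfrac1\lambda\phi\big)-\tfrac1\lambda\phi$ as a difference of a convex and a $C^1$ function. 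You instead bypass \ref{i:prox:sum} entirely: biconjugating the definition \eqref{d:pave} gives $\big(\averagef+\tfrac1\lambda\phi\big)^*=\alpha g_1^*+(1-\alpha)g_2^*$ directly (an identity the paper only records later, as \eqref{e:conj} in Section~\ref{s:duality}, and derives there from the envelope sum), after which everything is standard convex subdifferential inversion and sum rules. What your route buys is self-containedness for this corollary --- it needs only Theorem~\ref{complete}\ref{i:f:dom}\&\ref{i:breg:conv} and none of the envelope-differentiation apparatus --- at the cost of the off-$U$ domain bookkeeping, which you handle correctly via $\dom\partial\big(\averagef+\tfrac1\lambda\phi\big)=\ran\big[\alpha(\partial g_1)^{-1}+(1-\alpha)(\partial g_2)^{-1}\big]\subseteq U$. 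Your final step identifying $\hat{\partial}\averagef$ with $\partial\averagef$ on $U$ via the smooth-perturbation calculus applied to both subdifferentials of the convex function $\averagef+\tfrac1\lambda\phi$ is just a rephrasing of the paper's Clarke-regularity observation, so the two proofs coincide there.
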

\begin{proof} By Corollary~\ref{c:proxbound}, $\bl=+\infty$. To see \eqref{e:prox:equiv},
apply Theorem~\ref{complete}\ref{i:prox:sum} and Fact~\ref{p:prox:e}\ref{i:convexf}\&\ref{i:both:conv}.
Next,
\eqref{e:pointwise} follows from \eqref{e:prox:equiv} and that $\averagef=\left(\averagef+\frac{1}{\lambda}\phi
\right)-
\frac{1}{\lambda}\phi$ being a difference of a convex function and a $C^1$ function is Clarke regular.
\end{proof}

\begin{remark}
Note that while $\partial f_{i}$ is monotone, $\partial\averagef$ may be not monotone;
see, e.g., Example~\ref{e:simple}.
\end{remark}

Let us give a special case when both $f_{1}, f_{2}$ are indicator functions of closed subsets.
This highlights the connection to averaged Bregman projections,
which solve feasibility problems.
As in \cite{bwyy}, we define Bregman nearest distance function and nearest-point map.
\begin{definition}
The \emph{left Bregman nearest-distance function} to $C$ is defined
by
\begin{equation}
\bD{C}\:  \colon U\to \RPX \colon y\mapsto
\inf_{x\in C}D_{\phi}(x,y),
\end{equation}
and the \emph{left Bregman nearest-point map} (i.e., the classical
Bregman projector) onto $C$ is
$$\bproj{C}\colon \IDD \rightrightarrows \IDD \colon y\mapsto
\underset{x\in C}{\operatorname{argmin}}\;\: D_{\phi}(x,y) = \{x\in C\colon D_{\phi}(x,y)
= \bD{C}(y)\}.$$
\end{definition}
Using Lemma~\ref{l:simple} and Fact~\ref{f:env:conj}, we can write the proximal average:
$$\averagef=\frac{1}{\lambda}[\phi^*-\alpha\lambda  \lenv{\phi}{f_{1}}{\lambda}\circ\nabla\phi^*-
(1-\alpha)\lambda\lenv{\phi}{f_{2}}{\lambda}\circ\nabla\phi^*]^*-\frac{1}{\lambda}\phi.$$
In view of $\lenv{\phi}{\iota_{C}}{\lambda}=1/\lambda \bD{C}$,
$\lprox{\phi}{\iota_{C}}{\lambda}
=\bproj{C}$, we obtain the following result.

\begin{corollary}\label{c:sets}
Suppose that {\bfseries A1--A5} hold, and that
$f_{i} :=\iota_{C_{i}}$ with $C_{i}\subseteq\RR^n$ being nonempty and closed for $i=1, 2$.
Then for $\lambda\in ]0, +\infty[$ the following hold:
\begin{enumerate}
\item\label{i:set1}
 $\averagef=\frac{1}{\lambda}[\phi^*-\alpha\bD{C_{1}}\circ\nabla\phi^*-
(1-\alpha)\bD{C_{2}}\circ\nabla\phi^*]^*-\frac{1}{\lambda}\phi.$
\item\label{i:dom:ind} $\dom \averagef=\alpha\conv(C_{1}\cap\dom\phi)+(1-\alpha)\conv(C_{2}\cap\dom\phi)
\subseteq\dom\phi.$

\item$\lenv{\phi}{\averagef}{\lambda}=\alpha \bD{C_{1}}+(1-\alpha) \bD{C_{2}}$.
\item\label{i:proj:ind}
 $(\forall x\in U)\ \lprox{\phi}{\averagef}{\lambda}(x)=\alpha
 \conv\bproj{C_{1}}(x)+(1-\alpha)\conv\bproj{C_{2}}(x)$.
\end{enumerate}
If, in addition, $C_{1}, C_{2}$ are convex, then
\begin{enumerate}[(a)]
\item \label{i:set:exe}
$\averagef(x)=$
\begin{align}
&\frac{1}{\lambda}\inf\{\alpha D_{\phi}(y_{1},x)+(1-\alpha)D_{\phi}(y_{2},x):
\ y_{i}\in C_{i}\cap \dom\phi, i=1, 2,
\alpha y_{1}+(1-\alpha)y_{2}=x\}, \nonumber \text{ and }
\end{align}
\item the
\enquote{conv} operations in \ref{i:dom:ind}
and \ref{i:proj:ind} are superfluous.
\end{enumerate}
\end{corollary}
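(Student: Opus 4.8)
The plan is to read off (i)--(iv) from Theorem~\ref{complete} and then sharpen the formulas under convexity of the $C_{i}$. First observe that $f_{i}=\iota_{C_{i}}$ is bounded below by the zero linear functional, so Corollary~\ref{c:proxbound} gives $\lambda_{f_{i}}=+\infty$; hence $\bl=+\infty$, so {\bfseries A3}--{\bfseries A4} are in force for every $\lambda\in\,]0,+\infty[$, and $\dom f_{i}=C_{i}$ with $C_{i}\cap\dom\phi\neq\emp$ by {\bfseries A4}. Combining this with the elementary identities $\lenv{\phi}{\iota_{C_{i}}}{\lambda}=\tfrac{1}{\lambda}\bD{C_{i}}$ and $\lprox{\phi}{\iota_{C_{i}}}{\lambda}=\bproj{C_{i}}$ recorded just before the statement, items (ii), (iii), (iv) are obtained by substituting $\dom f_{i}=C_{i}$, $\lenv{\phi}{\iota_{C_{i}}}{\lambda}=\tfrac1\lambda\bD{C_{i}}$, $\lprox{\phi}{\iota_{C_{i}}}{\lambda}=\bproj{C_{i}}$ into Theorem~\ref{complete}\ref{i:f:dom}, \ref{i:env:sum}, \ref{i:prox:sum}. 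For (i) I would substitute $\lambda\lenv{\phi}{\iota_{C_{i}}}{\lambda}=\bD{C_{i}}$ into the displayed expression for $\averagef$ just above the corollary; equivalently, since $\lambda\averagef+\phi\in\GF$ by Theorem~\ref{complete}\ref{i:breg:conv}, apply Corollary~\ref{c:breg:convex} to $\averagef$ and insert (iii).

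Now suppose $C_{1},C_{2}$ are convex. Then $f_{i}+\tfrac{1}{\lambda}\phi=\iota_{C_{i}}+\tfrac{1}{\lambda}\phi$ is already proper, lower semicontinuous and convex, hence $\conv\big(f_{i}+\tfrac{1}{\lambda}\phi\big)=f_{i}+\tfrac{1}{\lambda}\phi$, and Theorem~\ref{complete}\ref{i:p:exp} reduces to
$$\averagef=\Big[\alpha\timess\big(\iota_{C_{1}}+\tfrac{1}{\lambda}\phi\big)\Big]\Box\Big[(1-\alpha)\timess\big(\iota_{C_{2}}+\tfrac{1}{\lambda}\phi\big)\Big]-\tfrac{1}{\lambda}\phi,$$
with the infimal convolution exact. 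Claim (b) is then immediate: $\conv(C_{i}\cap\dom\phi)=C_{i}\cap\dom\phi$ because $C_{i}$ and $\dom\phi$ are convex, and $\bproj{C_{i}}(x)=\argmin_{y\in C_{i}}D_{\phi}(y,x)$ is the set of minimizers over the convex set $C_{i}$ of the convex function $y\mapsto D_{\phi}(y,x)$, hence itself convex, so $\conv\bproj{C_{i}}(x)=\bproj{C_{i}}(x)$; this deletes ``$\conv$'' from (ii) and (iv).

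For (a) with $\alpha\in\,]0,1[$, I would expand $\alpha\timess\big(\iota_{C_{1}}+\tfrac1\lambda\phi\big)(u)=\iota_{\alpha C_{1}}(u)+\tfrac{\alpha}{\lambda}\phi(u/\alpha)$, and similarly for the second factor, and use exactness of $\Box$ together with the reparametrisation $u=\alpha y_{1}$: for each $x$ there exist $y_{1}\in C_{1}$, $y_{2}\in C_{2}$ with $\alpha y_{1}+(1-\alpha)y_{2}=x$ realizing
$$\averagef(x)=\tfrac1\lambda\inf\big\{\alpha\phi(y_{1})+(1-\alpha)\phi(y_{2})-\phi(x)\ :\ y_{i}\in C_{i},\ \alpha y_{1}+(1-\alpha)y_{2}=x\big\},$$
finiteness of the objective forcing $y_{i}\in C_{i}\cap\dom\phi$. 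For $x\in\IDD$, the relation $\alpha(y_{1}-x)+(1-\alpha)(y_{2}-x)=0$ converts $\alpha\phi(y_{1})+(1-\alpha)\phi(y_{2})-\phi(x)$ into $\alpha D_{\phi}(y_{1},x)+(1-\alpha)D_{\phi}(y_{2},x)$, which is exactly (a). The degenerate cases $\alpha\in\{0,1\}$ I would handle via Theorem~\ref{complete}\ref{i:prox:h}: it gives $\averagef=\lhul{\phi}{\iota_{C_{2}}}{\lambda}$ (resp.\ $\lhul{\phi}{\iota_{C_{1}}}{\lambda}$) when $\alpha=0$ (resp.\ $\alpha=1$), and since $\iota_{C_{i}}+\tfrac1\lambda\phi\in\GF$, Proposition~\ref{p:phi:hull}\ref{i:h:c} evaluates these to $\iota_{C_{i}}+\iota_{\dom\phi}$, while the right-hand side of (a) collapses to $\tfrac1\lambda D_{\phi}(x,x)=0$ on $C_{i}\cap\dom\phi$ under the convention $0\cdot(+\infty)=0$.

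The one point needing care is the last identification: rewriting $\alpha\phi(y_{1})+(1-\alpha)\phi(y_{2})-\phi(x)$ as a convex combination of Bregman distances $D_{\phi}(\cdot,x)$ uses $\nabla\phi(x)$ and hence requires $x\in\IDD$; on $\bd\dom\phi$ the formula in (a) is to be read on $\IDD$, the two sides still agreeing there after invoking the relative continuity of $\phi$ from {\bfseries A5} and lower semicontinuity. Everything else is routine bookkeeping: carrying the epi-multiplication $\timess$ through the exact infimal convolution and handling the $+\infty$ conventions in the cases $\alpha\in\{0,1\}$.
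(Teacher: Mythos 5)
Your proposal is correct and follows essentially the same route as the paper: items (i)--(iv) are read off from Theorem~\ref{complete} together with the identities $\lenv{\phi}{\iota_{C_i}}{\lambda}=\tfrac{1}{\lambda}\bD{C_i}$ and $\lprox{\phi}{\iota_{C_i}}{\lambda}=\bproj{C_i}$, and (a) is obtained by expanding the exact infimal convolution from Theorem~\ref{complete}\ref{i:p:exp} and using the constraint $\alpha y_1+(1-\alpha)y_2=x$ to rewrite $\alpha\phi(y_1)+(1-\alpha)\phi(y_2)-\phi(x)$ as $\alpha D_{\phi}(y_1,x)+(1-\alpha)D_{\phi}(y_2,x)$. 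Your additional care with the degenerate cases $\alpha\in\{0,1\}$, with boundary points of $\dom\phi$ where $\nabla\phi(x)$ is unavailable, and with the observation that $\bl=+\infty$ (which is what justifies the range $\lambda\in\,]0,+\infty[$) goes slightly beyond what the paper records but does not change the argument.
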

\begin{proof} \ref{i:set1}-\ref{i:proj:ind} follow from Theorem~\ref{complete}.
To see \ref{i:set:exe},
we consider
\begin{align}
&\averagef(x)=
\bigg[\alpha\timess\bigg(\iota_{C_{1}}+\frac{1}{\lambda}\phi\bigg)\bigg]
\square \bigg[(1-\alpha)\timess \bigg(\iota_{C_{2}}+\frac{1}{\lambda}\phi\bigg)\bigg](x)
-\frac{1}{\lambda}\phi(x)\\
&=\inf_{x_{1}+x_{2}=x}\bigg(\iota_{C_{1}}(x_{1}/\alpha)
+\alpha\frac{1}{\lambda}\phi(x_{1}/\alpha)+\iota_{C_{2}}(x_{2}/(1-\alpha))
+(1-\alpha)\frac{1}{\lambda}\phi(x_{2}/(1-\alpha))\bigg)-\frac{1}{\lambda}\phi(x)\\
&=\frac{1}{\lambda}\inf\{\alpha\phi(y_{1})+(1-\alpha)\phi(y_{2})-\phi(x):\
y_{i}\in C_{i}\cap\dom\phi, i=1, 2, \alpha y_{1}+(1-\alpha)y_{2}=x\}.
\end{align}
The proof is complete by using that
when $\alpha y_{1}+(1-\alpha)y_{2}=x$, one has
\begin{align}
& \alpha\phi(y_{1})+(1-\alpha)\phi(y_{2})-\phi(x)\\
&=\alpha (\phi(y_{1})-\phi(x)-\scal{\nabla\phi(x)}{y_{1}-x}) +
(1-\alpha)(\phi(y_{2})-\phi(x)-\scal{\nabla\phi(x)}{y_{2}-x})\\
&=\alpha D_{\phi}(y_{1},x)+(1-\alpha)D_{\phi}(y_{2},x).
\end{align}
\end{proof}

\section{When is the Bregman proximal average convex?}\label{s:whenconvex}

We shall need a Bregman version of the Baillon-Haddad theorem, see, e.g., \cite{haddad, BC}. To this end, we introduce $\nphi$-firmly nonexpansive mappings.
Define the symmetrized Bregman distance $\sphi: U\times U\rightarrow\RR$ by
$\sphi(x,y)=D_{\phi}(x,y)+D_{\phi}(y,x)=\scal{\nabla \phi(x)-\nabla\phi(y)}{x-y}.$

\begin{definition}
Let $T:U\subseteq\RR^n\rightarrow U$. We say that $T$ is $\nabla\phi$-firmly nonexpanive on $U$
if
$$(\forall u\in U)(\forall v\in U)\ \scal{u-v}{Tu-Tv}\geq\scal{\nabla\phi(Tu)-\nabla\phi(Tv)}{Tu-Tv}
=\sphi(Tu, Tv).$$
\end{definition}
When $\phi(x)=1/2\|x\|^2$, a $\nabla\phi$-firmly nonexpansive mapping is the usual firmly nonexpansive mapping;
see, e.g., \cite[Proposition 4.4]{BC}.

\begin{lemma}\label{l:strong}
Suppose that $g\in\GF$, $\dom g\subseteq\dom\phi$,
 and $(\reli\dom g)\cap U \neq\varnothing$.
Then the following are equivalent:
\begin{enumerate}
\item\label{i:g} $g:\RR^{n}\rightarrow\RX$ is $\phi$-strongly convex, i.e., $g=f+\phi$
for a convex function $f\in\GF$.
\item\label{i:gdual} $g^*$ is a $\phi^*$-anisotropic envelope of $f^*$ with
$f\in\GF$, i.e., $g^*=f^*\square \phi^*$.
\item\label{i:gs} $g^*$ is differentiable with $\nabla g^*$ being
$\nphi$-firmly nonexpansive on $\RR^n$.
\item\label{i:diff} $(\phi^*-g^*)\circ\nabla\phi=\lambda \lenv{\phi}{f}{\lambda}$ for a convex function
$f\in \GF$ and $\lambda>0$.
\item\label{i:gs:prox}
 $g^*$ is differentiable on $\RR^n$ with $\nabla g^*\circ \nabla\phi=\lprox{\phi}{f}{1}$ for
 some $f\in \GF$.
\end{enumerate}
\end{lemma}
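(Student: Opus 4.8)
The plan is to prove that each of \ref{i:gdual}, \ref{i:gs}, \ref{i:diff}, \ref{i:gs:prox} is equivalent to \ref{i:g}, the cheapest formulation. Throughout one uses that, since $\phi$ is Legendre with $\dom\phi^*=\RR^n$, the map $\nabla\phi\colon U\to\RR^n$ is a bijection with inverse $\nabla\phi^*$, and $U=\inte\dom\phi=\reli\dom\phi$. For \ref{i:g}$\Leftrightarrow$\ref{i:gdual}: if $g^*=f^*\square\phi^*$ with $f\in\GF$, then conjugating and using $(f^*\square\phi^*)^*=f^{**}+\phi^{**}=f+\phi$ gives $g=f+\phi$; conversely, given $g=f+\phi$, first replace $f$ by $f+\iota_{\dom\phi}$, which by closedness of $\dom\phi$ ({\bfseries A5}) leaves $g$ unchanged and forces $\dom f=\dom g$, so that $(\reli\dom f)\cap U=(\reli\dom g)\cap U\neq\varnothing$; then $\reli\dom f\cap\reli\dom\phi\neq\varnothing$ and \cite[Theorem~16.4]{Rock70} yields $g^*=(f+\phi)^*=f^*\square\phi^*$.

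For \ref{i:g}$\Leftrightarrow$\ref{i:diff}: since $f\in\GF$ has threshold $\lambda_f=+\infty$ (Corollary~\ref{c:proxbound}), Fact~\ref{f:env:conj}\ref{i:env:def} gives $\lambda\lenv{\phi}{f}{\lambda}=\big(\phi^*-(\lambda f+\phi)^*\big)\circ\nabla\phi$; substituting this in \ref{i:diff} and cancelling the bijection $\nabla\phi$ shows \ref{i:diff} amounts to $g^*=(\lambda f+\phi)^*$, i.e.\ (Fenchel--Moreau) to $g=\lambda f+\phi$, which is \ref{i:g} after absorbing $\lambda$ into $f$. For \ref{i:g}$\Leftrightarrow$\ref{i:gs:prox}: if $g=f+\phi$, then $g$ is $1$-coercive (as $\phi$ is and $f$ is minorized by an affine function) and essentially strictly convex (as $\phi$ is), so $g^*$ is differentiable on $\RR^n$; Proposition~\ref{t:grad} with $\lambda=1$ (using {\bfseries A5} for invertibility of $\Hess\phi$, and $f+\phi\in\GF$) gives $\partial g^*=\lprox{\phi}{f}{1}\circ\nabla\phi^*$, and differentiability of $g^*$ turns this into $\nabla g^*\circ\nabla\phi=\lprox{\phi}{f}{1}$. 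Conversely, applying this to $\widetilde g:=f+\phi$ gives $\nabla\widetilde g^*\circ\nabla\phi=\lprox{\phi}{f}{1}=\nabla g^*\circ\nabla\phi$, hence $\nabla g^*=\nabla\widetilde g^*$ on $\RR^n$ by surjectivity of $\nabla\phi$; two differentiable convex functions with equal gradient differ by a constant, so $g=\widetilde g+c=(f+c)+\phi$, which is \ref{i:g}.

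The direction \ref{i:g}$\Rightarrow$\ref{i:gs} is short: with $g=f+\phi$, $g^*$ is differentiable (as above) and $\nabla g^*=(\partial g)^{-1}$; by the subdifferential sum rule $\partial g=\partial f+\nabla\phi$ on $U$ (valid by the constraint qualification $\reli\dom f\cap U\neq\varnothing$), so for $u,v\in\RR^n$, writing $x=\nabla g^*(u)$ and $x'=\nabla g^*(v)$ one has $u-\nabla\phi(x)\in\partial f(x)$ and $v-\nabla\phi(x')\in\partial f(x')$, and monotonicity of $\partial f$ gives $\scal{u-v}{x-x'}\geq\scal{\nabla\phi(x)-\nabla\phi(x')}{x-x'}=\sphi(x,x')$, i.e.\ $\nabla g^*$ is $\nphi$-firmly nonexpansive.

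The remaining implication \ref{i:gs}$\Rightarrow$\ref{i:g} is the crux. Rewriting the $\nphi$-firm nonexpansiveness of $\nabla g^*=(\partial g)^{-1}$ as $(x,u)$ ranges over $\gr\partial g$, it says exactly that $A:=\partial g-\nabla\phi$, with $\dom A=\dom\partial g\subseteq U$, is a monotone operator. Set $h:=g-\phi$, which is well defined since $\dom g\subseteq\dom\phi$, with $\dom h=\dom g$; using {\bfseries A5} (closedness of $\dom\phi$, relative continuity of $\phi$) together with lower semicontinuity of $g$, one checks $h$ is proper and lower semicontinuous on $\RR^n$. Since $\phi$ is $C^1$ on $U$, the Fr\'echet sum rule gives $\hat\partial h=\partial g-\nabla\phi=A$ on the open convex set $U$, so by the characterization of convexity via monotonicity of the subdifferential (see, e.g., \cite{Rock98,Clarke}), $h$ is convex on $U$. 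Convexity then extends to $\RR^n$: fixing $\bar x\in(\reli\dom g)\cap U$, for any $x_1,x_2\in\dom g$ the points $(1-s)x_i+s\bar x$ lie in $U\cap\dom g$ for $s\in\left]0,1\right]$, $h$ is continuous along these segments (continuity of $g$ into $\reli\dom g$ and of $\phi$ into $U$), and letting $s\downarrow0$ in the convexity inequality on $U$ (using lower semicontinuity of $h$) yields convexity of $h$ on $\RR^n$. Hence $h\in\GF$ and $g=h+\phi$, which is \ref{i:g}. The main obstacle is precisely this last step: one must invoke the nontrivial fact that a lower semicontinuous function with monotone subdifferential is convex, and control the behaviour at $\bd\dom\phi$ — which is exactly where assumption {\bfseries A5} is needed.
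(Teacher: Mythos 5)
Your proof is correct, and for the equivalences (i)$\Leftrightarrow$(ii), (i)$\Rightarrow$(iii) and (i)$\Leftrightarrow$(iv) it essentially coincides with the paper's argument (your normalization $f\mapsto f+\iota_{\dom\phi}$ to secure the constraint qualification for the Fenchel duality step is a clean variant of the paper's relative-interior computation). You genuinely diverge in two places. First, you tie (v) directly to (i) via Proposition~\ref{t:grad}, i.e.\ $\partial(f+\phi)^*=\lprox{\phi}{f}{1}\circ\nabla\phi^*$, whereas the paper ties (v) to (ii) via the anisotropic decomposition of Proposition~\ref{t:prox:aniso}; both routes finish with the same \enquote{equal gradients, hence equal up to an additive constant} step, and yours buys a slightly lighter hypothesis on the $f$ appearing in (v) at the cost of using the Hessian-invertibility part of {\bfseries A5}. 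Second, and more substantially, for (iii)$\Rightarrow$(i) you set $h=g-\phi$, compute $\hat{\partial}h=\partial g-\nabla\phi$ on $U$, and invoke the theorem that a lower semicontinuous function with monotone subdifferential is convex, then propagate convexity from $U$ to $\dom g$ by the segment/lower-semicontinuity argument; the paper instead exploits that $g$ is already convex to get convexity of $g-\phi$ on $\reli\dom g$ elementarily, and then manufactures the convex summand as $\closu\big((g-\phi)+\iota_{\reli\dom g}\big)$, finishing with \cite[Theorem~9.3]{Rock70}. Your route is shorter but leans on a nontrivial external result, and you should cite it precisely: \cite[Theorem~12.17]{Rock98} is a global statement on $\RR^n$, while you only have monotonicity of $\hat{\partial}h$ on the open set $U$, so what you actually need is the local version on open convex sets (Correa--Jofr\'e--Thibault), not the vague pointer to \cite{Rock98,Clarke}. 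With that reference supplied, and noting that your use of {\bfseries A5} to obtain lower semicontinuity of $g-\phi$ legitimately replaces the paper's closure construction, the argument is complete.
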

\begin{proof}
\ref{i:g}$\Rightarrow$\ref{i:gdual}: Since $\varnothing\neq \reli\dom g=
\reli[(\dom f)\cap (\dom \phi)]=(\reli\dom f) \cap (\reli\dom\phi)
\subseteq (\dom f) \cap U$, we have $\dom f\cap \inte\dom\phi\neq \varnothing$.
Apply the Attouch-Brezis theorem \cite[Theorem 15.3]{BC}.
\ref{i:gdual}$\Rightarrow$\ref{i:g}: Take the conjugation both sides to obtain
$g=g^{**}=f^{**}+\phi^{**}=f+\phi$; see, e.g., \cite[Theorem 13.37]{BC}.

\ref{i:g}$\Rightarrow$\ref{i:gs}: Since $\phi$ is $1$-coercive, so is $g$ and hence
 $\ran \partial g=\RR^n$.
Because
$\dom g\cap\inte\dom\phi\neq\varnothing$ implies $\dom f\cap\inte\dom\phi\neq\varnothing$,
we have $\partial g=\partial f+\partial\phi$, so $\dom\partial g\subseteq\dom\partial\phi$.
As $f$ is convex, $\phi$ is essentially strictly
convex, we see that $g$ is essentially strictly convex, so
$g^*$ is essentially smooth.
Using $u\in\partial g(x), v\in\partial g(y)$ if and only if
$x=\nabla g^*(u), y=\nabla g^*(v)$, we obtain
\begin{align}
&\scal{\partial g(x)-\partial g(y)}{x-y}\geq \scal{\nphi(x)-\nphi(y)}{x-y}\\
&\Leftrightarrow \scal{u-v}{\nabla g^*(u)-\nabla g^*(v)}
\geq \scal{\nphi(\nabla g^*(u))-\nphi(\nabla g^*(v))}{\nabla g^*(u)-\nabla g^*(v)}
\end{align}
for all $u,v\in\RR^n$.

\ref{i:gs}$\Rightarrow$\ref{i:g}: Since
\begin{align}
&(\forall u, v\in\RR^n)\ \scal{u-v}{\nabla g^*(u)-\nabla g^*(v)}
\geq \scal{\nphi(\nabla g^*(u))-\nphi(\nabla g^*(v))}{\nabla g^*(u)-\nabla g^*(v)}\\
& \Leftrightarrow (\forall x,y\in\dom\partial g \cap U)\
\scal{\partial g(x)-\partial g(y)}{x-y}\geq \scal{\nphi(x)-\nphi(y)}{x-y},
\end{align}
the function $g-\phi$ is convex on convex subsets of
$(\dom\partial g)\cap U\supseteq (\reli\dom g)\cap U=\reli(\dom g\cap \dom\phi)=\reli \dom g$.
Define $\tilde{f}(x)=g(x)-\phi(x)$ if $x\in\reli \dom g$,
and $+\infty$ otherwise. Since $\tilde{f}$ is proper and convex, by \cite[Theorem 2.35]{Rock98},
the lower semicontinuous hull
$f=\closu \tilde{f}$ is proper, so $f\in\GF$. We claim that
$g=f+\phi$ on $\dom g$. Indeed, as $g-\phi$ is relatively continuous on $\reli\dom g$,
$f=\closu(g-\phi)=g-\phi$, which gives $g=f+\phi$ on $\reli\dom g$.
Take $x_{0}\in \reli \dom g\cap U$, which is possible by the assumption, and let
$x\in \dom g$. Then, by \cite[Theorem 2.36]{Rock98},
$$f(x)=\lim_{\tau\uparrow 1}f((1-\tau)x_{0}+\tau x)
=\lim_{\tau\uparrow 1}(g((1-\tau)x_{0}+\tau x)
-\phi((1-\tau)x_{0}+\tau x))
=g(x)-\phi(x)$$
because both $g, \phi\in\GF$. Therefore, $f=g-\phi$ on $\dom g$. As $\dom g\subset\dom\phi$,
we get $g=f+\phi$ on $\dom g$ and $f\in\GF$. However, at this stage, we do not know whether $
g=f+\phi$ on $\RR^n\setminus \dom g$. Now write
$g=(f+\iota_{\dom g})+\phi$. Becuase $\dom(f+\iota_{\dom g})=\dom g$, $\reli\dom g\cap\ U\neq\varnothing$
and both $(f+\iota_{\dom g})$ and $\phi$ are proper convex,
\cite[Theorem 9.3]{Rock70} gives
$$g=\closu g=\closu(f+\iota_{\dom g})+\closu\phi=\closu(f+\iota_{\dom g})+\phi$$
and $\closu(f+\iota_{\dom g})\in\GF$. This proves \ref{i:g}.

\ref{i:diff}$\Leftrightarrow$\ref{i:g}: We have
\begin{align}
 {\rm \ref{i:diff}} & \Leftrightarrow (\phi^*-g^*)\circ\nabla\phi
=\lambda \lenv{\phi}{f}{\lambda}
\Leftrightarrow \phi^*-g^*=\lambda\lenv{\phi}{f}{\lambda}\circ\nabla\phi^*\\
& \Leftrightarrow \phi^*-\lambda\lenv{\phi}{f}{\lambda}\circ\nabla\phi^*=g^*
 \Leftrightarrow (\lambda f+\phi)^*=g^* (\mbox{Fact~\ref{f:env:conj}})
\Leftrightarrow  g=\lambda f+\phi,
\end{align}
and $\lambda f\in\GF$.

\ref{i:gdual}$\Rightarrow$\ref{i:gs:prox}: \ref{i:gdual} gives $\dom g^*=\RR^n$ and
$(\forall x^*\in\RR^n)\ \nabla g^*(x^*)=\nabla\phi^*(x^*-\aprox{\phi^*}{f^*}(x^*))$. Put $x^*=\nabla
\phi(x)$ for $x\in U$ to obtain
$$\nabla g^*(\nabla\phi(x))=\nabla\phi^*(\nabla\phi(x)-\aprox{\phi^*}{f^*}(\nabla\phi(x)))=
\lprox{\phi}{f}{1}(x)$$
by Proposition~\ref{t:prox:aniso}.

\ref{i:gs:prox}$\Rightarrow$\ref{i:gdual}: \ref{i:gs:prox} gives  $(\forall x\in U)\ \nabla g^*(\nabla\phi(x))=
\lprox{\phi}{f}{1}(x)=\nabla\phi^*(\nabla\phi(x)-\aprox{\phi^*}{f^*}(\nabla\phi(x))).$
In view of $\ran\nabla\phi=\RR^n$, replacing $\nabla \phi(x)$ by $x^*$ gives
$$(\forall x^*\in\RR^n)\ \nabla g^*(x^*)=\nabla\phi^*(x^*-\aprox{\phi^*}{f^*}(x^*)=\nabla (f^*\square\phi^*)(x^*),$$
which implies $g^*=(f^*\square\phi^*)+c=(f^*+c)\square\phi^*$ for a constant $c\in\RR$.
Hence \ref{i:gdual} holds.
\end{proof}
\begin{remark} The above is an extended version of
Baillon-Haddad Theorem; see \cite[Theorem 18.15, Corollary 18.17]{BC}, \cite{haddad}.
$\phi$-strongly convex functions have been used in \cite{bbctw} for studying Bregman gradient algorithms.
\end{remark}

\begin{lemma}\label{l:convex:c} Let $\sphi$ be convex. Suppose that
$T_{1}, T_{2}$ are $\nphi$-firmly nonexpansive on $U$. Then
$\alpha T_{1}+(1-\alpha)T_{2}$ is $\nphi$-firmly nonexpansive on $U$.
\end{lemma}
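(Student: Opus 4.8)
The plan is to reduce the statement to the (joint) convexity of $\sphi$ on $U\times U$. First I would set $T:=\alpha T_{1}+(1-\alpha)T_{2}$ and record that $T$ does map $U$ into $U$: since $U=\intdom\phi$ is convex and $T_{1}u,T_{2}u\in U$ for every $u\in U$, the convex combination $Tu=\alpha T_{1}u+(1-\alpha)T_{2}u$ lies in $U$. Hence for $u,v\in U$ the pair $(Tu,Tv)$ belongs to $U\times U$, so the defining inequality of $\nabla\phi$-firm nonexpansiveness makes sense for $T$.

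Next, fix $u,v\in U$ and expand the left-hand side by bilinearity of the inner product,
\[
\scal{u-v}{Tu-Tv}=\alpha\,\scal{u-v}{T_{1}u-T_{1}v}+(1-\alpha)\,\scal{u-v}{T_{2}u-T_{2}v}.
\]
Applying the $\nabla\phi$-firm nonexpansiveness of $T_{1}$ and of $T_{2}$ term by term bounds this below by $\alpha\,\sphi(T_{1}u,T_{1}v)+(1-\alpha)\,\sphi(T_{2}u,T_{2}v)$. It therefore remains to establish
\[
\alpha\,\sphi(T_{1}u,T_{1}v)+(1-\alpha)\,\sphi(T_{2}u,T_{2}v)\ \geq\ \sphi(Tu,Tv).
\]
But this is exactly the convexity inequality for $\sphi$ evaluated at the two points $(T_{1}u,T_{1}v)$ and $(T_{2}u,T_{2}v)$ of $U\times U$ with weights $\alpha$ and $1-\alpha$, since $\alpha(T_{1}u,T_{1}v)+(1-\alpha)(T_{2}u,T_{2}v)=(Tu,Tv)$; it thus follows at once from the hypothesis that $\sphi$ is convex. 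Chaining the three inequalities gives $\scal{u-v}{Tu-Tv}\geq\sphi(Tu,Tv)$ for all $u,v\in U$, i.e. $T$ is $\nabla\phi$-firmly nonexpansive.

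The whole argument is a two-step convexity computation, so I do not anticipate a genuine obstacle. The only points requiring a little care are bookkeeping ones: identifying the pair $(Tu,Tv)$ with the convex combination of $(T_{1}u,T_{1}v)$ and $(T_{2}u,T_{2}v)$ in the product space $U\times U$, and being clear that the hypothesis is convexity of $\sphi$ as a function of the joint variable on $U\times U$ (which is what allows the above inequality), not a convexity statement about either Bregman distance $D_{\phi}(\cdot,\cdot)$ separately. One should also note in passing that $\alpha\in[0,1]$ by the standing assumption \textbf{A4}, which is all that is needed for the convex-combination steps.
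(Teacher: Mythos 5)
Your argument is correct and is essentially the paper's proof read in the opposite direction: the same three-step chain (bilinearity of the inner product, $\nabla\phi$-firm nonexpansiveness of each $T_i$, and joint convexity of $\sphi$ applied to the convex combination $\alpha(T_1u,T_1v)+(1-\alpha)(T_2u,T_2v)=(Tu,Tv)$). The added observation that $T$ maps $U$ into $U$ is a harmless and correct piece of bookkeeping that the paper leaves implicit.
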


\begin{proof}
This follows from the following calculations: $\forall u, v\in U$,
\begin{align*}
& \scal{\nphi(\alpha T_{1}u+(1-\alpha)T_{2}u)-\nphi(\alpha T_{1}v+(1-\alpha)T_{2}v)}{
(\alpha T_{1}u+(1-\alpha)T_{2}u)-(\alpha T_{1}v+(1-\alpha)T_{2}v)}\\
&=\sphi(\alpha T_{1}u+(1-\alpha)T_{2}u,\alpha T_{1}v+(1-\alpha)T_{2}v)
=\sphi(\alpha(T_{1}u,T_{1}v)+(1-\alpha)(T_{2}u,T_{2}v))\\
&\leq \alpha \sphi(T_{1}u, T_{1}v)+(1-\alpha)\sphi(T_{2}u, T_{2}v)\ \ (\text{$\sphi$ being convex}) \\
&\leq \alpha\scal{u-v}{T_{1}u-T_{1}v}+(1-\alpha)\scal{u-v}{T_{2}u-T_{2}v}\ \ (\text{$T_{i}$ being $\nphi$-firmly
nonexpansive})\\
&=\scal{u-v}{\alpha T_{1}u+(1-\alpha)T_{2}u-(\alpha T_{1}v+(1-\alpha)T_{2}v)}.
\end{align*}
\end{proof}

Here is the main result of this section.
\begin{theorem}[convexity of Bregman proximal average]\label{t:convex}
Let {\bfseries A1--A5} hold, and let $\sphi$ be convex.
Suppose that $f_{i}\in \GF$ and $(\reli\dom f_{i})\cap U\neq\varnothing$
for $i=1, 2$.
Then $\averagef$ is convex.
\end{theorem}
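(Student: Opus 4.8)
The plan is to deduce convexity of $\averagef$ from the Baillon--Haddad--type characterization in Lemma~\ref{l:strong} together with the convexity-preservation in Lemma~\ref{l:convex:c}. By Theorem~\ref{complete}\ref{i:breg:conv} we always have $\lambda\averagef+\phi\in\GF$, so $\averagef$ is convex precisely when $\lambda\averagef+\phi$ is $\phi$-strongly convex in the sense of Lemma~\ref{l:strong}\ref{i:g}; equivalently, setting $g:=\lambda\averagef+\phi$, it suffices to show $g^*$ is differentiable with $\nabla g^*$ being $\nphi$-firmly nonexpansive on $\RR^n$ (Lemma~\ref{l:strong}\ref{i:gs}), or equivalently $\nabla g^*\circ\nabla\phi=\lprox{\phi}{h}{1}$ for some $h\in\GF$ (Lemma~\ref{l:strong}\ref{i:gs:prox}). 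The key point is that Theorem~\ref{complete}\ref{i:prox:sum} identifies $\lprox{\phi}{\averagef}{\lambda}$ as a convex combination of the $\lprox{\phi}{f_i}{\lambda}$, which via Lemma~\ref{l:strong} are the building blocks of $\nphi$-firmly nonexpansive maps, and Lemma~\ref{l:convex:c} (using that $\sphi$ is convex) says such convex combinations stay $\nphi$-firmly nonexpansive.

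First I would record that since each $f_i\in\GF$ with $(\reli\dom f_i)\cap U\neq\varnothing$, Corollary~\ref{c:proxbound} gives $\lambda_{f_i}=+\infty$, so $\bl=+\infty$ and the hypothesis $\lambda\in{]0,\bl[}$ is automatic for every $\lambda>0$; moreover each $\lambda f_i+\phi\in\GF$ is $\phi$-strongly convex, so by Lemma~\ref{l:strong}\ref{i:g}$\Leftrightarrow$\ref{i:gs} the map $\nabla(\lambda f_i+\phi)^*$ is $\nphi$-firmly nonexpansive on $\RR^n$, and by Proposition~\ref{p:prox:e}\ref{i:both:conv} (or \eqref{e:subconj}) we have $\nabla(\lambda f_i+\phi)^*\circ\nabla\phi=\lprox{\phi}{f_i}{\lambda}$ on $U$. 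Next, applying Lemma~\ref{l:convex:c} with $T_i:=\nabla(\lambda f_i+\phi)^*$ (which map $\RR^n$ into $U\subseteq\RR^n$, using essential smoothness of $(\lambda f_i+\phi)^*$) and convexity of $\sphi$, the map $T:=\alpha T_1+(1-\alpha)T_2$ is $\nphi$-firmly nonexpansive on $\RR^n$. Then Theorem~\ref{complete}\ref{i:prox:sum}, together with the fact that the $\lprox{\phi}{f_i}{\lambda}(x)$ are singletons (Proposition~\ref{p:prox:e}\ref{i:both:conv}), gives $\lprox{\phi}{\averagef}{\lambda}=\alpha T_1\circ\nabla\phi+(1-\alpha)T_2\circ\nabla\phi=T\circ\nabla\phi$ on $U$; since $\averagef$ is prox-bounded with threshold $\geq\bl=+\infty$ (Theorem~\ref{complete}\ref{i:prox:b}) and $\lambda\averagef+\phi\in\GF$, Proposition~\ref{t:grad} yields $\partial(\lambda\averagef+\phi)^*=\lprox{\phi}{\averagef}{\lambda}\circ\nabla\phi^*=T$ on $\RR^n$, so $g^*=(\lambda\averagef+\phi)^*$ has $\partial g^*=T$ single-valued, hence $g^*$ is differentiable with $\nabla g^*=T$ being $\nphi$-firmly nonexpansive on $\RR^n$.

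Finally, by Lemma~\ref{l:strong}\ref{i:gs}$\Rightarrow$\ref{i:g} applied to $g=\lambda\averagef+\phi$ (which lies in $\GF$ with $\dom g=\dom\averagef\subseteq\dom\phi$ by Theorem~\ref{complete}\ref{i:f:dom}\ref{i:breg:conv}, and satisfies $(\reli\dom g)\cap U\neq\varnothing$ since $\reli\dom\averagef\supseteq\alpha(\reli\dom f_1\cap U)+(1-\alpha)(\reli\dom f_2\cap U)$ meets $U$), we conclude $g=\tilde f+\phi$ for some convex $\tilde f\in\GF$; therefore $\averagef=(g-\phi)/\lambda=\tilde f/\lambda$ is convex on $\RR^n$.

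\textbf{Main obstacle.} The delicate point is the bookkeeping needed to invoke Lemma~\ref{l:strong}\ref{i:gs}$\Rightarrow$\ref{i:g} for $g=\lambda\averagef+\phi$: one must verify the standing hypotheses of that lemma, namely $g\in\GF$, $\dom g\subseteq\dom\phi$, and $(\reli\dom g)\cap U\neq\varnothing$. The first two come directly from Theorem~\ref{complete}, but the qualification $(\reli\dom g)\cap U\neq\varnothing$ requires combining $\dom\averagef=\alpha\conv(\dom f_1\cap\dom\phi)+(1-\alpha)\conv(\dom f_2\cap\dom\phi)$ with the hypotheses $(\reli\dom f_i)\cap U\neq\varnothing$ and a relative-interior calculus for Minkowski sums; I expect this to be the only non-mechanical step, the rest being a chain of already-established equivalences.
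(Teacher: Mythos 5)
Your proposal is correct and rests on the same two pillars as the paper's proof: the Baillon--Haddad-type characterization of $\phi$-strong convexity via $\nphi$-firmly nonexpansive gradients (Lemma~\ref{l:strong}) and the stability of $\nphi$-firm nonexpansiveness under convex combinations when $\sphi$ is convex (Lemma~\ref{l:convex:c}). The one place you diverge is in identifying $\nabla(\lambda\averagef+\phi)^*$ with $\alpha T_1+(1-\alpha)T_2$: you route through Theorem~\ref{complete}\ref{i:prox:sum} and Proposition~\ref{t:grad}, i.e.\ through the proximal-mapping identity, whereas the paper simply observes that, by construction and Fenchel--Moreau, $(\lambda\averagef+\phi)^*=\alpha(\lambda f_1+\phi)^*+(1-\alpha)(\lambda f_2+\phi)^*$, so its gradient is the convex combination of the $T_i$ immediately. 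Your detour is valid (all the hypotheses it needs, in particular the $C^2$ and positive-definite-Hessian parts of A5 used by Fact~\ref{f:derivative} and Proposition~\ref{t:grad}, are in force), but it makes the argument depend on the Clarke-subdifferential machinery behind \ref{i:prox:sum}, which the paper's direct biconjugation step avoids entirely. Your handling of the qualification $(\reli\dom g)\cap U\neq\varnothing$ via the relative-interior calculus for Minkowski sums is exactly what the paper does (compare \eqref{e:domain}).
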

\begin{proof}
Recall that
\begin{equation}
\averagef=\left[\alpha\left(f_{1}+\frac{1}{\lambda}\phi\right)^*+(1-\alpha)\left(f_{2}+\frac{1}{\lambda}\phi
\right)^*\right]^*-\frac{1}{\lambda}\phi.
\end{equation}
Since $f_{i}+\frac{1}{\lambda}\phi$ is $\phi/\lambda$-strongly convex, by
Lemma~\ref{l:strong}\ref{i:gs}, each $T_{i}=\nabla \left(f_{i}+\frac{1}{\lambda}\phi\right)^*$ is
$\nabla\phi/\lambda$-firmly nonexpansive. Lemma~\ref{l:convex:c} implies
$\alpha\nabla\left(f_{1}+\frac{1}{\lambda}\phi\right)^*+(1-\alpha)\nabla
\left(f_{2}+\frac{1}{\lambda}\phi\right)^*
$
is $\nabla\phi/\lambda$-firmly nonexpansive. Because
$$\dom \left[\alpha\left(f_{1}+\frac{1}{\lambda}\phi\right)^*+(1-\alpha)\left(f_{2}+\frac{1}{\lambda}\phi
\right)^*\right]^*=\alpha (\dom f_{1}\cap\dom\phi)+(1-\alpha) (\dom f_{2}\cap\dom \phi),$$
by the assumption, we have
$\reli[\alpha (\dom f_{1}\cap\dom\phi)+(1-\alpha) (\dom f_{2}\cap\dom \phi)]
\cap U\neq\varnothing.$
Apply Lemma~\ref{l:strong}\ref{i:gs} again to
obtain that
$$\left[\alpha\left(f_{1}+\frac{1}{\lambda}\phi\right)^*+(1-\alpha)\left(f_{2}+\frac{1}{\lambda}\phi
\right)^*\right]^*$$
is $\phi/\lambda$-strongly convex. Hence $\averagef$ is convex by Lemma~\ref{l:strong}\ref{i:g}.
\end{proof}
\begin{remark} Clearly, the joint convexity of $D_{\phi}$ implies the convexity of $S_{\phi}$.
 For conditions on joint convexity of $D_{\phi}$, see \cite{bauschkebor}.
\end{remark}

\begin{corollary}
 Let {\bfseries A1--A5} hold, and let $\sphi$ be convex.
Suppose that $f_{i}\in\GF$ and $(\reli\dom f_{i})\cap U\neq\varnothing$
for $i=1, 2$.  Then $\averagef$ is convex, and
$(\forall x\in U)\ \lprox{\phi}{\averagef}{\lambda}(x)=\alpha \lprox{\phi}{f_{1}}{\lambda}(x)+(1-\alpha)\lprox{\phi}{f_{2}}{\lambda}(x).$
\end{corollary}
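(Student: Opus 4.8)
The plan is to combine the convexity result just proved with the proximal-mapping formula from the main theorem, using the fact that convexity removes the need to take convex hulls. By Theorem~\ref{t:convex}, $\averagef$ is convex under the stated hypotheses, and since $\lambda\averagef+\phi\in\GF$ by Theorem~\ref{complete}\ref{i:breg:conv}, Proposition~\ref{p:prox:e}\ref{i:convexf} gives that $\lprox{\phi}{\averagef}{\lambda}(x)$ is convex-valued for every $x$. Likewise, each $f_i\in\GF$ forces $\lambda f_i+\phi$ to be convex, so $\lprox{\phi}{f_i}{\lambda}(x)$ is convex-valued (indeed single-valued, being the argmin of a strictly convex coercive function), and hence $\conv(\lprox{\phi}{f_i}{\lambda}(x))=\lprox{\phi}{f_i}{\lambda}(x)$.

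First I would invoke Theorem~\ref{complete}\ref{i:prox:sum}, which gives, for all $x\in U$,
\[
\lprox{\phi}{\averagef}{\lambda}(x)=\alpha\conv(\lprox{\phi}{f_{1}}{\lambda}(x))+(1-\alpha)\conv(\lprox{\phi}{f_{2}}{\lambda}(x)).
\]
Then I would simply drop the three $\conv$ operations: on the left because $\lprox{\phi}{\averagef}{\lambda}(x)$ is already convex-valued (so the formula already expresses it as a convex combination), and on the right because each $\lprox{\phi}{f_i}{\lambda}(x)$ is convex-valued by the remarks above. This yields exactly
\[
\lprox{\phi}{\averagef}{\lambda}(x)=\alpha\lprox{\phi}{f_{1}}{\lambda}(x)+(1-\alpha)\lprox{\phi}{f_{2}}{\lambda}(x).
\]
The convexity of $\averagef$ itself is immediate from Theorem~\ref{t:convex}.

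There is essentially no obstacle here: the entire content is bookkeeping, checking that every hypothesis needed for Theorem~\ref{t:convex}, Theorem~\ref{complete}, and Proposition~\ref{p:prox:e}\ref{i:convexf}\&\ref{i:both:conv} is among those assumed in the corollary. The only point worth a word is verifying that $f_i\in\GF$ with $(\reli\dom f_i)\cap U\neq\varnothing$ really does place us in the convex-valued (in fact single-valued, continuous) regime of Proposition~\ref{p:prox:e}\ref{i:both:conv}, so that the right-hand side is a genuine pointwise-defined single-valued map and the identity is an equality of points, not merely of sets. Once that is noted, the proof is a two-line consequence of the cited results.
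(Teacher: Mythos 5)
Your proposal is correct and follows essentially the same route as the paper: convexity from Theorem~\ref{t:convex}, the convex-hull formula from Theorem~\ref{complete}\ref{i:prox:sum}, and Proposition~\ref{p:prox:e}\ref{i:convexf}\&\ref{i:both:conv} to see that each $\lprox{\phi}{f_{i}}{\lambda}(x)$ is convex-valued (indeed single-valued), so the $\conv$ operations are superfluous. The only cosmetic slip is the reference to \enquote{three} $\conv$ operations when the formula in Theorem~\ref{complete}\ref{i:prox:sum} carries only two; this does not affect the argument.
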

\begin{proof}
Apply Theorem~\ref{t:convex}\ref{i:prox:sum} and Proposition~\ref{p:prox:e}\ref{i:both:conv}.
\end{proof}

The example below illustrates that Theorem~\ref{t:convex} fails without the convexity of
$S_{\phi}$.

\begin{example}\label{e:simple}
For $\phi(x)=|x|^3$, simple calculus shows that
$S_{\phi}(x,y)=(3|x|x-3|y|y)(x-y)$ is not convex on $\RP^2$.
Let $\lambda=1$, and let $a>0$, $f_{1} :=\iota_{\{a\}}, f_{2}:\equiv 0$ on $\RR$.
Then
\begin{equation}\label{e:cubic}
\averagef(x)=\alpha |a|^3+ \frac{|x-\alpha a|^3}{(1-\alpha)^2}-|x|^3,
\end{equation}
and $\averagef$ is not convex.
\end{example}
\begin{proof}
Because $f_{1}, f_{2}\in \GF$ and Theorem~\ref{complete}\ref{i:p:exp},
we have
\begin{equation}\label{e:simple:c}
\averagef=\left[\alpha\left(f_{1}+\frac{1}{\lambda}\phi\right)\left(\frac{\cdot}{\alpha}\right)\right]\Box
\left[(1-\alpha)\left(f_{2}+\frac{1}{\lambda}\phi\right)\left(\frac{\cdot}{(1-\alpha)}\right)\right]
-\frac{1}{\lambda}\phi.
\end{equation}
As
$\alpha\left(f_{1}+\phi\right)\left(\frac{\cdot}{\alpha}\right)=\iota_{\{\alpha a\}}+\alpha\phi(a)$ and
$(1-\alpha)\left(f_{2}+\phi\right)\left(\frac{\cdot}{1-\alpha}\right)=
(1-\alpha)\phi\left(\frac{\cdot}{1-\alpha}\right),$
by \eqref{e:simple:c} we have
\begin{align}
\averagef(x) &=\inf_{y}\left\{\iota_{\{\alpha a\}}(y)+\alpha\phi(a)+
(1-\alpha)\phi\left(\frac{x-y}{1-\alpha}\right)\right\}
-\phi(x)\\
&=\alpha\phi(a)+(1-\alpha)\phi\left(\frac{x-\alpha a}{1-\alpha}\right)-\phi(x).
\label{e:closedf}
\end{align}
Equations \eqref{e:cubic} is immediate from \eqref{e:closedf}.

When $x\geq \alpha a$,
$\averagef(x)=\frac{(x-\alpha a)^3}{(1-\alpha)^2}-x^3,
\text{ so } \averagef''(x)=\frac{6(x-\alpha a)}{(1-\alpha)^2}-6x.$
As $x\rightarrow\alpha a$, $\averagef''(x)<0$, so $\averagef$ is not convex.
\end{proof}


It is naturally to ask: If $\averagef$ is convex for all $f_{1}, f_{2}\in\GF$ and $\alpha\in]0,1[$,
what can we say about the Legendre function $\phi$ or $D_{\phi}$? This is partially answered
by the following result on $\RR$.
\begin{proposition} Let {\bfseries A1--A5} hold.
Suppose that $\averagef$ is convex for every $\alpha\in ]0,1[$, $f_{1}, f_{2}\in \GFO$. Then
$D_{\phi}$ is separably convex on $\RR^2$.
\end{proposition}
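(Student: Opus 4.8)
The plan is to feed the hypothesis the simplest informative pair of functions in $\GFO$, namely $f_{1}:=\iota_{\{a\}}$ and $f_{2}:\equiv 0$ for an arbitrary $a\in\dom\phi$, and then to let the weight $\alpha$ shrink to $0$; after a suitable rescaling the average converges pointwise to $D_{\phi}(a,\cdot)$, and convexity is inherited in the limit. First I would observe that since $f_{1},f_{2}\in\GFO$, Corollary~\ref{c:proxbound} gives $\bl=+\infty$, so every $\lambda\in\,]0,+\infty[$ is admissible; moreover multiplying by the positive constant $\lambda$ and adding a constant do not affect convexity, so the statement does not depend on the particular $\lambda$, and I keep $\lambda$ fixed. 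Both $f_{1}$ and $f_{2}$ lie in $\GFO$ and satisfy {\bfseries A3}--{\bfseries A4} (in particular $\dom f_{1}\cap\dom\phi=\{a\}\neq\varnothing$), so the standing hypotheses are in force.

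Next I would write $\averagef$ explicitly. Since $f_{1}+\frac{1}{\lambda}\phi$ and $f_{2}+\frac{1}{\lambda}\phi$ are already convex, Theorem~\ref{complete}\ref{i:p:exp} (evaluating the epi-multiplications and the exact infimal convolution exactly as in Example~\ref{e:simple}, or equivalently Corollary~\ref{c:sets}\ref{i:set:exe} with $C_{1}=\{a\}$ and $C_{2}=\RR$), together with the domain formula in Theorem~\ref{complete}\ref{i:f:dom}, gives $\dom\averagef=\alpha a+(1-\alpha)\dom\phi$ and
\begin{equation*}
\averagef(x)=\frac{1}{\lambda}\left[\alpha\phi(a)+(1-\alpha)\phi\left(\frac{x-\alpha a}{1-\alpha}\right)-\phi(x)\right],\qquad x\in\alpha a+(1-\alpha)\dom\phi .
\end{equation*}
The affine change of variable $x=\alpha a+(1-\alpha)u$ carries $\dom\averagef$ onto $\dom\phi$, and, discarding the positive factor $1/\lambda$ and the additive constant $\alpha\phi(a)/\lambda$, convexity of $\averagef$ is equivalent to convexity, for every $\alpha\in\,]0,1[$, of the function
\begin{equation*}
G_{\alpha}\colon\dom\phi\to\RR,\qquad G_{\alpha}(u):=(1-\alpha)\phi(u)-\phi(\alpha a+(1-\alpha)u).
\end{equation*}

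Finally I would divide by $\alpha>0$ (still convex), restrict attention to the open interval $U$, and pass to the limit $\alpha\downarrow 0$. Writing $G_{\alpha}(u)/\alpha=(\phi(u)-\phi(u+\alpha(a-u)))/\alpha-\phi(u)$ and using differentiability of $\phi$ on $U$ ({\bfseries A1}, {\bfseries A5}),
\begin{equation*}
\lim_{\alpha\downarrow 0}\frac{G_{\alpha}(u)}{\alpha}=-\phi'(u)(a-u)-\phi(u)=D_{\phi}(a,u)-\phi(a),\qquad u\in U .
\end{equation*}
A pointwise limit of convex functions on the interval $U$ is convex, so $u\mapsto D_{\phi}(a,u)-\phi(a)$, hence $u\mapsto D_{\phi}(a,u)$, is convex on $U$; since $D_{\phi}(a,\cdot)\equiv+\infty$ off $U$, it is convex on all of $\RR$. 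As $a\in\dom\phi$ was arbitrary and $D_{\phi}(a,\cdot)\equiv+\infty$ whenever $a\notin\dom\phi$, the partial function $y\mapsto D_{\phi}(x,y)$ is convex for every $x\in\RR$; convexity of $x\mapsto D_{\phi}(x,y)$ for fixed $y$ is immediate, since $D_{\phi}(\cdot,y)$ is $\phi$ plus an affine function on $\dom\phi$ (and $\equiv+\infty$ for $y\notin U$). Hence $D_{\phi}$ is separably convex on $\RR^{2}$.

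I do not expect a genuine obstacle here: the only step needing care is interchanging ``convex for each fixed $\alpha$'' with the limit $\alpha\downarrow 0$, which is just closedness of the cone of convex functions on the convex set $U$ under pointwise limits, together with convergence of the difference quotient of $\phi$ at points of $U$; the rest is bookkeeping of the explicit form of $\averagef$ and its effective domain, already supplied by Theorem~\ref{complete}\ref{i:p:exp}--\ref{i:f:dom} (equivalently Corollary~\ref{c:sets}).
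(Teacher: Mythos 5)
Your proof is correct, and while it starts exactly as the paper does --- testing the hypothesis on $f_{1}=\iota_{\{a\}}$, $f_{2}\equiv 0$ and arriving at the same explicit formula $\averagef(\alpha a+(1-\alpha)u)=\frac{1}{\lambda}\bigl(\alpha\phi(a)+(1-\alpha)\phi(u)-\phi(\alpha a+(1-\alpha)u)\bigr)$ --- it finishes by a genuinely different route. The paper differentiates $g(u)=\alpha\phi(a)+(1-\alpha)\phi(u)-\phi(\alpha a+(1-\alpha)u)$ twice, rearranges $g''\ge 0$ into a difference quotient of $\phi''$, lets $\alpha\downarrow 0$ to obtain the pointwise inequality $\phi''(y)\ge\phi'''(y)(a-y)$, and then invokes the characterization of separable convexity of $D_{\phi}$ from \cite[Theorem~3.3(ii)]{bauschkebor}. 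You instead divide $G_{\alpha}$ by $\alpha$ and pass to the pointwise limit $\alpha\downarrow 0$ directly at the level of the functions, exhibiting $D_{\phi}(a,\cdot)-\phi(a)$ on $U$ as a pointwise limit of convex functions; convexity of $D_{\phi}(\cdot,y)$ is then the trivial half. Your version buys two things: it needs only the first derivative of $\phi$ (the paper's computation uses $\phi'''$, which is not guaranteed by \textbf{A5}, where $\phi$ is only assumed twice continuously differentiable), and it is self-contained, whereas the paper outsources the last step to an external characterization. The one step you flag yourself --- closedness of convexity under pointwise limits on the interval $U$ and the extension to all of $\RR$ via $D_{\phi}(a,\cdot)\equiv+\infty$ off $U$ --- is handled correctly.
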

\begin{proof} Note that
\begin{equation}\label{e:p:formula}
\averagef=\left[\alpha\left(f_{1}+\frac{1}{\lambda}\phi\right)\left(\frac{\cdot}{\alpha}\right)\right]\Box
\left[(1-\alpha)\left(f_{2}+\frac{1}{\lambda}\phi\right)\left(\frac{\cdot}{(1-\alpha)}\right)\right]
-\frac{1}{\lambda}\phi.
\end{equation}
Let $f_{1}=\iota_{\{p\}}$ where $p\in \dom\phi$, and $f_{2}\equiv 0$. \eqref{e:p:formula}
gives
$$(\forall y\in U)\ \averagef(\alpha p+(1-\alpha)y)
=\frac{1}{\lambda}\bigg(\alpha\phi(p)+(1-\alpha)\phi(y)-\phi(\alpha p+(1-\alpha)y)\bigg).$$
Put $g(y)=\alpha\phi(p)+(1-\alpha)\phi(y)-\phi(\alpha p+(1-\alpha)y)$. By the assumption,
$g$ is convex for every $\alpha\in ]0,1[$, so
$(\forall y\in U)\ g''(y)=(1-\alpha)\phi''(y)-(1-\alpha)^2\phi''(\alpha p+(1-\alpha)y)\geq 0.$
This implies $\phi''(y)\geq (1-\alpha)\phi''(\alpha p+(1-\alpha)y)$, from which
$$\phi''(y)-(1-\alpha) \phi''(y)\geq (1-\alpha)[\phi''(\alpha p+(1-\alpha)y)-\phi''(y)],$$
$$\phi''(y)\geq (1-\alpha)\frac{\phi''(y+\alpha (p-y))-\phi''(y)}{\alpha}.$$
When $\alpha\downarrow 0$, we obtain $\phi''(y)\geq \phi'''(y)(p-y),$ whence
$D_{\phi}$ is separably convex by \cite[Theorem 3.3(ii)]{bauschkebor}.
\end{proof}

\section{Duality via Combettes and Reyes' anisotropic envelope and proximity operator}\label{s:duality}
The Combettes-Reyes
anisotropic envelope and proximity operator are essential
in the study of the Fenchel conjugate of Bregman proximal averages.

%
%

\begin{theorem}[Duality of Bregman proximal average] Let {\bfseries A1--A5} hold, and
let $f_{i}\in\GF$ for $i=1,2$. Then the following hold:
\begin{enumerate}
\item\label{i:primal} Suppose that $(\forall i)\ (\reli\dom f_{i})\cap U\neq\varnothing$,
and that $D_{\phi}$ is jointly convex. Then
 the anisotropic envelope and proximal mapping
of $\averagef^*$ satisfy
\begin{equation}\label{e:primal1}
\averagef^*\square (1/\lambda\timess\phi^*)=
\alpha f_{1}^*\square(1/\lambda \timess\phi^*)+(1-\alpha) f_{2}^*\square
(1/\lambda \timess\phi^*),
\end{equation}
and $\forall x^*\in \RR^n$,
\begin{align}\label{e:primal2}
& \nabla \phi^*\left(\lambda(x^*-\aprox{1/\lambda\timess\phi^*}{\averagef^*}(x^*))\right)\\
&=
\alpha \nabla \phi^*\left(\lambda(x^*-\aprox{1/\lambda\timess\phi^*}{f_{1}^*}(x^*))\right)
+(1-\alpha)\nabla \phi^*\left(\lambda(x^*-\aprox{1/\lambda\timess\phi^*}{f_{2}^*}(x^*))\right).
\nonumber
\end{align}
\item\label{i:dual} Suppose that $D_{\phi^*}$ is jointly convex. Then
 the anisotropic envelope and proximal mapping
of $\averagefd^*$ satisfy
\begin{equation}\label{e:dual1}
\averagefd^*\square (\lambda\timess\phi)=
\alpha f_{1}\square(\lambda \timess\phi)+(1-\alpha) f_{2}\square
(\lambda \timess\phi),
\end{equation}
and $\forall x\in [\alpha(\dom f_{1}^*)+(1-\alpha)(\dom f_{2}^*)+\lambda\IDD]$,
\begin{align}\label{e:dual2}
& \nabla\phi\left((x-\aprox{\lambda\timess\phi}{\averagefd}(x))/\lambda\right)\\
& =
\alpha \nabla \phi\left((x-\aprox{\lambda\timess\phi}{f_{1}}(x))/\lambda\right)
+(1-\alpha)\nabla \phi\left((x-\aprox{\lambda\timess\phi}{f_{2}}(x))/\lambda)\right).\nonumber
\end{align}
\end{enumerate}
\end{theorem}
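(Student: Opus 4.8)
The plan is to derive both parts from the expression for $\averagef$ in \eqref{d:pave} together with the duality formula of Proposition~\ref{t:prox:aniso} and Corollary~\ref{c:moreau:p}, which already translate left Bregman proximal quantities into anisotropic (Combettes--Reyes) quantities. First, for part~\ref{i:primal}, I would observe that joint convexity of $D_\phi$ forces $S_\phi$ to be convex (the remark after Theorem~\ref{t:convex}), so Theorem~\ref{t:convex} applies and $\averagef\in\GF$; in particular $\averagef^*$ is a genuine conjugate and all the envelopes below are proper. Taking conjugates in \eqref{d:pave} (using that $\phi/\lambda$ is added, and $\lambda f_i+\phi$ is $1$-coercive so all biconjugates collapse, exactly as in the proof of Theorem~\ref{complete}\ref{i:p:exp}), one gets
\[
\left(\averagef+\tfrac1\lambda\phi\right)^* = \alpha\Bigl(f_1+\tfrac1\lambda\phi\Bigr)^* + (1-\alpha)\Bigl(f_2+\tfrac1\lambda\phi\Bigr)^*.
\]
The key calculus fact is that for $f\in\GF$ with $(\reli\dom f)\cap U\neq\varnothing$ one has $(f+\tfrac1\lambda\phi)^* = f^*\square(\tfrac1\lambda\timess\phi^*)$ (this is \ref{i:g}$\Leftrightarrow$\ref{i:gdual} of Lemma~\ref{l:strong} applied to $g=f+\tfrac1\lambda\phi$, with the scaling handled by Lemma~\ref{l:simple}); substituting this on both sides gives \eqref{e:primal1} directly. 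For \eqref{e:primal2}, I would differentiate \eqref{e:primal1}: the gradient of each anisotropic envelope $h^*\square(\tfrac1\lambda\timess\phi^*)$ at $x^*$ is, by \cite[Proposition 16.61(i)]{BC} together with the computation in \eqref{e:gconj}, equal to $\nabla\phi^*\bigl(\lambda(x^*-\aprox{1/\lambda\timess\phi^*}{h^*}(x^*))\bigr)$; since the sum of differentiable convex functions is differentiable with gradient the sum, \eqref{e:primal2} follows.

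For part~\ref{i:dual} the idea is the same but applied to the \emph{dual} proximal average $\averagefd$, built from $f_1^*,f_2^*$ and the Legendre function $\phi^*$ with parameter $1/\lambda$. Joint convexity of $D_{\phi^*}$ plays the role that joint convexity of $D_\phi$ played before, ensuring $\averagefd\in\GF$ so that $\averagefd^*$ is well behaved. Conjugating the definition of $\averagefd$ as above yields
\[
\left(\averagefd + \lambda\phi^*\right)^* = \alpha\bigl(f_1^* + \lambda\phi^*\bigr)^* + (1-\alpha)\bigl(f_2^* + \lambda\phi^*\bigr)^*,
\]
and here $(f_i^* + \lambda\phi^*)^* = f_i^{**}\square(\lambda\timess\phi^{**}) = f_i\square(\lambda\timess\phi)$, giving \eqref{e:dual1}. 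The domain restriction $x\in\alpha(\dom f_1^*)+(1-\alpha)(\dom f_2^*)+\lambda U$ in \eqref{e:dual2} is exactly $\inte\dom$ of the right-hand side of \eqref{e:dual1}, computed via Proposition~\ref{p:aniso:env}\ref{i:aniso:d} ($\dom f\square(\lambda\timess\phi)=\dom f + \lambda\dom\phi$ and the essential smoothness gives differentiability on the interior of the sum); on that set each summand is differentiable, and differentiating \eqref{e:dual1} with the same gradient formula for anisotropic envelopes (now with $\phi$ in place of $\phi^*$ and parameter $\lambda$) produces \eqref{e:dual2}.

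The routine parts are the conjugation/biconjugation bookkeeping and the scaling identities (epi-multiplication commuting with conjugation), which are mechanical given Lemma~\ref{l:simple} and \cite[Theorem 16.4]{Rock70}. The main obstacle, and the step deserving real care, is justifying term-by-term differentiation of \eqref{e:primal1} and \eqref{e:dual1} on the correct domains: one must check that each anisotropic envelope is finite, convex, and differentiable on the relevant open set before the gradient of a sum may be split, and that the gradient formula $\nabla\bigl(h^*\square(\tfrac1\lambda\timess\phi^*)\bigr)(x^*)=\nabla\phi^*\bigl(\lambda(x^*-\aprox{1/\lambda\timess\phi^*}{h^*}(x^*))\bigr)$ is valid there — this is where the hypotheses $(\reli\dom f_i)\cap U\neq\varnothing$ and the joint convexity of $D_\phi$ (resp.\ $D_{\phi^*}$) are genuinely used, the latter to guarantee that $\averagef$ (resp.\ $\averagefd$) lands back in $\GF$ so that $\nabla\averagef^*$ makes sense and Proposition~\ref{t:prox:aniso} is applicable to $\averagef^*$ itself.
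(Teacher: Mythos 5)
Your proposal is correct and follows essentially the same route as the paper: establish $(\lambda\averagef+\phi)^*=\alpha(\lambda f_{1}+\phi)^*+(1-\alpha)(\lambda f_{2}+\phi)^*$, convert each term into an anisotropic envelope via the exact infimal-convolution formula of \cite[Theorem 16.4]{Rock70} (using the relative-interior hypothesis and the convexity of $\averagef$ supplied by Theorem~\ref{t:convex}), differentiate with \cite[Proposition 16.61(i)]{BC}, and obtain part (ii) by applying part (i) to $f_{i}^*$, $\phi^*$, $1/\lambda$. The only divergence is that you derive the conjugate identity by direct biconjugation of the definition \eqref{d:pave} (legitimate, since the inner convex combination of conjugates is finite-valued convex, hence closed), whereas the paper routes it through the Bregman-envelope sum formula of Theorem~\ref{complete}\ref{i:env:sum} and Fact~\ref{f:env:conj}; both are valid.
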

\begin{proof}
\ref{i:primal}: By Fact~\ref{f:env:conj},
$\phi^*=(\lambda f_{i}+\phi)^*+\lambda\lenv{\phi}{f_{i}}{\lambda}\circ\phi^*$. Multiplying
both sides by $\alpha$ for $i=1$, and $(1-\alpha)$ for $i=2$, followed by adding both equations,
we have
$$\phi^*-\lambda(\alpha\lenv{\phi}{f_{1}}{\lambda}\circ\phi^*
+(1-\alpha)\lenv{\phi}{f_{2}}{\lambda}\circ\phi^*)=\alpha(\lambda f_{1}+\phi)^*
+(1-\alpha)(\lambda f_{2}+\phi)^*.$$
Theorem~\ref{complete}\ref{i:env:sum} gives
$\phi^*-\lambda\lenv{\phi}{\averagef}{\lambda}\circ\phi^*=\alpha(\lambda f_{1}+\phi)^*
+(1-\alpha)(\lambda f_{2}+\phi)^*.$
Use Fact~\ref{f:env:conj} again to obtain
\begin{equation}\label{e:conj}
(\lambda\averagef+\phi)^*=\alpha(\lambda f_{1}+\phi)^*
+(1-\alpha)(\lambda f_{2}+\phi)^*.
\end{equation}
Since $(\reli\dom f_{i})\cap U\neq \varnothing$ for $i=1,2$, by \cite[Theorem 16.4]{Rock70}
we can write
\begin{equation}\label{e:conj:exact1}
(\lambda f_{i}+\phi)^*=\lambda\timess(f_{i}^*\square (1/\lambda\timess\phi^*)),
\end{equation}
where the $\square$ is exact.
Moreover,
as $\dom\averagef=\alpha \dom f_{1}\cap\dom\phi+(1-\alpha)\dom f_{2}\cap\dom\phi$
by Theorem~\ref{complete}\ref{i:f:dom},
in view of  \cite[Theorems 6.5, 6.6]{Rock70} we have
\begin{align}\label{e:domain}
\reli \dom\averagef &=\alpha \reli(\dom f_{1}\cap\dom\phi)+(1-\alpha)
\reli(\dom f_{2}\cap\dom\phi)\\
&=\alpha (\reli\dom f_{1})\cap U+(1-\alpha)(\reli\dom f_{2})\cap U
\subseteq U.\nonumber
\end{align}
Because $D_{\phi}$ is jointly convex, $\averagef$ is convex by Theorem~\ref{t:convex}.
In view of \eqref{e:domain}, it follows from \cite[Theorem 16.4]{Rock70} that
\begin{equation}\label{e:conj:exact2}
(\lambda \averagef+\phi)^* =\lambda\timess(\averagef^*\square (1/\lambda\timess\phi^*)),
\end{equation}
and $\square$ is exact. Combining~\eqref{e:conj}, \eqref{e:conj:exact1}, and
\eqref{e:conj:exact2} gives \eqref{e:primal1}.

Since $\phi^*$ is differentiable, \cite[Proposition 16.61(i)]{BC} or \cite[Lemma 2.1]{penot}
gives
\begin{align}
\nabla [f_{i}^*\square(1/\lambda \timess\phi^*)](x^*)& = \nabla(1/\lambda\timess \phi^*)\left(x^*-\aprox{1/\lambda\timess\phi^*}{f_{i}^*}(x^*)\right)\\
& =\nabla\phi^*\left(\lambda(x^*-\aprox{1/\lambda\timess\phi^*}{f_{i}^*}(x^*))\right), \text{ and }
\end{align}
\begin{align}
\nabla [\averagef^*\square(1/\lambda \timess\phi^*)](x^*)& =\nabla(1/\lambda\timess \phi^*)\left(x^*-\aprox{1/\lambda\timess\phi^*}{\averagef^*}(x^*)\right)\\
&= \nabla \phi^*\left(\lambda(x^*-\aprox{1/\lambda\timess\phi^*}{\averagef^*}(x^*))
\right).
\end{align}
Hence,
\eqref{e:primal2} follows from \eqref{e:primal1} by taking derivatives
both sides.

\ref{i:dual}:
Note that $\dom\phi^*=\RR^n$.
Apply \ref{i:primal} with $f_{i}$ replaced by
$f_{i}^*$, $\phi$ by $\phi^*$ and $\lambda$ by $1/\lambda$, followed by using
Theorem~\ref{complete}\ref{i:f:dom} and
Proposition~\ref{p:aniso:env}\ref{i:aniso:d}.
\end{proof}

\begin{remark}(1). $D_{\phi}$ jointly convex does not mean $D_{\phi^*}$ jointly convex. For example,
for $\phi(x)=x\ln x-x$ if $x\geq 0$ and $+\infty$ otherwise, and $\phi^*(x)=\exp(x)$,
$D_{\phi}$ is jointly convex, but $D_{\phi^*}$ is not. (2). In general,
$\averagefd^*\neq \averagef$ because the latter might not be convex.
While the anisotropic envelope of $\averagefd^*$ is the convex combination
of anisotropic envelopes of $f_{i}$'s,  the Bregman envelope of $\averagef$ is the convex combination
of Bregman envelopes of $f_{i}$'s.
\end{remark}
\begin{remark}
Note that $(\forall f\in\GF)(\forall x^*\in\RR^n)\
\nabla\phi^*(x^*-\aprox{\phi^*}{f^*}(x^*))=\lprox{\phi}{f}{1}(\nabla\phi^*(x^*))$
by Proposition~\ref{t:prox:aniso}.
Thus, \eqref{e:primal2} is essentially an identity for proximal mappings, and
the same can be said for \eqref{e:dual2}.
\end{remark}

\section{Epi-continuity}\label{s:epiconv}
This section is devoted to the epi-convergence behaviors of $\averagef$ when parameters
$\lambda$ and $\alpha$ vary.
\begin{definition}
A sequence of functions $(f_{k})_{k\in \NN}$ from $\RR^n\rightarrow\RX$
epi-converges to $f$ at a point $x\in\RR^n$ if both of the following conditions are satisfied:
\begin{enumerate}
\item whenever $(x_k)_{k\in\NN}$ converges to $x$, we have
$f(x)\leq\liminf_{k\rightarrow\infty} f_{k}(x_{k})$;
\item there exists a sequence $(x_{k})_{k\in\NN}$ converges to $x$ with
$f(x)=\lim_{k\rightarrow\infty}f_{k}(x_{k})$.
\end{enumerate}
If $(f_{k})_{k\in \NN}$
epi-converges to $f$ at every $x\in C\subseteq\RR^n$, we say
$(f_{k})_{k\in \NN}$
epi-converges to $f$ on $C$. In the case of $C=\RR^n$, the functions $f_{k}$ are
 said to epi-converge to $f$, denoted by $f_{k}\epi f$.
\end{definition}
See \cite[pages 241-243]{Rock98} or \cite[page 159]{beer} for further details on epi-convergence.

\begin{theorem}[epi-continuity I of Bregman proximal average]
Let {\bfseries A1--A5} hold.
Then the following hold:
\begin{enumerate}
\item\label{i:0} As $\alpha\downarrow 0$, $\averagef\epi\lhul{\phi}{f_{2}}{\lambda}$ on $U$.
\item\label{i:1} As $\alpha\uparrow 1$, $\averagef\epi\lhul{\phi}{f_{1}}{\lambda}$ on $U$.
\end{enumerate}
In particular, when $f_{1}, f_{2}\in\GF$, we have
\begin{enumerate}[(a)]
\item\label{i:c:0} As $\alpha\downarrow 0$, $\averagef\epi f_{2}$ on $U$.
\item\label{i:c:1} As $\alpha\uparrow 1$, $\averagef\epi f_{1}$ on $U$.
\end{enumerate}
\end{theorem}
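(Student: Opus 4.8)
The plan is to reduce everything, via the defining formula \eqref{d:pave}, to a statement about the \emph{convex} function $F_\alpha := \averagef + \frac1\lambda\phi$ and then exploit that epi-convergence is preserved by the Legendre--Fenchel transform. Write $g_i := \conv(f_i + \frac1\lambda\phi)$; as in the proof of Theorem~\ref{complete}\ref{i:p:exp}, $f_i + \frac1\lambda\phi$ is $1$-coercive (Proposition~\ref{p:bound}\ref{i:boundl}), so $g_i = (f_i+\frac1\lambda\phi)^{**}\in\GF$ and $g_i^* = (f_i+\frac1\lambda\phi)^*$ is real-valued on $\RR^n$. Since $\dom F_\alpha = \dom\averagef\subseteq\dom\phi$ by Theorem~\ref{complete}\ref{i:f:dom}, the convention $\infty-\infty=\infty$ causes no trouble and one gets the global identity $F_\alpha = [\alpha g_1^* + (1-\alpha)g_2^*]^*$ on $\RR^n$, with $F_\alpha\in\GF$ by Theorem~\ref{complete}\ref{i:breg:conv}. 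Finally, Proposition~\ref{p:phi:hull}\ref{i:h:c} gives $\lhul{\phi}{f_i}{\lambda} + \frac1\lambda\phi = (f_i+\frac1\lambda\phi)^{**} = g_i$, so the target limits are exactly $g_2$ (as $\alpha\downarrow0$) and $g_1$ (as $\alpha\uparrow1$), modulo the $-\frac1\lambda\phi$.

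Next I would run the duality argument. As $\alpha\downarrow 0$ the finite convex functions $\alpha g_1^* + (1-\alpha)g_2^*$ converge pointwise on $\RR^n$ to the finite convex function $g_2^*$; by \cite[Theorem 7.17]{Rock98} this forces epi-convergence $\alpha g_1^* + (1-\alpha)g_2^* \epi g_2^*$ (indeed uniform convergence on bounded sets). All functions here are proper lsc convex, so Wijsman's theorem \cite[Theorem 11.34]{Rock98} applies and yields $[\alpha g_1^* + (1-\alpha)g_2^*]^* \epi (g_2^*)^* = g_2$, i.e.\ $F_\alpha \epi g_2$ on $\RR^n$ as $\alpha\downarrow 0$. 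The case $\alpha\uparrow 1$ is the same with the roles of $f_1$ and $f_2$ interchanged, giving $F_\alpha\epi g_1$.

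To conclude \ref{i:0} and \ref{i:1} I would subtract $\frac1\lambda\phi$. Because $\phi$ is finite and continuous on the open set $U$ (A1, A5), for any $x\in U$ and any $x_k\to x$ one has $x_k\in U$ and $\phi(x_k)\to\phi(x)\in\RR$ eventually; substituting this into both the $\liminf$-inequality and the recovery-sequence condition defining $F_\alpha\epi g_i$ shows $\averagef = F_\alpha - \frac1\lambda\phi \epi g_i - \frac1\lambda\phi = \lhul{\phi}{f_i}{\lambda}$ on $U$. For the particular cases, $f_i\in\GF$ together with A4 gives $f_i+\frac1\lambda\phi\in\GF$, hence $\conv(f_i+\frac1\lambda\phi) = f_i+\frac1\lambda\phi$ and Proposition~\ref{p:phi:hull}\ref{i:h:c} gives $\lhul{\phi}{f_i}{\lambda} = f_i + \iota_{\dom\phi}$, which equals $f_i$ on $U\subseteq\dom\phi$; since epi-convergence on $U$ only involves the values of the limit function on $U$, \ref{i:c:0} and \ref{i:c:1} follow at once. (Here $\lambda_{f_i}=+\infty$ by Corollary~\ref{c:proxbound}, so A3--A4 impose no further constraint on $\lambda$.)

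The main obstacle is not the duality machinery — the pointwise-to-epi step and Wijsman's theorem are routine once one is in the purely convex setting — but rather the two bookkeeping points: verifying that $F_\alpha = [\alpha g_1^*+(1-\alpha)g_2^*]^*$ holds on \emph{all} of $\RR^n$ in the presence of the $\infty-\infty$ convention (which rests on the domain formula Theorem~\ref{complete}\ref{i:f:dom}), and the fact that one can only pass from epi-convergence of $F_\alpha$ on $\RR^n$ to epi-convergence of $\averagef$ on $U$, not on $\RR^n$, since $\frac1\lambda\phi$ may jump on $\dom\phi\setminus U$. This last point is exactly why the statement is localized to $U$.
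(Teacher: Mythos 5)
Your proposal is correct and follows essentially the same route as the paper: pointwise convergence of the finite convex functions $\alpha(f_1+\tfrac1\lambda\phi)^*+(1-\alpha)(f_2+\tfrac1\lambda\phi)^*$ gives epi-convergence via \cite[Theorem 7.17]{Rock98}, Wijsman's theorem \cite[Theorem 11.34]{Rock98} transfers this to the conjugates, and subtracting the continuous function $\tfrac1\lambda\phi$ on the open set $U$ yields the claim, with Proposition~\ref{p:phi:hull}\ref{i:h:c} identifying the limit as the proximal hull. Your extra bookkeeping on the $\infty-\infty$ convention and on why the conclusion is localized to $U$ is consistent with, and slightly more explicit than, the paper's argument.
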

\begin{proof}
\ref{i:0}: By Proposition~\ref{p:bound}\ref{i:boundl},
each $f_{i}+\frac{1}{\lambda}\phi$ is $1$-coercive so that
its Fenchel conjugate
$\left(f_{i}+\frac{1}{\lambda}\phi\right)^*$
has a full domain. When $\alpha\downarrow 0$,
$$\left[\alpha\left(f_{1}+\frac{1}{\lambda}\phi\right)^*+(1-\alpha)\left(f_{2}+\frac{1}{\lambda}\phi
\right)^*\right]\rightarrow \left(f_{2}+\frac{1}{\lambda}\phi
\right)^*$$
pointwise, so epi-converges by \cite[Theorem 7.17]{Rock98}. By \cite[Theorem 11.34]{Rock98},
$$\left[\alpha\left(f_{1}+\frac{1}{\lambda}\phi\right)^*+(1-\alpha)\left(f_{2}+\frac{1}{\lambda}\phi
\right)^*\right]^*$$
epi-converges to $\left(f_{2}+\frac{1}{\lambda}\phi
\right)^{**}$ on $\RR^n$, so epi-converges at every point of $U$. Since $\phi$ is continuous on $U$, in view of \cite[Exercise 7.8]{Rock98},
$\left[\alpha\left(f_{1}+\frac{1}{\lambda}\phi\right)^*+(1-\alpha)\left(f_{2}+\frac{1}{\lambda}\phi
\right)^*\right]^*-\frac{1}{\lambda}\phi$
epi-converges to
$\left(f_{2}+\frac{1}{\lambda}\phi
\right)^{**}-\frac{1}{\lambda}\phi \text{ on $U$,}
$
when $\alpha\downarrow 0$.

\ref{i:1}: The proof is analogous to that of \ref{i:0}. Finally, \ref{i:c:0}\&\ref{i:c:1} hold because
Proposition\ref{p:phi:hull}\ref{i:h:c} implies
$\lhul{\phi}{f_{i}}{\lambda}=f_{i}$
on $\IDD$ when $f_{i}\in \GF$.
\end{proof}

The next result shows that the Bregman proximal average lies between
the epi-average of convexified individual functions and the arithmetic
average of individual functions.
\begin{theorem}\label{t:squeeze} Let {\bfseries A1--A5} hold.
Then the following hold:
\begin{enumerate}
\item\label{i:infimal}
$\averagef\geq \left[\alpha \conv f_{1}\left(\frac{\cdot}{\alpha}\right)\right]\Box \left[
(1-\alpha) \conv f_{2}\left(\frac{\cdot}{1-\alpha}\right)\right].$
\item\label{i:arithm}
$\averagef\leq \alpha f_{1}+(1-\alpha)f_{2} \text{ on $\dom\phi$.} $ In particular,
$\averagef\leq \alpha f_{1}+(1-\alpha)f_{2} $ if $\dom f_{1} \cap \dom f_{2}\subseteq\dom\phi$.
\end{enumerate}
\end{theorem}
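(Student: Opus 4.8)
The plan is to read both inequalities directly off the exact infimal-convolution representation in Theorem~\ref{complete}\ref{i:p:exp}. Unwinding the epi-multiplications and the infimal convolution there, that representation says (for $0<\alpha<1$) that for every $x$,
$$\averagef(x)=\inf\Big\{\alpha\conv\big(f_{1}+\tfrac{1}{\lambda}\phi\big)(y_{1})+(1-\alpha)\conv\big(f_{2}+\tfrac{1}{\lambda}\phi\big)(y_{2}):\alpha y_{1}+(1-\alpha)y_{2}=x\Big\}-\tfrac{1}{\lambda}\phi(x),$$
with the convention $\infty-\infty=\infty$. Since $\averagef\equiv+\infty$ on $\RR^{n}\setminus\dom\phi$, statement \ref{i:infimal} is trivial there, while statement \ref{i:arithm} is only asserted on $\dom\phi$ (its ``in particular'' clause being disposed of at the end); so throughout I may take $x\in\dom\phi$, where $\phi(x)\in\RR$ and the $\pm\tfrac{1}{\lambda}\phi(x)$ terms cancel cleanly.

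For \ref{i:infimal} I would use two observations. First, $\conv\big(f_{i}+\tfrac{1}{\lambda}\phi\big)\geq\conv f_{i}+\tfrac{1}{\lambda}\phi$, because the right-hand side is a convex minorant of $f_{i}+\tfrac{1}{\lambda}\phi$ and $\conv(\cdot)$ is the largest such. Second, convexity of $\phi$ gives $\alpha\phi(y_{1})+(1-\alpha)\phi(y_{2})\geq\phi(\alpha y_{1}+(1-\alpha)y_{2})=\phi(x)$ for every admissible pair. Combining these, each admissible pair with $y_{1},y_{2}\in\dom\phi$ satisfies
$$\alpha\conv\big(f_{1}+\tfrac{1}{\lambda}\phi\big)(y_{1})+(1-\alpha)\conv\big(f_{2}+\tfrac{1}{\lambda}\phi\big)(y_{2})-\tfrac{1}{\lambda}\phi(x)\ \geq\ \alpha\conv f_{1}(y_{1})+(1-\alpha)\conv f_{2}(y_{2}),$$
and pairs with $y_{1}$ or $y_{2}$ outside $\dom\phi$ make the left side $+\infty$ (since $\dom\conv(f_{i}+\tfrac{1}{\lambda}\phi)\subseteq\conv(\dom f_{i}\cap\dom\phi)\subseteq\dom\phi$), so the inequality holds for all admissible pairs. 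Taking the infimum over admissible pairs turns the left side into $\averagef(x)$ and the right side into $\big(\big[\alpha\timess\conv f_{1}\big]\Box\big[(1-\alpha)\timess\conv f_{2}\big]\big)(x)$, which is precisely \ref{i:infimal}.

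For \ref{i:arithm} I would feed the single admissible choice $y_{1}=y_{2}=x$ into the representation and use $\conv h\leq h$:
$$\averagef(x)\ \leq\ \alpha\conv\big(f_{1}+\tfrac{1}{\lambda}\phi\big)(x)+(1-\alpha)\conv\big(f_{2}+\tfrac{1}{\lambda}\phi\big)(x)-\tfrac{1}{\lambda}\phi(x)\ \leq\ \alpha f_{1}(x)+(1-\alpha)f_{2}(x),$$
the last step being the cancellation $\alpha\big(f_{1}(x)+\tfrac{1}{\lambda}\phi(x)\big)+(1-\alpha)\big(f_{2}(x)+\tfrac{1}{\lambda}\phi(x)\big)-\tfrac{1}{\lambda}\phi(x)=\alpha f_{1}(x)+(1-\alpha)f_{2}(x)$ for $x\in\dom\phi$. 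For the ``in particular'' clause: if $\dom f_{1}\cap\dom f_{2}\subseteq\dom\phi$ then every $x\notin\dom\phi$ lies outside $\dom f_{1}$ or outside $\dom f_{2}$, whence $\alpha f_{1}(x)+(1-\alpha)f_{2}(x)=+\infty=\averagef(x)$, so the inequality extends from $\dom\phi$ to all of $\RR^{n}$.

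I do not expect a real obstacle here; the argument is short once Theorem~\ref{complete}\ref{i:p:exp} is in hand. The only care needed is the extended-real bookkeeping — that $\conv f_{i}$ may take the value $-\infty$ (which merely weakens \ref{i:infimal}, never breaks it) and that $\infty-\infty$ is read as $+\infty$ — together with the behavior on $\RR^{n}\setminus\dom\phi$, both absorbed by first isolating that set. As an alternative for \ref{i:arithm} that bypasses Theorem~\ref{complete}\ref{i:p:exp}, one can argue from \eqref{d:pave} directly: with $g_{i}:=\big(f_{i}+\tfrac{1}{\lambda}\phi\big)^{*}$ one has $\averagef+\tfrac{1}{\lambda}\phi=\big[\alpha g_{1}+(1-\alpha)g_{2}\big]^{*}$, and the elementary estimate $\big[\alpha g_{1}+(1-\alpha)g_{2}\big]^{*}\leq\alpha g_{1}^{*}+(1-\alpha)g_{2}^{*}$ together with $g_{i}^{*}=\conv\big(f_{i}+\tfrac{1}{\lambda}\phi\big)\leq f_{i}+\tfrac{1}{\lambda}\phi$ yields \ref{i:arithm} on $\dom\phi$ at once.
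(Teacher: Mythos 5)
Your proposal is correct and follows essentially the same route as the paper: both arguments start from the exact infimal-convolution representation in Theorem~\ref{complete}\ref{i:p:exp}, use $\conv\big(f_{i}+\tfrac{1}{\lambda}\phi\big)\geq\conv f_{i}+\tfrac{1}{\lambda}\phi$ together with the convexity of $\phi$ (your pointwise inequality $\alpha\phi(y_{1})+(1-\alpha)\phi(y_{2})\geq\phi(x)$ is exactly the paper's identity $\big[\alpha\timess\phi\big]\Box\big[(1-\alpha)\timess\phi\big]=\phi$) for \ref{i:infimal}, and plug in the feasible decomposition $y_{1}=y_{2}=x$ with $\conv h\leq h$ for \ref{i:arithm}. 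The only differences are cosmetic (you write the infimal convolution out pointwise rather than manipulating it as an operation) plus some extra, correct bookkeeping on $\RR^{n}\setminus\dom\phi$ and on the \emph{in particular} clause.
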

\begin{proof}
\ref{i:infimal}:
Because $\phi$ is convex, we have
$\left[\alpha\phi\left(\frac{\cdot}{\alpha}\right)\right]\Box \left[
(1-\alpha)\phi\left(\frac{\cdot}{1-\alpha}\right)\right]
=\phi$ and $\conv\phi =\phi$.
It follows from Theorem~\ref{complete}\ref{i:p:exp} that $\averagef$
\begin{align*}
 &=\left[\alpha \conv \left(f_{1}+\frac{1}{\lambda}\phi\right)\left(\frac{\cdot}{\alpha}\right)\right]\Box \left[
(1-\alpha) \conv \left(f_{2}+\frac{1}{\lambda}\phi\right)\left(\frac{\cdot}{1-\alpha}\right)\right]-\frac{1}{\lambda}\phi
\\
&\geq \left[\alpha \left(\conv f_{1}+\frac{1}{\lambda}\phi\right)\left(\frac{\cdot}{\alpha}\right)\right]\Box \left[
(1-\alpha)\left(\conv f_{2}+\frac{1}{\lambda}\phi\right)\left(\frac{\cdot}{1-\alpha}\right)\right]-\frac{1}{\lambda}\phi
\\
&\geq \left[\alpha \conv f_{1}\left(\frac{\cdot}{\alpha}\right)\right]\Box \left[
 (1-\alpha)\conv f_{2}\left(\frac{\cdot}{1-\alpha}\right)\right]+
\left[\alpha \frac{1}{\lambda}\phi\left(\frac{\cdot}{\alpha}\right)\right]\Box \left[
(1-\alpha)\frac{1}{\lambda}\phi\left(\frac{\cdot}{1-\alpha}\right)\right]
-\frac{1}{\lambda}\phi
\\
&=\left[\alpha \conv f_{1}\left(\frac{\cdot}{\alpha}\right)\right]\Box \left[
(1-\alpha)\conv f_{2}\left(\frac{\cdot}{1-\alpha}\right)\right].
\end{align*}

\ref{i:arithm}: For every $x\in\dom\phi$, we have $\averagef(x)$
\begin{align*}
 & \leq \alpha \conv \left(f_{1}+\frac{1}{\lambda}\phi\right)\left(\frac{\alpha x}{\alpha}\right)+
(1-\alpha) \conv \left(f_{2}+\frac{1}{\lambda}\phi\right)\left(\frac{(1-\alpha)x}{1-\alpha}\right)
-\frac{1}{\lambda}\phi
(x)\\
&\leq \alpha \left(f_{1}+\frac{1}{\lambda}\phi\right)\left(\frac{\alpha x}{\alpha}\right)+
(1-\alpha)\left(f_{2}+\frac{1}{\lambda}\phi\right)\left(\frac{(1-\alpha)x}{1-\alpha}\right)-\frac{1}{\lambda}\phi(x)\\
&=\alpha f_{1}(x)+\alpha\frac{1}{\lambda}\phi(x)+(1-\alpha)f_{2}(x)+(1-\alpha)\frac{1}{\lambda}\phi(x)-\frac{1}{\lambda}\phi(x)
=\alpha f_{1}(x)+(1-\alpha)f_{2}(x).
\end{align*}
\end{proof}

\begin{theorem}[epi-continuity II of Bregman proximal average]
Let {\bfseries A1--A5} hold. Define $\tilde{f}_{i}
:=f_i+\iota_{\dom\phi}$ for $i=1, 2$.
Then the following hold:
\begin{enumerate}
\item\label{i:m}
 For every $x\in\RR^n$, the function $\lambda\mapsto\averagef(x)$ is monotonically
decreasing on $]0,\bl [$.

\item\label{i:infinity}
$\lim_{\lambda\uparrow\bl}\averagef=
\left[\alpha\timess \conv \left(f_{1}+\frac{1}{\bl}\phi\right)\right]\Box \left[
(1-\alpha) \timess \conv \left(f_{2}+\frac{1}{\bl}\phi\right)\right]-\frac{1}{\bl}\phi$
pointwise.
In particular, for $\bl=+\infty$ one has
 $\lim_{\lambda\uparrow\infty}\averagef=\left[
\alpha\timess\conv\tilde{f}_{1}\right]\square\left[
(1-\alpha)\timess\conv\tilde{f}_{2}\right]$ pointwise;
consequently, $\averagef\epi\closu\left[
(\alpha\timess\conv\tilde{f}_{1})\square (
(1-\alpha)\timess\conv\tilde{f}_{2})
\right]$
as $\lambda\uparrow\infty$.

\item\label{i:zero:g}
$\lim_{\lambda\downarrow 0}\averagef=
\alpha{f}_{1}+
(1-\alpha){f}_{2}$ pointwise on $U$. Consequently, when $\dom f_{i}\subseteq \IDD$ for $i=1,2$,
$\averagef\epi
\alpha{f}_{1}+
(1-\alpha){f}_{2}$
as $\lambda\downarrow 0$.
\end{enumerate}
\end{theorem}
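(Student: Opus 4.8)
The plan is to prove the monotonicity \ref{i:m} directly from the infimal-convolution representation of Theorem~\ref{complete}\ref{i:p:exp}, and then to obtain \ref{i:infinity} and \ref{i:zero:g} from it together with the squeeze bounds of Theorem~\ref{t:squeeze} and the envelope identity of Theorem~\ref{complete}\ref{i:env:sum}. For \ref{i:m}, I would unfold Theorem~\ref{complete}\ref{i:p:exp} completely: writing each $\conv(f_i+\frac{1}{\lambda}\phi)$ as the Carath\'eodory infimum over convex combinations and the infimal convolution as an infimum over splittings, one gets, for every $x\in\RR^n$,
$$
\averagef(x)=\inf\Big(\alpha\sum_k\mu_k f_1(w_k)+(1-\alpha)\sum_l\nu_l f_2(z_l)+\tfrac1\lambda\Delta\Big),
$$
the infimum running over all finite convex combinations with $\alpha\sum_k\mu_k w_k+(1-\alpha)\sum_l\nu_l z_l=x$, where $\Delta:=\alpha\sum_k\mu_k\phi(w_k)+(1-\alpha)\sum_l\nu_l\phi(z_l)-\phi(x)$. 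The key observation is that $x$ is itself a convex combination of the $w_k$'s and $z_l$'s, with weights $\alpha\mu_k$ and $(1-\alpha)\nu_l$, so Jensen's inequality for the convex function $\phi$ gives $\Delta\ge 0$. Hence for each fixed family of convex combinations the bracketed quantity is nonincreasing in $\lambda$, and taking the infimum preserves this, so $\lambda\mapsto\averagef(x)$ is nonincreasing on $]0,\bl[$. (For $\alpha\in\{0,1\}$ the same computation applies with one convex combination trivial, i.e.\ it reduces to monotonicity of $\lhul{\phi}{f_i}{\lambda}$ via Theorem~\ref{complete}\ref{i:prox:h}.)

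For \ref{i:infinity}, monotonicity gives $\lim_{\lambda\uparrow\bl}\averagef(x)=\inf_{0<\lambda<\bl}\averagef(x)$, and since iterated infima commute I would move $\inf_{0<\lambda<\bl}$ inside the infimum over convex combinations. For a fixed family, $A+\frac1\lambda\Delta$ is nonincreasing in $\lambda$ with $\Delta\ge 0$, so $\inf_{0<\lambda<\bl}(A+\frac1\lambda\Delta)=A+\frac1{\bl}\Delta$ --- reading $\frac1{\bl}\Delta=0$ when $\bl=+\infty$ and $\Delta<+\infty$, which is the only relevant case, while if $\Delta=+\infty$ both sides equal $+\infty$ compatibly with the $+\infty-(+\infty)=+\infty$ convention of \eqref{d:pave}. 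Re-contracting the convex combinations then produces exactly $[\alpha\timess\conv(f_1+\frac1{\bl}\phi)]\Box[(1-\alpha)\timess\conv(f_2+\frac1{\bl}\phi)](x)-\frac1{\bl}\phi(x)$, and for $\bl=+\infty$ this equals $[\alpha\timess\conv\tilde f_1]\Box[(1-\alpha)\timess\conv\tilde f_2](x)$ since $f_i+\iota_{\dom\phi}=\tilde f_i$. The epi-convergence statement then follows from the standard behaviour of epi-limits of monotone sequences (see, e.g., \cite[Proposition~7.4]{Rock98}): by \ref{i:m} and Theorem~\ref{complete}\ref{i:lower}, along any $\lambda_k\uparrow+\infty$ the functions $\averagef$ form a nonincreasing sequence of lower semicontinuous functions, whose epi-limit is therefore the lower semicontinuous hull of their pointwise limit.

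For \ref{i:zero:g}, the upper bound $\averagef\le\alpha f_1+(1-\alpha)f_2$ on $U$ is Theorem~\ref{t:squeeze}\ref{i:arithm} (recall $U\subseteq\dom\phi$). For the matching lower bound, observe that $\lenv{\phi}{g}{\lambda}\le g$ on $U$ for every $g$ (because $D_\phi(y,y)=0$), so Theorem~\ref{complete}\ref{i:env:sum} yields $\averagef\ge\lenv{\phi}{\averagef}{\lambda}=\alpha\lenv{\phi}{f_1}{\lambda}+(1-\alpha)\lenv{\phi}{f_2}{\lambda}$ on $U$; letting $\lambda\downarrow 0$ and invoking Proposition~\ref{i:c} (so $\lenv{\phi}{f_i}{\lambda}(x)\to f_i(x)$ for $x\in U$) gives $\liminf_{\lambda\downarrow 0}\averagef(x)\ge\alpha f_1(x)+(1-\alpha)f_2(x)$. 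Combining the two bounds proves the pointwise limit on $U$. Finally, if $\dom f_i\subseteq U$ then $\dom\averagef\subseteq U$ by Theorem~\ref{complete}\ref{i:f:dom}, so $\sup_{0<\lambda<\bl}\averagef=\alpha f_1+(1-\alpha)f_2$ on all of $\RR^n$; this function is lower semicontinuous and, by \ref{i:m}, $\averagef$ is nondecreasing as $\lambda\downarrow 0$, so \cite[Proposition~7.4]{Rock98} gives $\averagef\epi\alpha f_1+(1-\alpha)f_2$ as $\lambda\downarrow 0$.

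The step I expect to be most delicate is the bookkeeping in \ref{i:infinity}: handling the conventions $f_i+\frac1{\bl}\phi:=\tilde f_i$ and $\frac1{\bl}\Delta=0$ when $\bl=+\infty$, justifying that the infimum over $\lambda$ may be pulled inside the infimum over convex combinations, and checking that the degenerate choices with $\Delta=+\infty$ (some convex-combination point outside $\dom\phi$) are treated consistently on both sides of the identity.
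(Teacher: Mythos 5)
Your proposal is correct and follows essentially the same route as the paper: part \ref{i:m} and \ref{i:infinity} via the explicit unfolding of the infimal convolution and convex hulls into an infimum over convex combinations with a Jensen-nonnegative remainder $\Delta$, the interchange of infima as $\lambda\uparrow\bl$, and part \ref{i:zero:g} via the squeeze between $\alpha f_1+(1-\alpha)f_2$ and $\alpha\lenv{\phi}{f_1}{\lambda}+(1-\alpha)\lenv{\phi}{f_2}{\lambda}$ together with Proposition~\ref{i:c}, with the epi-convergence claims in both parts obtained from the monotone-convergence results in \cite[Proposition~7.4]{Rock98}. Your explicit bookkeeping of the $\bl=+\infty$ and $\Delta=+\infty$ conventions is in fact slightly more careful than the paper's terse remark that "the above arguments also apply for $\bl=+\infty$."
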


\begin{proof} We have $\averagef(x)=$
{\footnotesize
\begin{align}
&\inf\limits_{u+v=x}\bigg[\alpha\inf_{\sum_{i}\alpha_{i}x_{i}=\frac{u}{\alpha}
\atop{\sum_{i}\alpha_{i}=1,\alpha_i\geq0}}\sum_{i}\alpha_{i}\bigg(f_{1}(x_{i})+\frac{1}{\lambda}\phi(x_{i})\bigg)
+(1-\alpha)\inf_{\sum_{j}\beta_{j}y_{j}=\frac{v}{1-\alpha}\atop{\sum_{j}\beta_{j}=1,\beta_{j}\geq 0}}\sum_{j}\beta_{j}\bigg(f_{2}(y_{j})+\frac{1}{\lambda}\phi(y_{j})\bigg)\bigg]\nonumber\\
&\quad -\frac{1}{\lambda}\phi(x)\nonumber\\
&=\inf_{{\alpha\sum_i\alpha_{i}x_{i}+(1-\alpha)\sum_{j}\beta_{j}y_{j}=x}\atop{\sum_{i}\alpha_{i}=1,\sum_{j}\beta_{j}=1},
\alpha_{i}\geq 0, \beta_{j}\geq 0}\bigg[\alpha\sum_{i}\alpha_{i}f_{1}(x_{i})+(1-\alpha)\sum_{j}\beta_{j}f_{2}(y_{j})+
\nonumber\\
& \quad \frac{1}{\lambda}\underbrace{\bigg(\alpha\sum_{i}\alpha_{i}\phi(x_{i})+
(1-\alpha)\sum_{j}\beta_{j}\phi(y_{j})-
\phi(\alpha\sum_{i}\alpha_{i}x_{i}+(1-\alpha)\sum_{j}\beta_{j}y_{j})\bigg)}\bigg].\label{e:mono:l}
\end{align}}
\normalsize
The underbraced part is nonnegative because $\phi$ is convex,
$\sum_{i}\alpha_{i}=1,\sum_{j}\beta_{j}=1$, $\alpha_{i}, \beta_{j}\geq 0$.

\ref{i:m}: By \eqref{e:mono:l}, $\averagef$ is monotonically decreasing with respect to $\lambda$
 on $]0,\bl[$.

\ref{i:infinity}: From \ref{i:m} we obtain
$\lim_{\lambda\uparrow\bl}\averagef(x)=\inf_{\bl>\lambda>0}\averagef(x)=$
{\small
\begin{align}
&\inf_{\bl>\lambda>0}
\inf_{{\alpha\sum_i\alpha_{i}x_{i}+(1-\alpha)\sum_{j}\beta_{j}y_{j}=x}\atop{\sum_{i}\alpha_{i}=1,\sum_{j}
\beta_{j}=1},\alpha_{i}\geq 0, \beta_{j}\geq 0}\bigg[\alpha\sum_{i}\alpha_{i}f_{1}(x_{i})+(1-\alpha)\sum_{j}\beta_{j}f_{2}(y_{j})+\label{e:k1}\\
&\quad \frac{1}{\lambda}\bigg(\alpha\sum_{i}\alpha_{i}\phi(x_{i})+
(1-\alpha)\sum_{j}\beta_{j}\phi(y_{j})
-\phi(\alpha\sum_{i}\alpha_{i}x_{i}+(1-\alpha)\sum_{j}\beta_{j}y_{j})\bigg)\bigg]\nonumber\\
&=\inf_{{\alpha\sum_i\alpha_{i}x_{i}+(1-\alpha)\sum_{j}\beta_{j}y_{j}=x}\atop{\sum_{i}\alpha_{i}=1,\sum_{j}\beta_{j}=1},
\alpha_{i}\geq 0, \beta_{j}\geq 0}\inf_{\bl>\lambda>0}
\bigg[\alpha\sum_{i}\alpha_{i}f_{1}(x_{i})+(1-\alpha)\sum_{j}\beta_{j}f_{2}(y_{j})+\\
&\quad \frac{1}{\lambda}\bigg(\alpha\sum_{i}\alpha_{i}\phi(x_{i})+(1-\alpha)\sum_{j}\beta_{j}\phi(y_{j})
-
\phi(\alpha\sum_{i}\alpha_{i}x_{i}+(1-\alpha)\sum_{j}\beta_{j}y_{j})\bigg)\bigg]\nonumber\\
&=\inf_{{\alpha\sum_i\alpha_{i}x_{i}+(1-\alpha)\sum_{j}\beta_{j}y_{j}=x}\atop{\sum_{i}\alpha_{i}=1,\sum_{j}\beta_{j}=1},\alpha_{i}\geq 0, \beta_{j}\geq 0}\bigg[\alpha\sum_{i}\alpha_{i}f_{1}(x_{i})+(1-\alpha)\sum_{j}\beta_{j}f_{2}(y_{j})+\label{e:k2}\\
&\quad \frac{1}{\bl}\bigg(\alpha\sum_{i}\alpha_{i}\phi(x_{i})+(1-\alpha)\sum_{j}\beta_{j}\phi(y_{j})
-\phi(\alpha\sum_{i}\alpha_{i}x_{i}+(1-\alpha)\sum_{j}\beta_{j}y_{j})\bigg)\bigg]\nonumber\\
&=\left[\alpha\conv\bigg(f_{1}+\frac{1}{\bl}\phi\bigg)\bigg(\frac{\cdot}{\alpha}\bigg)\Box (1-\alpha)\conv\bigg(f_{2}+\frac{1}{\bl}\phi\bigg)\bigg(\frac{\cdot}{1-\alpha}\bigg)\right](x)
-\frac{1}{\bl}\phi(x)\nonumber.
\end{align}}
\normalsize
The above arguments also apply for $\bl=+\infty$.
The epi-convergence follows from \cite[Proposition 7.4(c)]{Rock98}.

\ref{i:zero:g}: By Theorem~\ref{complete}\ref{i:env:sum}, Proposition~\ref{p:phi:hull}\ref{i:h:e}
 and Theorem~\ref{t:squeeze},
on $U$ we have
$$\alpha f_{1}+(1-\alpha) f_{2}\geq \averagef\geq \alpha \lenv{\phi}{f_{1}}{\lambda}+(1-\alpha) \lenv{\phi}{f_{2}}{\lambda}.$$
The result follows by sending $\lambda$ to $0$ and applying Proposition~\ref{i:c}.

When $\dom f_{i}\subseteq\IDD$ for $i=1, 2$, we have
$\dom\averagef\subseteq \IDD$ by Theorem~\ref{complete}\ref{i:f:dom}.
Then
$\lim_{\lambda\downarrow 0}\averagef=
\alpha {f}_{1}+
(1-\alpha) {f}_{2}$ on $\RR^n$.
Because $\averagef$ is increasing as $\lambda\downarrow 0$, the $\epi$ follows from
\cite[Theorem 7.4(d)]{Rock98}.
\end{proof}

\section*{Acknowledgments}
The authors thank the editor M.\ Teboulle and two anonymous reviewers for helpful suggestions and
constructive feedback.
Xianfu Wang and Heinz Bauschke were partially supported by the Natural Sciences and
Engineering Research Council of Canada.

\small

\end{document}